\theoremstyle{plain}
\newtheorem{theorem}{Theorem}[section]
\newtheorem{lemma}[theorem]{Lemma}
\newtheorem{proposition}[theorem]{Proposition}
\newtheorem{corollary}[theorem]{Corollary}
\theoremstyle{definition}
\newtheorem{definition}[theorem]{Definition}
\newtheorem{example}[theorem]{Example}
\theoremstyle{remark}
\newtheorem{remark}[theorem]{Remark}
\numberwithin{equation}{section}
\newcommand{\R}{{\mathbb R}}
\newcommand{\N}{{\mathbb N}}
\newcommand{\Z}{{\mathbb Z}}
\newcommand{\Lp}{\mathcal{L}}
\newcommand{\lm}{\lambda}
\newcommand{\al}{\alpha}
\newcommand{\pt}{\partial_t}
\newcommand{\eps}{\epsilon}
\newcommand{\ka}{\kappa}
\newcommand{\lra}{\longrightarrow}
\newcommand{\F}{\mathcal{F}}
\newcommand{\D}{\mathcal{D}}
\newcommand{\Q}{\mathcal{Q}}
\newcommand{\vr}{\varrho}
\newcommand{\si}{\sigma}
\newcommand{\vph}{\varphi}
\newcommand{\kap}{\kappa}
\newcommand{\Deg}{{\mathrm{Deg}}}
\newcommand{\Hm}[1]{\leavevmode{\marginpar{\tiny%
$\hbox to 0mm{\hspace*{-0.5mm}$\leftarrow$\hss}%
\vcenter{\vrule depth 0.1mm height 0.1mm width \the\marginparwidth}%
\hbox to 0mm{\hss$\rightarrow$\hspace*{-0.5mm}}$\\\relax\raggedright
#1}}}
\providecommand{\eat}[1]{}
\begin{document}

\title[Stochastic completeness]{Stochastic completeness of graphs: bounded Laplacians, intrinsic metrics, volume growth and curvature}
\author{Rados{\l}aw K. Wojciechowski}
\address{Graduate Center of the City University of New York, 365 Fifth Avenue, New York, NY, 10016.
}
\address{York College of the City University of New York, 94-20 Guy R. Brewer Blvd., Jamaica, NY 11451.}
\email{rwojciechowski@gc.cuny.edu}
\subjclass[2000]{Primary 39A12; Secondary 05C63, 58J35}
\date{\today}
\thanks{The author gratefully acknowledges financial support from PSC-CUNY Awards, jointly funded by the Professional Staff Congress and the City University of New York, and the Collaboration Grant for Mathematicians, funded by the Simons Foundation.}

\begin{abstract}
The goal of this article is to survey various results concerning stochastic completeness of graphs. In particular,
we present a variety of formulations of stochastic completeness and 
discuss how a discrepancy between uniqueness class and volume growth criteria 
in the continuous and discrete settings
was ultimately resolved via the use of intrinsic metrics. 
Along the way, we discuss some equivalent notions of boundedness in the sense of geometry
and of analysis.
We also discuss various curvature criteria
for stochastic completeness and discuss how weakly spherically symmetric graphs
establish the sharpness of results.
\end{abstract}

\maketitle
\tableofcontents

\section{Introduction}
The goal of this survey paper is to give an overview of results for the uniqueness of bounded solutions
of the heat equation with continuous time parameter, aka stochastic completeness, on infinite weighted graphs.
We first discuss some equivalent formulations of stochastic completeness and how
we have to go beyond the realm of bounded operators on graphs
in order for this property to be of interest.  
Furthermore, we discuss how the combinatorial graph distance is not an appropriate choice
of metric for the purpose of finding results analogous to those found in the setting of Riemannian manifolds.
This leads to the use of so-called intrinsic metrics which give an appropriate notion of
volume growth. Finally, we discuss how some recently developed
versions of curvature on graphs can be used to give conditions for stochastic completeness.

\subsection{Stochastic completeness, uniqueness class and volume growth}
In 1986, Alexander Grigor$\cprime$yan published an optimal volume growth condition
for the stochastic completeness of a geodesically complete Riemannian manifold. More specifically, if
a geodesically complete manifold $M$ satisfies
$$\int^\infty \frac{r}{\log V(r)}dr =\infty$$
where $V(r)$ denotes the volume of a ball of radius $r$ with an arbitrary center, then 
$M$ is stochastically complete \cite{Gri86}. In particular, we note
that any Riemannian manifold with volume growth satisfying $V(r) \leq Ce^{r^2}$ for $C>0$
will be stochastically complete. This result improved all other known volume growth
criteria at the time \cite{Gaf59, KLi} and is sharp in the sense that there exist stochastically incomplete manifolds
with volume growth of order $e^{r^{2+\eps}}$ for any $\eps>0$. In particular, the class of model manifolds
already provides such examples, see the survey article of Grigor$'$yan for this and 
many other results \cite{Gri99}.

Grigor$'$yan's volume growth criterion was proven via a uniqueness class result for solutions
of the heat equation. More specifically, if $u$ is a solution of the heat equation on $M \times (0,T)$
with initial condition $0$ and for all $r$ large $u$ satisfies
$$\int_0^T \int_{B_r} u^2(x,t)\  d\mu \ dt \leq e^{f(r)}$$
where $f$ is a monotone increasing function on $(0,\infty)$ which satisfies
$$\int^\infty \frac{r}{f(r)}dr=\infty,$$
then $u=0$ on $M \times (0,T)$.
It follows that bounded solutions of the heat equation are unique by taking the difference
of two solutions and letting $f(r)=\log(C^2TV(r))$ where $C$ is a bound on the solutions.
See \cite{Gaf59, KLi, Dav92, Tak89} for other techniques for proving volume
growth criteria for stochastic completeness in the manifold setting.

In the setting of graphs, an explicit study of geometric conditions for the 
uniqueness of bounded solutions of the heat equation
with continuous time parameter can be found in
work of J{\'o}zef Dodziuk and Varghese Mathai \cite{DM06} as well as that of Dodziuk \cite{Dod06}
and subsequently taken up in the author's Ph.D. thesis \cite{Woj08} and independently
in work of Andreas Weber 
\cite{Web10}. In particular,
Dodziuk/Mathai show that whenever the Laplacian on a graph with standard weights is a bounded
operator, then the graph is stochastically complete. Dodziuk then extends this result
to allow weights on edges. The technique used to establish these results is that 
of a minimum principle for the heat equation.

In the thesis \cite{Woj08} and follow-up paper \cite{Woj09}
we rather use an equivalent formulation of stochastic completeness in terms
of bounded $\lm$-harmonic functions to derive criteria for stochastic completeness which allow for
unbounded operators. Furthermore, we give a full characterization of stochastic completeness
in the case of trees which enjoy a certain symmetry. This characterization
already shows a disparity between the graph and manifold
settings in that there  exist stochastically incomplete graphs with factorial volume growth, that is, 
if $V(r)$ denotes the  
number of vertices within $r$ steps of a center vertex, then the tree is stochastically
incomplete and $V(r)$ grows factorially with $r$.
However, a more striking disparity appeared in a subsequent paper 
which introduced a class of graphs called anti-trees. 
These graphs can be stochastically incomplete and have polynomial volume growth \cite{Woj11}. 
In particular, there exist stochastically incomplete anti-trees with volume growth like 
$$V(r)\sim r^{3+\eps}$$
for any $\eps>0$. This provides a very strong contrast with the borderline for the manifold case
given by Grigor$'$yan's result.

The volume growth in these examples involves taking balls via the usual combinatorial graph metric, that is,
taking the least number of edges in a path connecting two vertices. This notion reflects only the global connectedness properties of the graph. However, it is natural to expect that a metric should
also reflect the local geometry of a graph, i.e., the valence or degree of vertices. Furthermore, if the
graph has weights on both edges and a measure on vertices, then the metric should 
interact with both the edge weights and the vertex measure.
For Riemannian manifolds, there exists the notion of an intrinsic metric which naturally arises
from the energy form as well as from
the geometry of the manifold. This notion of an intrinsic metric for the energy form was then extended to strongly 
local Dirichlet forms by Karl-Theodor Sturm \cite{Stu94}. Now, graphs which have both a weight on 
edges and a measure on vertices can be put into a one-to-one correspondence
with regular Dirichlet forms on discrete measure spaces as discussed in work of Matthias Keller and Daniel Lenz \cite{KL12}. 
However, the Dirichlet forms that arise from graphs are not strongly local. Thus, the notion
of an intrinsic metric from strongly local Dirichlet froms has to be extended to the non-local setting. 
This was done systematically in a paper by Rupert Frank, Daniel Lenz and Daniel Wingert \cite{FLW14}. 

The notion of intrinsic metrics for non-local Dirichlet forms was quickly put to use by the graph theory community. 
A first example of a concrete intrinsic metric for weighted graphs already appers in the 
Ph.D. thesis of Xueping Huang \cite{Hua11b} and can also be found in the work of Matthew Folz 
on heat kernel estimates around the same time \cite{Fol11}.
However, even given the tool of intrinsic metrics, there are still difficulties in proving an analogue
to Grigor$'$yan's criterion for graphs. In particular, Huang gives an example of a graph for which there
exists a non-zero bounded solution of the heat equation $u$ with zero initial condition which satisfies
$$\int_0^T \int_{B_r} u^2(x,t)\  d\mu \ dt \leq e^{f(r)}$$
for $f(r)=C r \log r$ for some constant $C$, see \cite{Hua11b, Hua12}. 
Hence, as $f$ in this case clearly satisfies $\int^\infty {r}/{f(r)} \ dr=\infty$, 
we see that even when using intrinsic metrics, a direct analogue
to Grigor$'$yan's proof is not possible for all graphs. 

A recent breakthrough in resolving this issue can be found in the work of Xueping Huang, Matthias Keller
and Marcel Schmidt \cite{HKS}. In this paper, the authors first prove a uniqueness class result
which is valid for a certain class of graphs called globally local. They can then
reduce the study of stochastic completeness of general graphs to that of globally local graphs using
the technique of refinements first found in \cite{HS14}. With these two results, they are able to establish
an exact analogue to the volume growth criterion of Grigor$'$yan which is valid for all graphs. That is, letting
$V_\varrho(r)$ denote the measure of a ball with respect to an intrinsic metric and letting 
$\log^{\#}(x)=\max\{\log(x), 1\}$ if
$$\int^\infty \frac{r}{\log^{\#} V_\varrho(r)} dr = \infty,$$
then the graph is stochastically complete. We note that taking the minimum with 1 
is only necessary to cover the case of when the entire graph has small measure;
the actual value of the constant 1 is not relevant.

Let us mention that the volume growth criterion for stochastic completeness of graphs involving
intrinsic metrics was first proven under 
some additional assumptions by Folz \cite{Fol14}. 
The proof technique of Folz, however, is different from that of Grigor$'$yan.
More specifically, Folz bypasses Grigor$'$yan's uniqueness class technique via a
probabilistic approach involving synchronizing the random walk on the graph with a random walk
on an associated quantum graph and then applying a generalization of Grigor$'$yan's result for manifolds
to strongly local Dirichlet forms found in work of Sturm \cite{Stu94}. A similar proof involving quantum graphs
but using analytic techniques can also be found in a paper by Huang \cite{Hua14b}.

We would also like to highlight earlier work focused on a volume growth criterion 
by Alexander Grigor$'$yan, Xueping Huang and Jun Masamune \cite{GHM12} 
using a technique from \cite{Dav92}. While this did not yield the optimal volume growth condition when using
intrinsic metrics, it did yield an optimal volume growth condition for the combinatorial graph metric in that 
$$V(r) \leq Cr^3$$
implies stochastic completeness where $V(r)$ is the volume defined with respect to the combinatorial graph metric.
Thus, we see that the anti-tree examples found in \cite{Woj11} are the smallest stochastically incomplete
graphs in the combinatorial graph distance.

\subsection{Curvature and stochastic completeness}
Let us now turn to curvature. For Riemannian manifolds, in a paper from 1974, 
Robert Azencott gave both a curvature criterion for stochastic completeness and the first examples of stochastically incomplete manifolds \cite{Az74}.
In Azencott's example, the curvature decays to negative infinity rapidly, thus it is natural to expect that
lower curvature bounds are necessary for stochastic completeness.
An optimal result in this direction involving Ricci curvature
was established by Nicholas Varopoulos \cite{Var83} and Pei Hsu
\cite{Hsu89}. It can be formulated as follows: let $M$ be a geodesically complete Riemannian manifold
and suppose that $\ka$ is a positive increasing continuous function on $(0,\infty)$ such that for all points
away from the cut locus on the sphere of radius $r$ we have $\mathrm{Ric}(x)\geq -C \ka^2(r)$ for all $r$ large
and $C>0$. 
If
$$\int^\infty \frac{1}{\ka(r)}dr=\infty,$$
then $M$ is stochastically complete. This improved the previously known results which gave that Ricci curvature
uniformly bounded from below implied stochastic completeness as proven by Shing-Tung Yau \cite{Yau78}, 
see also the work of Dodziuk \cite{Dod83}. However, due to the connection between
Ricci curvature and volume growth, this result is already implied by Grigor$'$yan's volume growth result.
There is also a number of comparison results for stochastic completeness involving curvature, see
\cite{Ichi82} or Section 15 in the survey of Grigor$'$yan \cite{Gri99}.

In recent years, there has been a tremendous interest in notions of curvature
on graphs. We focus here on two formulation. One definition of curvature 
originates in work of Dominique Bakry
and Michele \'{E}mery on hypercontractive semigroups \cite{BE85}. Thus, we refer to it as Bakry--\'{E}mery curvature.
A second formulation comes from the work of Yann Ollivier on Markov chains on metric spaces in \cite{Oll07, Oll09}.
This was later modified to give an infinitesimal version by Yong Lin, Linyuan Lu and Shing-Tung Yau \cite{LLY11}
and then extended to the case of possibly unbounded operators on graphs by Florentin M{\"u}nch and the author \cite{MW19}. In any case, we refer to this as Ollivier Ricci curvature.
For Bakry--\'{E}mery curvature, Bobo Hua and Yong Lin proved that a uniform lower bound implies stochastic completeness
in \cite{HL17}.
On the other hand, in \cite{MW19} we prove that for Ollivier Ricci curvature, if
$$\ka(r) \geq -C\log r$$
for $C>0$ and all large $r$ where $\ka(r)$ denotes the spherical curvature on a sphere
of radius $r$, then the graph is stochastically complete.
This is optimal in the sense that for any $\eps>0$ there exist stochastically incomplete
graphs with $\ka(r)$ decaying like $-(\log r)^{1+\eps}$. Thus, there is still a disparity in this condition for graphs and
for the Ricci curvature condition for manifolds as presented above. However, this disparity cannot
be resolved by using the notion of intrinsic metrics as was the case for stochastic completeness and volume growth.
 
\subsection{The structure of this paper}
We now briefly discuss the structure of this paper. Although we do not give full proofs of results,
we also do not assume any particular background of the reader and thus try to make the presentation
as self-contained as possible in terms of concepts and definitions. We also give specific references
for all results that we do not prove completely.

In Section~\ref{s:heat} we introduce the setting of weighted graphs and discuss the heat
equation. In particular, we outline an elementary construction of bounded solutions of the 
heat equation using exhaustion techniques. In Section~\ref{s:sc} we present some equivalent formulations for 
stochastic completeness. In particular, stochastic completeness is equivalent to the uniqueness of this bounded solution
of the heat equation. In Section~\ref{s:bounded} we discuss boundedness of the Laplacian and
how boundedness is related to stochastic completeness. In Section~\ref{s:symmetry} we then introduce
the class of weakly spherically symmetric graphs and present the examples of anti-trees which show the
disparity between the continuous and discrete settings in the case of the combinatorial graph metric.
In Section~\ref{s:intrinsic} we introduce intrinsic metrics and discuss how they can differ from the
combinatorial graph metric and how this is related to stochastic completeness. 
Finally, in Sections~\ref{s:volume}~and~\ref{s:curvature} we present the criteria
for stochastic completeness in terms of volume growth and curvature mentioned above.

We also mention here a recent survey article by Bobo Hua and Xueping Huang which has some contact
points with our article but also discusses heat kernel estimates, ancient solutions of the heat
equation and upper escape rates \cite{HH}.

\section{The heat equation on graphs}\label{s:heat}
\subsection{Weighted graphs}
We start by introducing our setting following \cite{KL12}. We note that this setting is very general
in that we allow for arbitrary weights on both edges and vertices. We also do not assume local finiteness,
i.e., that every vertex has only finitely many edges coming out of the vertex.

\begin{definition}[Weighted graphs] 
We let $X$ be a countably infinite set whose elements we refer to as 
\emph{vertices}. We then let $m:X \lra (0,\infty)$ denote a \emph{measure} on the vertex set which can
be extended to all subsets by additivity. Finally, we let $b:X \times X \lra [0,\infty)$ denote a function
called the \emph{edge weight} which satisfies
\begin{enumerate}
\item[(b1)] \quad $b(x,x)=0$ for all $x \in X$
\item[(b2)] \quad $b(x,y)=b(y,x)$ for all $x, y \in X$
\item[(b3)] \quad $\sum_{y \in X} b(x,y) <\infty$ for all $x \in X$.
\end{enumerate}
Whenever $b(x,y)>0$, we think of the vertices $x$ and $y$ as being \emph{connected} by an edge with 
weight $b(x,y)$, call $x$ and $y$ \emph{neighbors} and write $x \sim y$. 
Thus, (b1) gives that there are no loops,
(b2) that edge weights are symmetric and (b3) that the total sum of the edge weights is finite.
We call the triple $G=(X, b, m)$ a \emph{weighted graph} or just \emph{graph} for short.
\end{definition}

We note, in particular, that condition (b3) above allows for a vertex to have infinitely many neighbors.
Whenever, each vertex has only finitely neighbors, we call the graph \emph{locally finite}. We call the quantity
$$\Deg(x)= \frac{1}{m(x)}\sum_{y \in X} b(x,y)$$
the \emph{weighted vertex degree} of $x \in X$ or just \emph{degree} for short. We will see
that this function plays a significant role in what follows.

\begin{example}
We now present some standard choices for $b$ and $m$ to help orient the reader.
In particular, we discuss the case of standard edge weights, counting and degree measures.
\begin{enumerate}
\item Whenever $b(x,y) \in \{0,1\}$ for all $x, y \in X$, we say that the graph has \emph{standard edge weights}.
In this case, it is clear that condition (b3) in the definition of the edge weights 
implies that the graph must be locally finite.
\item One choice of vertex measure is the counting measure, that is, $m(x)=1$
for all $x \in X$. In this case, 
$m(K)=\# K$ is just the cardinality of any finite subset $K$. In the case of standard edge weights
and counting measure, we then
obtain
$$\Deg(x) =\# \{y \ | \ y \sim x\}$$
so that the weighted vertex degree is just the number of neighbors of $x$, that is, the valence
or degree of a vertex.
\item Another choice for the vertex measure is 
$$m(x) = \sum_{y \in X}b(x,y)$$
for $x \in X$. 
In the case of standard edge weights, 
it then follows that $m(x) = \# \{ y \ | \ y \sim x \}$ is the number of neighbors of $x$.
In any case, with this choice of measure, it is clear that
$$\Deg(x)=1$$ 
for all $x \in X$. 
\end{enumerate}
\end{example}

Often we will assume that graphs are \emph{connected} in the usual geometric sense, namely,
for any two vertices $x, y \in X$, there exists a sequence of vertices $(x_k)_{k=0}^n$ with $x_0=x$,
$x_n =y$ and $x_k \sim x_{k+1}$ for $k=0, 1, \ldots n-1$. We note that we include the case of $x=y$
when a vertex can be connected to itself via a path consisting of a single vertex and thus no edges.
Such a sequence is called a \emph{path}
connecting $x$ and $y$. We then let 
$$d:X \times X \lra [0,\infty)$$
denote the \emph{combinatorial graph distance} on $X$,
that is, $d(x,y)$ equals the least number of edges in a path connecting $x$ and $y$.
We note that this metric only considers the combinatorial properties of the graph encoded in $b$ but not
the actual value of $b(x,y)$ nor the vertex measure $m$. We will have more to say about this later.

\subsection{Laplacians and forms}
We now denote the set of all functions on $X$ by $C(X)$, that is,
$$C(X) = \{ f: X \lra \R\}$$
and the subset of finitely supported functions by $C_c(X)$.  The Hilbert space
that we will be interested in at various points is $\ell^2(X,m)$, the space of square summable functions
on $X$ with respect to the measure $m$. That is,
$$\ell^2(X,m) = \{ f \in C(X) \ | \ \sum_{x \in X}f^2(x)m(x)<\infty\}$$
with inner product
$\langle f, g \rangle = \sum_{x \in X}f(x)g(x)m(x)$. 

In order to introduce a formal Laplacian, we first have to restrict to a certain class of functions
as we do not assume local finiteness so that summability becomes an issue. 
\begin{definition}[Formal Laplacian and energy form]
We let
$$\F=\{ f\in C(X) \ | \ \sum_{y \in X} b(x,y) |f(y)| <\infty \textup{ for all } x \in X \}$$
and for $f \in \F$, we let 
$$\Lp f(x) = \frac{1}{m(x)} \sum_{y \in X} b(x,y) (f(x)-f(y))$$
for $x \in X$. The operator $\Lp$ is then called the \emph{formal Laplacian} associated to $G$. We furthermore let
$$\D=\{ f\in C(X) \ | \  \sum_{x, y \in X} b(x,y) (f(x)-f(y))^2 <\infty\}$$
denote the space of \emph{functions of finite energy}. For $f, g \in \D$, we let
$$\Q(f,g) = \frac{1}{2}\sum_{x,y \in X} b(x,y) (f(x)-f(y))(g(x)-g(y)) $$
denote the \emph{energy form} associated to $G$.
\end{definition}

We denote the restriction of $\Q$ to $C_c(X) \times C_c(X)$ by $Q_c$. It then follows that a 
version of Green's formula holds for $Q_c$:
$$Q_c(\vph,\psi) = \sum_{x \in X} \Lp \vph(x)\psi(x) m(x) = \sum_{x \in X} \vph(x) \Lp \psi(x) m(x) $$
for all $\vph, \psi \in C_c(X)\subseteq \ell^2(X,m)$, see, for example \cite{HK11}.
The form $Q_c$ is closable and thus there exists a 
unique self-adjoint operator $L$ with domain 
$D(L) \subseteq \ell^2(X,m)$ associated to the closure of $Q_c$ denoted by $Q$. 
For a discussion of the closure of a form and the construction of the 
associated operator in a general Hilbert space, see Theorem~5.37 in \cite{Weid80}.
We refer to $L$ as the \emph{Laplacian} associated to the graph $G$. We note that with our sign
convention, we have
$$\langle Lf, f \rangle = Q(f,f) \geq 0$$
for all $f \in D(L)$ so that $L$ is a positive operator.

\subsection{A word about essential self-adjointness and $\ell^2$ theory}
Although not a main concern of this article as we mostly deal with bounded solutions, 
we want to mention another approach to the construction of the Laplacian $L$.
In this viewpoint, one starts by restricting $\Lp$ to $C_c(X)$ and denoting the resulting operator by $L_c$,
that is, $D(L_c)=C_c(X)$ and $L_c$ acts as $\Lp$. 

However, due to the lack of local finiteness,
$\Lp$ does not necessarily map $C_c(X)$ into $\ell^2(X,m)$. Thus, whenever we want to consider $L_c$ as an operator on $\ell^2(X,m)$, we have to assume
that $\Lp$ maps $C_c(X)$ into $\ell^2(X,m)$.
Under this additional assumption, it is easy to see that $L_c$ is a symmetric operator on $\ell^2(X,m)$, i.e.,
$$\langle L_c \vph, \psi \rangle = \langle \vph, L_c \psi \rangle$$
for all $\vph, \psi \in C_c(X)$ and that the Green's formula reads as
$$Q_c(\vph,\psi) = \langle L_c \vph, \psi \rangle$$
for $\vph, \psi \in D(L_c)$. In this case, the self-adjoint operator associated to the closure of $Q_c$, which is just 
the Laplacian $L$,
is called the \emph{Friedrichs extension} of $L_c$, see
Theorem~5.38 in \cite{Weid80} for further details on the construction of this extension for general Hilbert
spaces. 

We note that $\Lp$ maps $C_c(X)$ into $\ell^2(X,m)$ whenever 
$\mathcal{L}1_x \in \ell^2(X,m)$ for all $x \in X$ where $1_x$
denotes the characteristic function of the singleton set $\{x\}$. It is a direct calculation that 
$\mathcal{L}1_x \in \ell^2(X,m)$ for all $x \in X$ if and only if 
$$\sum_{y \in X} \frac{b^2(x,y)}{m(y)} <\infty$$
for all $x \in X$. In particular, all locally finite graphs or, more generally, all graphs
with $\inf_{y \sim x} m(y)>0$ automatically satisfy this assumption.
Furthermore, the condition $\mathcal{L}1_x \in \ell^2(X,m)$ for all $x \in X$ is equivalent
to a variety of other conditions, for example, that $C_c(X) \subseteq D(L)$, for more details,
see \cite{Kel15}.

We further note that, in general, $L_c$ may have many self-adjoint extensions and
that processes associated to these different extensions may have different stochastic properties.
When $L_c$ has a unique self-adjoint extension, $L_c$ is called \emph{essentially self-adjoint}.
It was first shown as Theorem~1.3.1 in \cite{Woj08} that $L_c$ is essentially self-adjoint in the case 
of standard edge weights and counting measure. This was then extended to allow for general edge 
weights and any measure such that the measure of infinite paths is infinite as Theorem~6 in \cite{KL12}. 
This criterion was further improved and generalized in \cite{Gol14, Schm20} 
which consider more general operators on graphs.
For further discussion of essential self-adjointness, see \cite{HKLW12, HKMW13, Schm20} and reference therein.
We will also discuss the connection between essential self-adjointness and metric completeness in Subsection~\ref{ss:metric_esa} below.

\subsection{The heat equation: existence of solutions}
We now introduce a continuous time heat equation on $G$. We let $\ell^\infty(X)$ denote the set of bounded
functions on $X$, that is,
$$\ell^\infty(X) = \{ f \in C(X) \ | \ \sup_{x \in X} |f(x)| <\infty \}.$$
We now make precise the requirements
for summability, differentiability and boundedness of a solution.

\begin{definition}[Bounded solution of the heat equation]
Let $u_0 \in \ell^\infty(X).$
By a \emph{bounded solution
of the heat equation with initial condition} $u_0$ we mean a bounded function
$$u:X \times [0,\infty) \lra \R$$
such that $u(x,\cdot)$ is continuous for every $t\geq0$, differentiable
for $t>0$ and all $x \in X$ and
$$(\mathcal{L} + \pt)u(x,t) = 0$$
for all $x \in X$ and $t>0$ with $u(x,0)=u_0(x)$.
\end{definition}

We note, in particular, that as $u(\cdot, t) \in \ell^\infty(X)$ for every $t \geq 0$, we obtain
that $u(\cdot, t) \in \F$. Thus, we may apply the formal Laplacian to $u$ at every time $t \geq 0$.

The definition of bounded solutions raises two immediate questions: the existence and uniqueness of solutions. 
We will first address existence by showing that there always
exists a bounded solution which is minimal in a certain sense. On the other hand, uniqueness 
is one of the various formulations of stochastic completeness as we will discuss in the next section.

We note that although we have a self-adjoint operator $L$ on $\ell^2(X,m)$ so that we may apply the spectral
theorem and 
functional calculus to obtain a heat semigroup $e^{-tL}$ for $t \geq 0$, this semigroup acts on $\ell^2(X,m)$ and we
are actually interested in bounded solutions, i.e., solutions in $\ell^\infty(X)$. There is a number
of ways around this. One approach taken in \cite{KL12} is to extend the heat semigroup on $\ell^2(X,m)$
to all $\ell^p(X,m)$ spaces for $p \in [1,\infty]$ via monotone limits. Another approach is via the general
theory of Dirichlet forms and interpolation between $\ell^p(X,m)$ spaces, see \cite{Dav90, FOT94}.
We highlight a slightly different approach
in that we rather exhaust the graph via finite subgraphs, apply the spectral theorem to each operator
on the finite subgraph in order to get a solution and then take the limit. This rather elementary approach
has its roots in \cite{Dod83} which gave the first construction of the heat kernel
on a general Riemannian manifold without any geodesic completeness assumptions.

A basic tool behind the construction is the following minimum principle.
We call a subset $K \subseteq X$ \emph{connected} if any two vertices in $K$ can be connected via a
path that remains within $K$.
\begin{lemma}[Minimum principle for the heat equation]\label{l:minimum_principle}
Let $G$ be a connected weighted graph and let $K \subset X$ be a finite connected subset.  
Let $ T\ge0 $ and let $ u:X\times[0,T] \longrightarrow \R $ be such that $t \mapsto u(x,t) $ is continuously 
differentiable on $(0,T)$ for every $ x\in K $ and $u(\cdot, t) \in \mathcal{F}$ for all $t\in(0,T]$. 
Assume that $u$ satisfies
\begin{itemize}
\item[(A1)] \quad $ (\mathcal{L}+\pt) u\ge0 $ on $ K\times (0, T) $
\item[(A2)] \quad $ u\ge0 $ on $ \left(X\setminus K \times (0,T] \right) \cup (K\times \{0\}). $
\end{itemize}
Then, $ u\ge0 $ on $ K\times [0,T] $.
\end{lemma}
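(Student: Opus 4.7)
The plan is to argue by contradiction using the classical parabolic perturbation trick that upgrades the non-strict inequality (A1) into a strict one. Set $v_\eps(x,t) = u(x,t) + \eps t$ for $\eps>0$; since $\Lp$ acts only in the $x$-variable, one has $(\Lp + \pt)v_\eps = (\Lp + \pt)u + \eps > 0$ on $K \times (0,T)$. Suppose, for contradiction, that $u$ takes a negative value somewhere on $K \times [0,T]$; then so does $v_\eps$ for all sufficiently small $\eps$.

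Next, pass to a slightly smaller rectangle $K \times [0, T']$ with $T' < T$ chosen close enough to $T$ that $v_\eps$ still attains a negative value there. Since $K$ is finite, $K \times [0, T']$ is compact, and $v_\eps$ is continuous, so its minimum is attained at some $(x_0, t_0)$. By (A2), $v_\eps(\cdot, 0) = u(\cdot, 0) \geq 0$ on $K$, forcing $t_0 > 0$. Two standard first-order conditions then hold at the minimum: in time, $\pt v_\eps(x_0, t_0) \leq 0$ (with equality when $t_0 \in (0, T')$); in space, $v_\eps(x_0, t_0) \leq v_\eps(y, t_0)$ for every $y \in K$, and hence $u(x_0, t_0) \leq u(y, t_0)$ for every $y \in K$.

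The crucial step is controlling the non-local sum defining
$$\Lp u(x_0, t_0) = \frac{1}{m(x_0)}\sum_{y \in X} b(x_0,y)\bigl(u(x_0,t_0) - u(y,t_0)\bigr),$$
which ranges over all of $X$. For $y \in X \setminus K$, hypothesis (A2) gives $u(y, t_0) \geq 0$, while $v_\eps(x_0, t_0) < 0$ forces $u(x_0, t_0) < -\eps t_0 < 0 \leq u(y, t_0)$. Hence every summand is non-positive, and since $u(\cdot, t_0) \in \F$ the series converges absolutely, yielding $\Lp u(x_0, t_0) \leq 0$. Combined with $\pt u(x_0, t_0) = \pt v_\eps(x_0, t_0) - \eps \leq -\eps$, this gives $(\Lp + \pt)u(x_0, t_0) \leq -\eps < 0$, contradicting (A1). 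Sending $T' \nearrow T$ (and $\eps \to 0$ if needed) finishes the argument.

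The main obstacle, and the reason (A2) is imposed on all of $X \setminus K$ rather than only on a boundary layer adjacent to $K$, is the non-locality of $\Lp$: unlike in the usual parabolic minimum principle, $\Lp u(x_0, \cdot)$ couples $x_0$ to arbitrarily distant vertices through the weights $b(x_0, y)$, and without local finiteness there may even be infinitely many such neighbors. Keeping these off-$K$ contributions from spoiling the sign of $\Lp u$ at the would-be minimum is exactly what the global non-negativity in (A2) and the standing assumption $u(\cdot, t) \in \F$ are there to provide.
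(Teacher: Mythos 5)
Your proof is correct, but it takes a genuinely different route from the one in the paper. The paper works directly with $u$: at a negative minimum $(x_0,t_0)$ of $u$ on $K\times[0,T]$ one gets $(\Lp+\pt)u(x_0,t_0)\le 0$, which together with (A1) forces $\Lp u(x_0,t_0)=0$; since every summand $b(x_0,y)(u(x_0,t_0)-u(y,t_0))$ is nonpositive, equality forces $u(y,t_0)=u(x_0,t_0)<0$ for every neighbor $y$ of $x_0$, and iterating this along paths — using the connectedness of $G$ and the finiteness of $K$ — propagates the negative value to a vertex outside $K$, contradicting (A2). You instead use the classical perturbation $v_\eps=u+\eps t$ to make (A1) strict, so that the first-order conditions at the minimum (time derivative $\le 0$, spatial minimality on $K$, and (A2) handling the off-$K$ part of the non-local sum) already yield $(\Lp+\pt)u(x_0,t_0)\le-\eps<0$, a contradiction with no propagation step at all. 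Your version buys two things: it nowhere uses connectedness of $G$ or of $K$ (both are genuinely needed for the paper's propagation argument), and by retreating to $K\times[0,T']$ with $T'<T$ you avoid the issue that $u$ is only assumed differentiable on the open interval $(0,T)$, a point the paper's proof glosses over when it allows $t_0=T$. The paper's version is shorter and, as a byproduct, exhibits the propagation of the minimum along the graph, which is the germ of a strong minimum principle. Two cosmetic remarks: the parenthetical claim that $\pt v_\eps(x_0,t_0)=0$ for interior $t_0$ is unnecessary (and irrelevant), and the closing sentence about sending $T'\nearrow T$ and $\eps\to 0$ is superfluous, since the contradiction is already reached for a single admissible choice of $T'$ and $\eps$ attached to any hypothetical negative value of $u$.
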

\begin{proof}
Suppose to the contrary that there exists $(x_0,t_0) \in K \times [0,T]$ such that $u(x_0,t_0) <0$.
By continuity, we can assume that $(x_0,t_0)$ is a minimum for $u$ on $K \times [0,T]$. 
By assumption (A2), it follows
that $t_0 >0$ so that $\pt u(x_0,t_0) \leq 0$. Furthermore, by the definition of $\Lp$, at a minimum we have
$\Lp u(x_0,t_0) \leq 0$. Therefore, $(\Lp + \pt)u(x_0,t_0) \leq 0$ and assumption (A1) 
gives $(\mathcal{L}+\pt) u(x_0,t_0)=0 $ from
which 
$$\Lp u(x_0,t_0)= \frac{1}{m(x_0)}\sum_{y \in X} b(x_0,y)(u(x_0,t_0)-u(y,t_0))=0$$ 
follows. Therefore, since we are at a minimum, we now obtain that
$$u(y,t_0)=u(x_0,t_0)<0$$
for all $y \sim x_0$. Iterating this argument and using the connectedness of $K$ now gives a contradiction
to (A2) as $K \neq X$ and we assume that $G$ is connected.
\end{proof}

\begin{remark}
We note that the finiteness of $K$ is not necessary. It suffices to assume that there is at least
one vertex outside of $K$ and that the negative part of $u$
attains a minimum on $K \times [0,T]$. The minimum principle then follows with basically the same proof,
see, for example, Lemma~3.5 in \cite{KLW13}. However, assuming the finiteness of $K$ is sufficient
for our purposes. For a much more elaborate discrete integrated minimum principle for solutions of the heat 
equation, see Lemma~1.1 in \cite{Hua12}.
\end{remark}

We now sketch the construction of the minimal bounded solution of the heat equation. 
We note that if $G$ is not connected, we work on each connected component of $G$ separately.
Thus, for the construction, we can assume without loss of generality that $G$ is connected.
We let $(K_n)_{n=0}^\infty$
be an \emph{exhaustion sequence} of the graph $G$ by which we mean that each $K_n$ is finite and connected,
$K_n \subseteq K_{n+1}$ and $X=\bigcup_n K_n$. For each $n$, we let $L_n$ denote the restriction of $\Lp$ to
$C(K_n)=\ell^2(K_n,m)$. More precisely, for a function $f \in C(K_n)$, we extend $f$ by 0 to be defined on all of $X$ and let
$L_n f(x) = \Lp f(x)$ for $x \in K_n$. Then, $L_n$ is an operator on a finite dimensional Hilbert space and we can define
$$e^{-tL_n}=\sum_{k=0}^\infty \frac{(-t)^k}{k!}L_n^k$$
for $t \geq 0$. We then define the \emph{restricted heat kernels} $p_t^n(x,y)$ for $t \geq 0$ and $x, y \in K_n$ 
via
$$p_t^n(x,y)=e^{-tL_n}\hat{1}_y(x)$$ 
where $\hat{1}_y = 1_y/m(y)$.
It is immediate that  
$$u_n(x,t)=e^{-tL_n}u_0(x)=\sum_{y\in K_n}p_t^n(x,y)u_0(y)m(y)$$
satisfies the heat equation
on $K_n \times [0,\infty)$ with initial condition $u_0$. 

Furthermore, applying Lemma~\ref{l:minimum_principle}, gives $0 \leq p_t^n(x,y)m(y) \leq 1$ and
$p_t^n(x,y) \leq p_t^{n+1}(x,y)$ for all $x,y \in K_n, t \geq 0$ and $n \in \N$. 
Thus, we may take the limit
$$p_t^n(x,y) \to p_t(x,y)$$ 
as $n \to \infty$  to define $p_t(x,y)$ which is called the 
\emph{heat kernel} on $G$. 
Then, by applying Dini's theorem and monotone convergence, we can show that
$$u(x,t)=\sum_{y \in X}p_t(x,y)u_0(y)m(y)$$ 
is a bounded solution of the heat equation with initial condition $u_0$ on $G$. 
For further details and proofs, see Section~2 in \cite{Woj08} for the case of standard edge weights
and counting measure. An alternative approach for general graphs involving resolvents is given
in Section 2 of \cite{KL12}, in particular, Proposition 2.7. 

\begin{remark}
The approach via resolvents in \cite{KL12}
is equivalent to the heat kernel approach above via the Laplace transform formulas, that is,
$$e^{-tL_n}= \lim_{k\to \infty}\left(\frac{k}{t} \left(L_n+\frac{k}{t}\right)^{-1}\right)^k$$
for all $t>0$ and
$$(L_n + \al)^{-1} = \int_0^\infty e^{-t\al}{e^{-tL_n}} dt$$
for all $\al>0$. Both of these formulas also hold for the Laplacian $L$ defined 
on the entire $\ell^2(X,m)$ space.
We further note that $L_n$ is a positive definite operator on $\ell^2(K_n,m)$
as can be seen by direct calculation which gives
$$\langle L_n f, f \rangle = \frac{1}{2}\sum_{x, y \in K_n} b(x,y)(f(x)-f(y))^2 
+ \sum_{x \in K_n}f^2(x) \sum_{y \not \in K_n} b(x,y)$$
for all $f \in \ell^2(K_n,m)$.
\end{remark}

We mention two further properties that follow from the construction and Lemma~\ref{l:minimum_principle}
above. First, the solution $u$ is minimal in the following sense: if $u_0\geq0$ and $w \geq 0$ is any
solution of the heat equation with initial condition $u_0$, then $u \leq w$. Secondly, as $0 \leq \sum_{y \in K_n} p_t^n(x,y)m(y) \leq 1$ for all $n$, we
get
$$0 \leq \sum_{y \in X} p_t(x,y)m(y) \leq 1$$
by taking the limit $n \to \infty$. 
We will return to the second inequality in the following section.

We note that the approach above also gives that
if $f \in \ell^\infty(X)$ with $0 \leq f \leq 1$, then
$$0 \leq \sum_{y \in X} p_t(x,y)f(y)m(y) \leq 1.$$ 
This property is referred to by saying that the heat semigroup is \emph{Markov}. The fact
that the semigroup is Markov will
be used later in our discussion of curvature on graphs.

\section{Formulations of stochastic completeness}\label{s:sc}
\subsection{Stochastic completeness and uniqueness of solutions}
We have seen that given any bounded function, we can construct a bounded
solution of the heat equation with the given function as an initial condition. 
We now address the uniqueness of this solution.
In fact, we will see that the uniqueness is equivalent to the following property.

\begin{definition}[Stochastic completness]
Let $G$ be a weighted graph. If for all $x \in X$ and all $t \geq 0$, 
$$\sum_{y \in X} p_t(x,y)m(y)=1,$$ then 
$G$ is called \emph{stochastically complete.} Otherwise, $G$ is called \emph{stochastically incomplete}.
\end{definition}

\begin{remark}
There is a short way to state the definition above which we will have recourse to later in our
discussion of curvature. Namely, letting $P_t = e^{-tL}$ denote the heat semigroup
on $\ell^2(X,m)$ for $t \geq 0$ defined via the spectral theorem, it follows that $P_t$ can be extended
to $\ell^\infty(X)$, the space of bounded functions, via monotone limits, see Section~6 in \cite{KL12} where
this is actually shown for all $\ell^p(X,m)$ spaces with $p \in [1,\infty]$. In particular, letting $1 \in \ell^\infty(X)$
denote the function which is constantly 1 on all vertices, we then have
$$P_t 1(x) = \sum_{y \in X}p_t(x,y)m(y)$$
for $x \in X$. Thus, stochastic completeness can also be written as $P_t 1 =1$ for all $t \geq 0$.
\end{remark}

The goal of this section is to give a variety of characterizations for this property. We do not aim
to be exhaustive but rather highlight the characterizations that will be useful later in our presentation.
If $v \in \F$ satisfies $\Lp v = \lm v$ for $\lm \in \R$, then $v$ is called a $\lm$-\emph{harmonic} function.
In particular, the theorem below characterizes stochastic completeness in terms of non-existence
of $\lm$-harmonic bounded functions for $\lm<0$. This will be used in several places in what follows.

\begin{theorem}[Characterizations of stochastic completeness]\label{t:sc_characterizations}
Let $G$ be a weighted graph. The following statements are equivalent:
\begin{itemize}
\item[(i)] $G$ is stochastically complete.
\item[(ii)] Bounded solutions of the heat equations are uniquely determined by initial conditions.
\item[(ii$'$)] The only bounded solution of the heat equation with initial condition $u_0=0$ is $u=0$.
\item[(iii)] The only bounded solution to $\Lp v = \lm v$ for some/all $\lm <0$
is $v=0$.
\item[(iii$'$)] The only non-negative bounded solution to $\Lp v \leq \lm v$ 
for some/all $\lm<0$ is $v=0$
\item[(iv)] Every bounded function $v$ with $v^* = \sup v>0$ satisfies $\sup_{\Omega_\alpha} \Lp v \geq 0$
for every $\alpha < v^*$ where $\Omega_\alpha = \{ x\in X \ | \ v(x) > v^*- \alpha\}$.
\end{itemize}
\end{theorem}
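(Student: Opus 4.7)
My plan is to run a cycle of implications anchored in two pillars of Section~\ref{s:heat}: the minimal bounded solution $P_t u_0(x) = \sum_y p_t(x,y)u_0(y)m(y)$ and the minimum principle of Lemma~\ref{l:minimum_principle}. The equivalence (ii) $\Leftrightarrow$ (ii$'$) is immediate from linearity of the heat equation, so I address (i) $\Leftrightarrow$ (ii$'$) directly. If $G$ is stochastically complete and $u$ is bounded with $u(\cdot,0) = 0$ and $\|u\|_\infty \leq M$, then $M \pm u$ are bounded solutions with the constant nonnegative initial datum $M$, so by minimality $M \pm u \geq M P_t 1$; under (i) the right-hand side equals $M$, forcing $u \equiv 0$. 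Conversely, if stochastic completeness fails, then $1 - P_t 1$ is a nontrivial bounded solution with zero initial data (since $P_t 1$ is the bounded solution built by the construction from $u_0 \equiv 1$), violating (ii$'$).

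To bridge to the elliptic statements I establish (ii$'$) $\Leftrightarrow$ (iii) via the Laplace transform together with a resolvent comparison. For (iii) $\Rightarrow$ (ii$'$), given a bounded $u$ with zero initial data and $\lm < 0$, the function
$$v_\lm(x) := \int_0^\infty e^{\lm t} u(x,t)\,dt$$
is bounded by $\|u\|_\infty/|\lm|$, and an integration by parts using $\pt u = -\Lp u$, the zero initial data, and the exponential damping gives $\Lp v_\lm = \lm v_\lm$; hypothesis (iii) then forces $v_\lm \equiv 0$ for every $\lm < 0$, and pointwise injectivity of the Laplace transform applied to the continuous function $t \mapsto u(x,t)$ yields $u \equiv 0$. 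For (i) $\Rightarrow$ (iii), given bounded $v$ with $\Lp v = \lm v$ and $\al := -\lm > 0$, the function $\|v\|_\infty - v \geq 0$ solves $(\Lp + \al)w = \al\|v\|_\infty$; minimality of the $\ell^\infty$-resolvent $G_\al$, built by Laplace-transforming the minimal heat kernel, gives $\|v\|_\infty - v \geq \al\|v\|_\infty G_\al 1$, and since stochastic completeness is equivalent to $\al G_\al 1 \equiv 1$, we deduce $v \leq 0$; the symmetric argument applied to $-v$ gives $v \geq 0$. The ``some/all'' clause is then automatic because (i) does not involve $\lm$.

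Finally I handle (iii), (iii$'$) and (iv) together. For (iii$'$) $\Rightarrow$ (iii) I decompose $v = v^+ - v^-$: at $x$ with $v(x) > 0$ one has $v^+(x) = v(x)$ and $v^+(y) \geq v(y)$, so $\Lp v^+(x) \leq \Lp v(x) = \lm v^+(x)$; at $x$ with $v(x) \leq 0$, $v^+(x) = 0$ gives $\Lp v^+(x) \leq 0 = \lm v^+(x)$ since $\lm < 0$; the same computation applied to $-v$ handles $v^-$, so (iii$'$) forces $v^\pm \equiv 0$. For (iv) $\Rightarrow$ (iii$'$), if $v \geq 0$ is bounded, nonzero, and satisfies $\Lp v \leq \lm v$, then $v^* > 0$ and for every $\al < v^*$ we have $\Lp v \leq \lm v < \lm(v^* - \al) < 0$ throughout $\Omega_\al$, contradicting $\sup_{\Omega_\al} \Lp v \geq 0$. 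For (iii$'$) $\Rightarrow$ (iv) I argue by contrapositive: if (iv) fails with some bounded $v$, some $\al < v^*$, and some $c > 0$ with $\Lp v \leq -c$ on $\Omega_\al$, the truncation $w := (v - (v^* - \al))^+$ is nonnegative, bounded by $\al$, and nonzero; using $v(y) - (v^* - \al) \leq 0 = w(y)$ off $\Omega_\al$, one obtains $w(x) - w(y) \leq v(x) - v(y)$ for every $x \in \Omega_\al$, hence $\Lp w \leq \Lp v \leq -c$ there, and combined with $w \leq \al$ this yields $\Lp w \leq -(c/\al) w$ on $\Omega_\al$ and $\Lp w \leq 0 = -(c/\al) w$ elsewhere, so $w$ violates (iii$'$) with $\lm = -c/\al$. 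The step I expect to require the most care is the resolvent identity behind (i) $\Rightarrow$ (iii), since functional calculus naturally lives on $\ell^2(X,m)$ while both $v$ and $G_\al 1$ are only in $\ell^\infty$; this forces a monotone-limit construction of $G_\al$ from finite subgraphs in parallel with the heat kernel and requires pushing both the identity $(\Lp + \al) G_\al 1 = 1$ and its minimality through that limit.
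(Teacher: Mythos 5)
Your individual implications are essentially all correct, and several of them track the paper's sketch closely (the Laplace-transform bridge from the heat equation to $\lm$-harmonic functions, the truncation $w=(v-(v^*-\al))_+$ linking (iii$'$) and (iv)). But the directed graph of implications you have written down does not close up into an equivalence. You establish the cycle $(\mathrm{i})\Leftrightarrow(\mathrm{ii})\Leftrightarrow(\mathrm{ii}')$, $(\mathrm{i})\Rightarrow(\mathrm{iii})\Rightarrow(\mathrm{ii}')$, and separately $(\mathrm{iv})\Leftrightarrow(\mathrm{iii}')$ together with $(\mathrm{iii}')\Rightarrow(\mathrm{iii})$. Every arrow touching the pair $\{(\mathrm{iii}'),(\mathrm{iv})\}$ points \emph{out} of that pair; nowhere do you prove $(\mathrm{i})\Rightarrow(\mathrm{iii}')$, $(\mathrm{iii})\Rightarrow(\mathrm{iii}')$, or $(\mathrm{i})\Rightarrow(\mathrm{iv})$. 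This is exactly the direction the paper flags as requiring ``exhaustion and minimum principle arguments,'' and it is not automatic: a nonnegative bounded subsolution of $\Lp v\le\lm v$ need not be a solution, so your $v=v^+-v^-$ decomposition only gives the easy direction. The good news is that your own resolvent machinery repairs it: if $v\ge 0$ is bounded with $\Lp v\le \lm v$ and $\al=-\lm$, then $w=\|v\|_\infty - v\ge 0$ satisfies $(\Lp+\al)w\ge \al\|v\|_\infty$, and the same minimality statement for the exhaustion-built resolvent (now applied to nonnegative \emph{super}solutions, which is what the finite-subgraph minimum principle actually delivers) gives $w\ge \al\|v\|_\infty G_\al 1=\|v\|_\infty$ under (i), hence $v\equiv 0$. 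You must state and use this supersolution version; as written the proof of the theorem is incomplete.

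A secondary point: your claim that the ``some/all'' clause in (iii) is ``automatic because (i) does not involve $\lm$'' presupposes that each individual statement $(\mathrm{iii})_\lm$ implies (i). Your only return path from (iii) to (i) is the Laplace-transform step, which needs $v_\lm\equiv 0$ for a whole range of $\lm$, so it only shows that (iii)-for-all-$\lm$ implies (i). To get the ``some'' version you should run the contrapositive with the specific solution $u=1-P_t1\ge 0$: since the integrand is nonnegative and somewhere positive when (i) fails, $v_\lm(x)=\int_0^\infty e^{\lm t}(1-P_t1(x))\,dt$ is a nonzero bounded $\lm$-harmonic function for \emph{each} fixed $\lm<0$, so $(\mathrm{iii})_\lm$ fails for every $\lm$. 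The analogous care is needed for (iii$'$). With these two repairs your argument is complete and is a legitimate, somewhat more explicit variant of the paper's proof.
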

\begin{remark}[History and intuition]
We give a partial history with references for the equivalences above in various settings. The equivalence of (i),
(ii), and (iii) for diffusion processes on Euclidean spaces goes back to \cite{Fel54, Has60}.
For manifolds, see Theorem~6.2~and~Corollary~6.3 in \cite{Gri99} which also gives further
historical references. For Markov processes on discrete spaces, these equivalence go back to \cite{Fel57, Reu57}. 
For a proof in the case of graphs with
standard edge weights and counting measure, see Theorem~3.1.3 in \cite{Woj08}. For an extension
to weighted graphs see Theorem~1~in~\cite{KL12} which deals also with a more general phenomenon
called stochastic completeness at infinity. This allows for a discussion of these properties
in the case of operators of the type Laplacian plus a positive potential, see also \cite{MS} for a 
discussion of this property in the case of manifolds.
Condition (iv) is referred to as a weak Omori-Yau maximum principle after the original
work in \cite{Om67, Yau75}. The equivalence
of (iv) and stochastic completeness was shown for manifolds in \cite{PRS03} and for graphs as
Theorem 2.2 in \cite{Hua11a}.

We would also like to mention some of the intuition behind the equivalences. Roughly speaking, as mentioned
in the introduction and discussed further below,
a large volume growth or curvature decay is required for
stochastic completeness to fail. Let us discuss how this large volume growth can cause the failure of the
other properties listed in the theorem above. First, failure of (i) means that
the total probability of the process determined by the Laplacian to remain in the graph when
starting at a vertex $x$ is less than 1 at some time.
Hence, under a large enough volume growth (or curvature decay) the process can be swept off the graph
to infinity in a finite time. 
Second, failure of (ii) means that there exists a non-zero bounded solution of the heat equation with zero
initial condition. In other words, a large volume growth can create something out of nothing.
Third, by looking at the equation $\Lp v = \lm v$ for $v>0$ and $\lm<0$,
we see that $v$ must increase at some neighbor of each vertex. Hence, stochastic incompleteness
means that there is a sufficient amount of space in the graph to accommodate this growth while keeping $v$ bounded.
This gives the intuition for the failure of condition (iii).
Finally, $\Lp v \leq -C< 0$ on a set of vertices where $v$ is near its supremum means that there is always more 
room for $v$ to grow in the graph. This gives the intuition behind the failure of (iv). 

To summarize, in order for any of the four conditions above to fail requires a large amount of
space in the graph. Conversely, if there is no large growth, then the process remains in the graph
and the graph cannot accommodate non-zero bounded solutions to various equations.
Thus, stochastic completeness is also referred to as \emph{conservativeness}
or \emph{non-explosion}.
\end{remark}

\begin{proof}[Sketch of the proof of Theorem~\ref{t:sc_characterizations}] 
We now sketch a proof. For full details, please see the references
given in the remark directly above.
To show the equivalence between (i) and (ii), observe that both the constant function 
1 and $u(x,t) = \sum_{y \in X}p_t(x,y)m(y)$ are bounded solutions of the heat equation with initial condition 1.
The equivalence between (ii) and (ii$'$) is shown by taking the difference of two solutions of the heat equation
with initial conditions $u_0$. 
The equivalence between (ii$'$) and (iii) can be established via the fact that if $u$ is a
bounded solution of the heat equation with initial condition 0, then
$v(x) = \int_0^\infty e^{t\lm}u(x,t) dt$ is a bounded $\lm$-harmonic function for $\lm<0$.
To show the equivalence between (iii) and (iii$'$) one can use exhaustion and minimum principle arguments.
Finally, if (iii) fails and $v$ is a non-trivial bounded $\lm$-harmonic function for $\lm<0$, then
letting $\alpha = \sup v/2$, it can be shown that $\mathcal{L} v \leq -C < 0$ on $\Omega_\alpha$ so that (iv)
fails.
Conversely, if (iv) fails, then there exists a bounded function $v$ such that $\mathcal{L}v \leq -C$ for $C>0$
on $\Omega_\alpha$ for some $0< \alpha<v^*$. 
Then, $w= (v+\alpha -v^*)_+$ is a non-negative non-trivial bounded function with
$\mathcal{L} w \leq \lambda w$ for $\lambda = -C/\alpha$. Thus, (iii$'$) fails.
\end{proof}

\begin{remark}
We note that connectedness of the graph and the
semigroup property can be used to show that if $\sum_{y \in X}p_t(x,y)m(y)=1$
holds for one $x \in X$ and one $t>0$, then it holds for all $x \in X$ and all $t > 0$. However, we do not
require connectedness for the equivalence of the properties above as later we will need to consider
a possibly unconnected scenario. Thus, in the definition, we assume that the sum is 1
for all $x \in X$ and all $t \geq 0$.
\end{remark}

\subsection{A Khas$'$minskii criterion}
We will also need another property which implies stochastic completeness. This is sometimes referred to 
as a Khas$'$minskii-type criterion after \cite{Has60}. The formulation below is Theorem~3.3~in~\cite{Hua11a}, the
proof given there uses the weak Omori-Yau maximum principle, that is, condition (iv) in 
Theorem~\ref{t:sc_characterizations}.
\begin{theorem}\label{t:Khas}
Let $G$ be a weighted graph. If there exists $v \in \F$ which satisfies $v \geq 0$, $v(x_n) \to \infty$
for all sequences of vertices with $\Deg(x_n) \to \infty$ and
$$\Lp v + f(v) \geq 0$$
on $X \setminus K$ where $K\subseteq X$ is a set such that  $\Deg$ is a bounded function on $K$ and 
$f:[0,\infty) \lra (0,\infty)$ is an increasing continuously differentiable function with 
$$\int^\infty \frac{1}{f(r)}dr = \infty,$$
then $G$ is stochastically complete.
\end{theorem}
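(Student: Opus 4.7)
The plan is to verify condition (iii$'$) of Theorem~\ref{t:sc_characterizations}: it suffices to show that any non-negative bounded $h \in \F$ satisfying $\Lp h \leq \lm h$ for some $\lm<0$ must be identically zero. After rescaling $h$ we may assume $\lm = -1$ and argue by contradiction, supposing $H := \sup h > 0$.

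The natural auxiliary object is the concave, strictly increasing function $\Phi:[0,\infty)\to[0,\infty)$ defined by $\Phi(s) = \int_0^s \frac{dr}{f(r)}$; the divergence hypothesis $\int^\infty dr/f(r) = \infty$ ensures $\Phi$ maps $[0,\infty)$ onto $[0,\infty)$. Applying the tangent-line inequality for concave functions to each summand of the sum defining $\Lp(\Phi(v))$ gives the discrete chain-rule estimate $\Lp(\Phi(v)) \geq \Phi'(v)\,\Lp v$ at every vertex, and then the hypothesis $\Lp v \geq -f(v)$ on $X\setminus K$ together with $\Phi'(s)=1/f(s)$ yields the clean bound
\[
\Lp\bigl(\Phi(v)\bigr) \geq -1 \quad \text{on } X\setminus K.
\]

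For small $\delta>0$, I would form the comparison function $H_\delta := h - \delta\,\Phi(v)$ and let $M_\delta := \sup H_\delta$. Since $\Phi(v)\geq 0$, $M_\delta \leq H$, while testing at any point where $h$ is close to $H$ shows $M_\delta \to H$ as $\delta\to 0^+$; fix $\delta$ so small that $M_\delta > H/2$. Choosing a finite exhaustion $(K_n)$ of $X$ and picking $x_n$ that realizes $\max_{K_n} H_\delta$ gives $H_\delta(x_n)\to M_\delta$. The bound
\[
\delta\,\Phi(v(x_n)) = h(x_n) - H_\delta(x_n) \leq H - M_\delta/2
\]
forces $v(x_n)$ to remain in a bounded range, and the hypothesis that $\Deg$-blow-up implies $v$-blow-up then gives $\Deg(x_n)\leq D_\delta <\infty$ along this sequence. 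Using $H_\delta(y) \leq M_\delta$ in each summand of $\Lp H_\delta(x_n)$ yields the lower estimate $\Lp H_\delta(x_n) \geq \Deg(x_n)\bigl(H_\delta(x_n) - M_\delta\bigr) \to 0$, hence $\liminf_n \Lp H_\delta(x_n) \geq 0$. On the other hand, on $X\setminus K$ the two key inequalities combine into $\Lp H_\delta \leq -h + \delta$, so if the sequence $x_n$ eventually lies in $X\setminus K$ then from $h(x_n)\geq H_\delta(x_n)\to M_\delta$ we obtain $\limsup_n \Lp H_\delta(x_n) \leq -M_\delta + \delta$, which is strictly negative as soon as $\delta < H/4$. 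This contradicts the lower limit and forces $h\equiv 0$.

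The main obstacle is to ensure that the near-maximizing sequence $x_n$ eventually enters $X\setminus K$. If $\Deg$ is globally bounded, we may take $K=X$, but then $L$ is a bounded operator and $G$ is already stochastically complete by the discussion in Section~\ref{s:bounded}, so we may assume $X\setminus K \neq \emptyset$. When some of the $x_n$ fall in $K$, one uses the uniform bound of $\Deg$ on $K$ together with the boundedness of $h$, and a possible refinement of $\Phi$ (for instance replacing it by $1-e^{-F(s)}$ with $F(s)=\int_0^s dr/f(r)$, which is bounded and keeps $\Lp(\Phi(v))$ tame on $K$), to either move these points out of $K$ via an adapted exhaustion or to absorb their contribution into the estimates above. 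This is the bookkeeping carried out in the proof of Theorem~3.3 in \cite{Hua11a}.
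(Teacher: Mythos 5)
Your strategy is essentially the one behind Theorem~3.3 in \cite{Hua11a} that the paper cites: you run it through condition (iii$'$) rather than the weak Omori--Yau principle (iv), but the near-maximum sequence together with the estimate $\Lp H_\delta(x_n)\geq \Deg(x_n)\bigl(H_\delta(x_n)-M_\delta\bigr)$ is exactly a hands-on version of (iv). The concavity/chain-rule estimate $\Lp(\Phi(v))\geq \Phi'(v)\,\Lp v\geq -1$ on $X\setminus K$, the passage from $\Phi(v(x_n))$ bounded to $v(x_n)$ bounded to $\Deg(x_n)$ bounded, and the two conflicting limits are all correct. (One small slip: rescaling $h$ does not change $\lm$; you simply use the ``some/all'' in (iii$'$) to take $\lm=-1$.) The genuine gap is precisely the step you defer to ``bookkeeping'': forcing the near-maximizing sequence into $X\setminus K$. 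This is the one place where the hypothesis that $\Deg$ is bounded on $K$ must be used in an essential way, and neither of your proposed repairs closes it.

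In particular, replacing $\Phi$ by the bounded function $1-e^{-F}$ undermines your own earlier step: the deduction $\Deg(x_n)\leq D_\delta$ relied on $\Phi$ mapping $[0,\infty)$ \emph{onto} $[0,\infty)$, so that $\delta\,\Phi(v(x_n))\leq H-M_\delta/2$ confines $v(x_n)$ to a bounded range. Once $\Phi$ is bounded by $1$ that inequality says nothing about $v(x_n)$, so for $x_n\in X\setminus K$ you lose control of $\Deg(x_n)$ and the lower bound $\liminf_n\Lp H_\delta(x_n)\geq 0$ collapses. The correct resolution keeps the unbounded $\Phi$ and instead uses the subsolution inequality together with $\Deg\leq D_K$ on $K$ to show that $h$ cannot be near its supremum on $K$ at all: for $x\in K$,
\[
-h(x)\;\geq\;\Lp h(x)\;\geq\;\Deg(x)\bigl(h(x)-H\bigr),
\qquad\text{hence}\qquad h(x)\;\leq\;\frac{D_K}{1+D_K}\,H\;<\;H .
\]
Since $h(x_n)\geq H_\delta(x_n)\to M_\delta$ and $M_\delta\to H$ as $\delta\to 0^+$, choosing $\delta$ small enough that $M_\delta>\frac{D_K}{1+D_K}H$ forces $x_n\in X\setminus K$ for all large $n$, after which your contradiction $0\leq\liminf_n\Lp H_\delta(x_n)\leq\limsup_n\Lp H_\delta(x_n)\leq -M_\delta+\delta<0$ applies verbatim. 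With this observation inserted, your proof is complete.
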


\begin{remark}
This formulation of a Khas$'$minskii-type criterion is very precise
as it involves the weighted degree function
as well as the function $f$. 
A more general formulation is that the existence of
a function $v$ which satisfies $\Lp v \geq \lm v$ outside of a compact set and which goes to infinity 
in all directions implies stochastic
completeness, see Corollary~6.6 in \cite{Gri99} for the manifold case and Proposition~5.5 in \cite{KLW13} for 
weighted graphs.
Furthermore, we note that, in the manifold setting, the equivalence of this formulation and stochastic completeness
was shown as Theorem~1.2 in \cite{MV13}.
\end{remark}

\section{Boundedness of geometry and of Laplacians}\label{s:bounded}
\subsection{Boundedness of the Laplacian}
We now discuss the boundedness of the Laplacian which turns out to be equivalent to boundedness of the
weighted vertex degree. Furthermore, it turns out that boundedness always implies stochastic completeness.

We start by a simple observation. We recall that $L$ is the self-adjoint operator on $\ell^2(X,m)$
which is obtained from the closure of the form $Q_c$ acting on $C_c(X) \times C_c(X)$ and that 
$\Deg(x) = 1/m(x) \sum_{y \in X}b(x,y)$ for $x \in X$ is the weighted degree of a vertex $x$. 
We first characterize
the boundedness of this operator in terms of the boundedness of the weighted degree function. This
fact is certainly well-known, see, for example Theorem~11 in \cite{KL10}.

\begin{theorem}[Boundedness of $L$]\label{t:boundedness_l2}
Let $G$ be a weighted graph. The Laplacian $L$ is a bounded operator on $\ell^2(X,m)$
if and only if $\Deg$ is bounded on $X$.
\end{theorem}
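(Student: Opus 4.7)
The plan is to prove both directions via a quadratic form argument, exploiting that $L$ is the non-negative self-adjoint operator associated with the closure $Q$ of $Q_c$.

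For the implication $\|\Deg\|_\infty < \infty \Rightarrow L$ bounded, I would first bound the form $Q_c$ on $\ell^2(X,m)$. Using the elementary inequality $(a-b)^2 \le 2(a^2+b^2)$ together with the symmetry $b(x,y) = b(y,x)$, for $f \in C_c(X)$ one estimates
\[
Q_c(f,f) = \frac{1}{2}\sum_{x,y \in X} b(x,y)(f(x)-f(y))^2 \le 2 \sum_{x \in X} f(x)^2\, m(x) \Deg(x) \le 2\|\Deg\|_\infty \|f\|^2.
\]
Because $C_c(X)$ is dense in $\ell^2(X,m)$, a standard Cauchy-sequence argument in the form norm $(Q(\cdot,\cdot)+\|\cdot\|^2)^{1/2}$ then shows $D(Q) = \ell^2(X,m)$ and that the same bound persists on the full form domain. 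Since $D(Q) = D(L^{1/2})$, the non-negative self-adjoint operator $L^{1/2}$ is everywhere defined, hence bounded by the Hellinger--Toeplitz theorem, and so $L = (L^{1/2})^2$ is bounded with $\|L\| \le 2\|\Deg\|_\infty$.

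For the reverse direction, I would test $L$ on indicator functions. Since $L$ is bounded we have $1_x \in \ell^2(X,m) = D(L)$ for every $x \in X$. A direct expansion of the form yields
\[
Q_c(1_x, 1_x) = \frac{1}{2}\sum_{y,z \in X} b(y,z)(1_x(y) - 1_x(z))^2 = \sum_{y \in X} b(x,y) = m(x) \Deg(x),
\]
and hence $\langle L 1_x, 1_x \rangle = Q(1_x, 1_x) = m(x) \Deg(x)$. Cauchy--Schwarz then produces the pointwise bound
\[
\Deg(x) = \frac{\langle L 1_x, 1_x \rangle}{\|1_x\|^2} \le \|L\|,
\]
valid for every $x \in X$, so $\|\Deg\|_\infty \le \|L\|$.

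The one delicate point, and what I expect to be the main obstacle to a tidy write-up, is the handoff between the form $Q$ and the operator $L$ in the absence of any local finiteness assumption. Without extra hypotheses, the formal Laplacian $\Lp$ need not map $C_c(X)$ into $\ell^2(X,m)$, so I would deliberately avoid any pointwise identification of the form $L\vph = \Lp \vph$ and work exclusively through the quadratic form $Q$, which is well defined under assumptions (b1)--(b3) alone and is the very object from which $L$ is constructed.
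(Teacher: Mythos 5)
Your proof is correct, and it diverges from the paper's in an instructive way. The ``only if'' direction is the same in substance: both arguments rest on the computation $Q(1_x,1_x)=\langle L1_x,1_x\rangle=\Deg(x)m(x)$ and the observation that the normalized indicators form an orthonormal basis, so the diagonal of $L$ must be bounded by $\|L\|$. For the ``if'' direction, however, the paper simply cites the general theory of self-adjoint operators (Theorem~4.4 in Weidmann), whereas you give a self-contained quadratic-form estimate: the inequality $(a-b)^2\le 2(a^2+b^2)$ plus symmetry of $b$ yields $Q_c(f,f)\le 2\|\Deg\|_\infty\|f\|^2$ on $C_c(X)$, the form norm is then equivalent to the $\ell^2$ norm so the closure has domain all of $\ell^2(X,m)$, and boundedness of $L=(L^{1/2})^2$ follows from $D(L^{1/2})=D(Q)$ and Hellinger--Toeplitz. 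This buys you two things the paper's one-liner does not make explicit: the two-sided quantitative bound $\|\Deg\|_\infty\le\|L\|\le 2\|\Deg\|_\infty$, and a clean treatment of the domain issue you correctly flag at the end --- without local finiteness one cannot assume $\Lp$ maps $C_c(X)$ into $\ell^2(X,m)$, so working entirely through $Q$ rather than through pointwise identities $L\vph=\Lp\vph$ is exactly the right discipline (note also that boundedness of a \emph{positive operator's diagonal} in an orthonormal basis does not by itself bound the operator; it is the specific structure of $Q_c$, i.e.\ your displayed estimate, that makes the diagonal control sufficient here). No gaps.
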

\begin{proof}
A direct calculation gives 
$$\langle L1_x, 1_x \rangle = \Deg(x)m(x)$$
where $1_x$ is the characteristic function of the set $\{x\}$ for $x \in X$.
Now, the result follows from the general theory of self-adjoint operators on Hilbert space, see for 
example Theorem~4.4 in \cite{Weid80}, by noting that $\{1_x/\sqrt{m(x)} \ | \ x\in X\}$ forms an
orthonormal basis for $\ell^2(X,m)$.
\end{proof}

Weighted graphs which satisfy the condition that $\Deg$ is a bounded function are sometimes referred
to as having \emph{bounded geometry}. We now have a look at this in the cases most commonly appearing
in the graph theory literature.

\begin{example} Let $G$ be a weighted graph.

\begin{enumerate}
\item If $m(x)=\sum_{y \in X} b(x,y)$ is the sum of the edge weights, then $\Deg(x)=1$
for all $x \in X$. Thus, in this case, $L$ is always a bounded operator.
\item If $G$ has standard edge weights, i.e., $b(x,y)\in \{0,1\}$ for all $x, y \in X$ and $m$ is the counting
measure, i.e.,  $m(x)=1$ for all $x \in X$, then
$$\Deg(x) = \# \{y \ | \ y \sim x \}$$
for all $x \in X$. Thus, $\Deg$ is just the usual vertex degree which counts the number of
neighbors of $x$. We see that $L$ is bounded in this case if and only if there is a uniform upper 
bound on this quantity.
\end{enumerate}
\end{example}

\subsection{Boundedness and stochastic completeness}
We now discuss the connection between boundedness and stochastic completeness.
In particular, we show that if $\Deg$ is bounded on $X$, then the graph is stochastically complete.
This follows from a more general result which allows for some growth of the weighted 
vertex degree which we state below.

\begin{theorem}[Boundedness implies stochastic completeness]\label{t:bounded_SC}
Let $G$ be a weighted graph. If for every infinite path $(x_n)_{n=0}^\infty$ 
$$\sum_{n=0}^\infty \frac{1}{\Deg(x_n)} = \infty,$$ 
then $G$ is stochastically complete.
In particular, if
$\Deg$ is bounded on $X$, then $G$ is stochastically complete.
\end{theorem}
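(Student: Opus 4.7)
The plan is to argue by contradiction using the characterization of stochastic completeness via non-existence of bounded $\lm$-subharmonic functions, namely condition (iii$'$) in Theorem~\ref{t:sc_characterizations}. Suppose $G$ is stochastically incomplete. Then there exists a non-trivial non-negative bounded $v \in \F$ satisfying $\Lp v \leq \lm v$ for some $\lm <0$. Set $\mu = -\lm > 0$. Unfolding $\Lp v(x) \leq \lm v(x)$ and rearranging yields
$$\frac{1}{m(x)}\sum_{y \in X} b(x,y) v(y) \;\geq\; (\Deg(x) + \mu)\, v(x),$$
which says that the $b$-weighted average of $v$ over the neighbors of $x$ is at least $(1 + \mu/\Deg(x))\, v(x)$.

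The key step is to extract from this an infinite path along which $v$ grows geometrically. Pick any $x_0 \in X$ with $v(x_0) > 0$, which exists since $v$ is non-trivial and non-negative. The inequality above guarantees a neighbor $x_1 \sim x_0$ with $v(x_1) \geq (1 + \mu/\Deg(x_0))\, v(x_0)$. Iterating produces a sequence $(x_n)_{n=0}^\infty$ with $x_n \sim x_{n+1}$ and
$$v(x_n) \;\geq\; v(x_0) \prod_{k=0}^{n-1} \left(1 + \frac{\mu}{\Deg(x_k)}\right).$$
Since $v$ is strictly increasing along this sequence, the vertices $x_n$ are pairwise distinct, so $(x_n)$ is a genuine infinite path in $G$.

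Taking logarithms and using $\log(1+u) \geq u/(1+u)$ for $u \geq 0$ gives
$$\log \frac{v(x_n)}{v(x_0)} \;\geq\; \sum_{k=0}^{n-1} \frac{\mu}{\Deg(x_k) + \mu}.$$
To conclude, I split into cases. If $\Deg(x_k)$ is bounded above along some subsequence by a constant $C$, the corresponding terms are bounded below by $\mu/(C+\mu) > 0$, so the series diverges. Otherwise $\Deg(x_k) \to \infty$ and each term is asymptotic to $\mu/\Deg(x_k)$, so the hypothesis $\sum_k 1/\Deg(x_k) = \infty$ along the path $(x_n)$ forces divergence. In either regime, $v(x_n) \to \infty$, contradicting boundedness of $v$. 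The \emph{in particular} clause is immediate, since a uniform bound on $\Deg$ trivially implies $\sum_n 1/\Deg(x_n) = \infty$ along every infinite path.

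The main obstacle is that the argument must succeed uniformly in both the bounded-degree regime (where consecutive growth factors stay bounded away from $1$) and the unbounded-degree regime (where they degenerate to $1$ and one genuinely needs the divergence hypothesis). Passing through $\mu/(\Deg+\mu)$ rather than $\mu/\Deg$ handles both cases in one stroke and avoids any separate lower bound on $\Deg$ along the path.
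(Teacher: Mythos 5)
Your proof is correct and takes essentially the same route as the paper's: both extract from the $\lm$-(sub)harmonic equation an infinite path along which $v$ grows by factors $1+\mu/\Deg(x_k)$ and conclude from the divergence of $\sum_k 1/\Deg(x_k)$ that the product, hence $v$, is unbounded. The only cosmetic differences are your use of condition (iii$'$) in place of (iii) and your explicit logarithmic case analysis in place of the standard equivalence $\sum_k a_k=\infty \Leftrightarrow \prod_k(1+a_k)=\infty$ for $a_k\geq 0$.
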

\begin{proof}
By Theorem~\ref{t:sc_characterizations}~(iii), it suffices to show that any non-trivial $v \in \F$ with 
$\Lp v = \lm v$ for $\lm <0$ is not bounded. Suppose that $v(x_0)>0$ for some $x_0 \in X$. The equation
$\Lp v(x_0) = \lm v(x_0)$ can be rewritten as
$$\frac{1}{m(x_0)} \sum_{y \in X} b(x_0,y)v(y) = \left(\Deg(x_0)-\lm \right)v(x_0).$$
Hence, there exists $x_1 \sim x_0$ such that
$$v(x_1) \geq \left( 1 - \frac{\lm}{\Deg(x_0)} \right) v(x_0).$$
Now, we iterate this argument to get a sequence of vertices $x_0 \sim x_1 \sim x_2 \ldots$
such that
\begin{align*}
v(x_{n+1}) &\geq \left(1 - \frac{\lm}{\Deg(x_n)} \right)v(x_n) \\
&\geq \prod_{k=0}^n \left(1 - \frac{\lm}{\Deg(x_k)}\right) v(x_0).
\end{align*}
As $\sum_{k=0}^\infty 1/\Deg(x_k) = \infty$ if and only if $\prod_{k=0}^\infty (1 - \lm/\Deg(x_k))=\infty$,
it follows that $v$ cannot be bounded.
\end{proof}

\begin{remark} 
For an even shorter proof of the boundedness portion using the Omori-Yau maximum principle, see
Lemma~2.3 in \cite{Hua11a}. This is then extended to a boundendess of a notion of a global weighted degree
in Theorem~2.9 in \cite{Hua11a}. For a more precise result which only considers the maximal outward degree
on spheres in the case of standard edge weights and counting measure, see Theorem~4.2 in \cite{Woj11}
and Theorem~5.5 in \cite{Hua11a}.

When $b$ is the standard edge weight and $m$ is the counting measure, the boundendess result
was first shown via a minimum principle in \cite{DM06}. This proof was then extended to the case
of arbitrary edge weights and counting measure in \cite{Dod06}.

A more structural proof of the boundedness portion of Theorem~\ref{t:bounded_SC} can be 
found as Corollary~27 in \cite{KL10} and can be described as follows.  It turns out that 
the boundedness of $L$ on $\ell^2(X,m)$ also implies
boundedness of $\Lp$ acting on $\ell^\infty(X)$. In fact, the 
boundendess of $\Lp$ restricted to $\ell^p(X,m)$ for one $p \in [1,\infty]$ implies the boundedness of the restriction
of $\Lp$ to $\ell^p(X,m)$ for all $p \in [1,\infty]$. This was shown via the Riesz-Thorin interpolation
theorem as Theorem~9.3 in \cite{HKLW12} following earlier work presented as Theorem~11 in \cite{KL10}. Now, 
if $\Lp$ gives a bounded operator on $\ell^\infty(X)$,
then it is clear that the equation $\Lp v = \lm v$ cannot have a non-zero bounded solution for all $\lm <0$. 
Thus, by Theorem~\ref{t:sc_characterizations}~(iii), $G$ is stochastically complete.

We also note that the lower bound for the $\lm$-harmonic function appearing in the proof above 
can also be used to establish the essential self-adjointness
of the restriction of $\Lp$ to $C_c(X)$. See Proposition~2.2 in \cite{Gol14} 
or, more generally, Theorem~11.5.2 in \cite{Schm20}.
\end{remark}

\section{Weakly spherically symmetric graphs, trees and anti-trees}\label{s:symmetry}
\subsection{Weakly spherically symmetric graphs}
We now discuss a class of graphs for which we will give a full characterization of stochastic completeness.
These are weakly spherically symmetric graphs. They are an analogue to model manifolds which are
extensively discussed in \cite{Gri99}. The definition we give here was first presented in \cite{KLW13}
and later generalized in \cite{BG15}.

We start with some definitions. 
We assume that $G$ is connected and
recall that $d(x,y)$ denotes the combinatorial graph distance between
vertices $x$ and $y$, that is, the least number of edges in a path connecting $x$ and $y$. For a vertex
$x_0 \in X$ and $r \in \N_0$, we let $S_r(x_0)$ and $B_r(x_0)$ denote the sphere and ball of radius $r$
about $x_0$, that is,
$$S_r(x_0) = \{ x \in X \ | \ d(x,x_0)=r \}$$
and $ B_r(x_0) = \bigcup_{k=0}^r S_k(x_0) = \{ x \in X \ | \ d(x,x_0) \leq r \}.$
To ensure that these are finite sets, we now assume that $G$ is locally finite.

We will generally suppress the dependence on $x_0$ and just write $S_r$ and $B_r$.
We can then define the \emph{outer} and \emph{inner degrees} of a vertex $x \in S_r$ as
$$\Deg_{\pm}(x)  =\frac{1}{m(x)} \sum_{y \in S_{r \pm 1}} b(x,y).$$
That is, $\Deg_+(x)$ gives the total edge weight of edges going ``away'' from $x_0$ divided by the vertex
measure while $\Deg_-(x)$ of those going ``back'' towards $x_0$.

\begin{definition}[Weakly spherically symmetric graphs]
A locally finite connected weighted graph $G$ is called \emph{weakly spherically symmetric} 
if there exists a vertex $x_0 \in X$ such
that the functions $\Deg_\pm$ depend only on the distance to $x_0$. In this case, we will write
$\Deg_{\pm}(r)$ for $\Deg_{\pm}(x)$ when $x \in S_r(x_0)$.
\end{definition}
Again, although all of the concepts above depend on the choice of $x_0$, we will suppress this dependence
in our notation.  We note that this notion of symmetry is weak in the sense that we do not assume
anything about the edge weights between vertices on the same sphere nor do we assume anything about
the structure of the connections between vertices on successive spheres.

We will now state a full characterization for the stochastic completeness of such graphs. In order to do so, we introduce
the notion of \emph{boundary growth} of a ball as
$$\partial B(r) = \sum_{x \in S_r}\sum_{y \in S_{r+1}} b(x,y).$$
We note that $\partial B(r)$ reflects the total edge weight of edges leaving the ball $B_r$. Furthermore, 
for weakly spherically symmetric graphs, this can be written as
$$\partial B(r) = \Deg_+(r)m(S_r)= \Deg_{-}(r+1)m(S_{r+1})$$
as follows directly from the definitions. 
In particular, we note that
$$\partial B(r)= \frac{\partial B(r-1) \Deg_+(r)}{\Deg_-(r)}.$$
In what follows, we also let
$$V(r) = m(B_r)=\sum_{k=0}^r m(S_k)$$
denote the measure of a combinatorial ball of radius $r$.

\begin{theorem}[Stochastic completeness of weakly spherically symmetric graphs]\label{t:sc_symmetry}
If $G$ is a weakly spherically symmetric graph, then $G$ is stochastically complete if and only if
$$\sum_{r=0}^\infty \frac{V(r)}{\partial B(r)} =  \infty.$$
\end{theorem}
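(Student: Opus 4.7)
The plan is to apply Theorem~\ref{t:sc_characterizations}(iii), which reduces stochastic completeness to the non-existence of a non-trivial bounded solution of $\Lp v = \lm v$ for some $\lm<0$. Under weak spherical symmetry it will be enough to analyze \emph{radial} candidates, i.e., functions with $v(x)=v(r)$ whenever $x\in S_r$; I explain this reduction first, then solve the resulting one-dimensional problem.

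For the reduction to radial functions, I would work with the minimal bounded solution $u(x,t)=\sum_y p_t(x,y)m(y)$ of the heat equation with initial condition $1$, constructed via the exhaustion $K_n=B_n$ outlined in Section~\ref{s:heat}. Since $\Deg_\pm$ depend only on $r$, a direct computation shows that each $L_n$ sends the subspace of radial functions on $K_n$ into itself; as the initial condition $1$ is radial, each $u_n$ is radial, and therefore so is its monotone limit $u$. Then $w(x,t)=1-u(x,t)$ is a bounded, non-negative, radial solution of the heat equation with zero initial condition, and $w \not\equiv 0$ precisely when $G$ is stochastically incomplete. For $\lm<0$, the Laplace transform $v(x)=\int_0^\infty e^{t\lm} w(x,t)\,dt$ is then a bounded radial $\lm$-harmonic function, non-trivial iff $w\not\equiv 0$. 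This reduces the theorem to showing that a non-trivial bounded radial $\lm$-harmonic function exists if and only if $\sum_r V(r)/\partial B(r)<\infty$.

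For a radial function, $\Lp v(x) = \Deg_+(r)(v(r)-v(r+1)) + \Deg_-(r)(v(r)-v(r-1))$ since same-sphere contributions cancel. Setting $\Lp v=\lm v$, multiplying through by $m(S_r)$, and using $\Deg_+(r)m(S_r)=\partial B(r)$ together with $\Deg_-(r)m(S_r)=\partial B(r-1)$ yields the two-term relation
$$\partial B(r)(v(r+1)-v(r)) - \partial B(r-1)(v(r)-v(r-1)) = -\lm m(S_r) v(r),$$
which telescopes (with $\partial B(-1)=0$) to
$$v(r+1)-v(r) = \frac{-\lm}{\partial B(r)}\sum_{k=0}^r m(S_k) v(k).$$
Thus the space of radial $\lm$-harmonic functions is one-dimensional; normalizing $v(0)>0$ and taking $\lm<0$, the resulting sequence is positive and strictly increasing (and the case $v(0)=0$ forces $v\equiv 0$ by induction).

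Using $v(0)\leq v(k)\leq v(r)$ for $0\leq k\leq r$, the recursion gives the two-sided control
$$v(0) + (-\lm)v(0)\sum_{r=0}^{R-1}\frac{V(r)}{\partial B(r)} \leq v(R) \leq v(0)\prod_{r=0}^{R-1}\left(1 + \frac{(-\lm) V(r)}{\partial B(r)}\right),$$
so $v$ is bounded if and only if $\sum_r V(r)/\partial B(r)<\infty$, which combined with the radial reduction proves the theorem. The main obstacle is the reduction step: the naive attempt to symmetrize an arbitrary bounded $\lm$-harmonic function by averaging over spheres can produce the zero function and so fails outright. The detour through the minimal heat semigroup bypasses this, and it succeeds precisely because weak spherical symmetry is what makes the exhaustion $K_n=B_n$ preserve the radial subspace.
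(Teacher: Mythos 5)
Your argument is correct, and the one-dimensional core (the telescoped recursion $\partial B(r)(v(r+1)-v(r)) = -\lm\sum_{k\le r}m(S_k)v(k)$ together with the two-sided sum/product estimates) coincides with the paper's. Where you genuinely diverge is the reduction to radial functions. The paper first passes to non-negative subsolutions via condition (iii$'$) of Theorem~\ref{t:sc_characterizations} combined with the Khas$'$minskii criterion of Theorem~\ref{t:Khas}, and only then averages over spheres; this evades the sign-cancellation problem you rightly flag, because for a non-negative nontrivial subsolution the spherical average is again non-negative, nontrivial and a subsolution (the constancy of $\Deg_\pm$ on spheres is exactly what makes the averaged cross-sphere sums close up). You instead exploit that the ball exhaustion $K_n=B_n$ makes each restricted operator $L_n$ --- including its killing term $\Deg_+(n)f(n)$ on the boundary sphere --- preserve radial functions, so $P_t1$ is radial, $w=1-P_t1$ is a non-negative radial solution with zero initial data that is nontrivial precisely when $G$ is stochastically incomplete, and its Laplace transform produces a genuine radial $\lm$-harmonic function with $v(0)>0$. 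This buys a cleaner endgame: you never handle subsolutions or invoke Khas$'$minskii, and the recursion applies verbatim in both directions of the equivalence. The cost is that you must carry the semigroup construction and the (ii$'$)$\Rightarrow$(iii) Laplace-transform argument explicitly, but both are already set up in Sections~\ref{s:heat} and~\ref{s:sc}, so your version is, if anything, the more self-contained one relative to this survey.
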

We give a sketch of the proof. For further details, see the proof of Theorem~5 in \cite{KLW13}.
For standard edge weights and counting measure, this was first shown as Theorem~4.8 in \cite{Woj11},
see also Theorem~5.10 in \cite{Hua11a} for an alternative proof in this case using the weak Omori-Yau maximum
principle.
\begin{proof}
By Theorem~\ref{t:sc_characterizations}~(iii)
it suffices to show that any bounded solution to $\Lp v = \lm v$ for $\lm<0$ is zero if and only if 
$\sum_{r=0}^\infty \frac{V(r)}{\partial B(r)} = \infty.$ By applying the characterization in 
terms of non-negative subsolutions in Theorem~\ref{t:sc_characterizations}~(iii$'$) and 
the Khas$'$minskii criterion from Theorem~\ref{t:Khas}, it suffices to consider only non-negative solutions $v$.
Finally, by averaging a solution over spheres, it suffices to consider only solutions depending on the
distance to $x_0$. 

Thus, we may write $v(r)$ for $v(x)$ for all $x \in S_r$ and note that stochastic completeness
is equivalent to the triviality of $v$ if $v$ is bounded. Now, by induction on $r \in \N_0$, it can
be shown by using the formulas above that  
$\Lp v(r) = \lm v(r)$ if and only if
$$v(r+1)-v(r) = \frac{-\lm}{\partial B(r)} \sum_{k=0}^r v(k)m(S_k).$$
In particular, if $v(0)>0$, then $v$ is strictly increasing with respect to $r$. Therefore, we estimate
$$ -\frac{\lm V(r)}{\partial B(r)} v(0) \leq v(r+1)-v(r) \leq -\frac{\lm V(r)}{\partial B(r)} v(r)$$
so that 
$$ v(r) - \frac{\lm V(r)}{\partial B(r)} v(0) \leq v(r+1) \leq \left(1 - \frac{\lm V(r)}{\partial B(r)} \right) v(r).$$
Iterating this down to $r=0$, gives 
$$ -\lm \sum_{k=0}^r \frac{V(k)}{\partial B(k)}  v(0) \leq v(r+1) \leq 
\prod_{k=0}^r \left(1 - \frac{\lm V(k)}{\partial B(k)} \right) v(0).$$
Hence, if $v$ is bounded, then $\sum_{k=0}^\infty \frac{V(k)}{\partial B(k)} <\infty$.
On the other hand, if $v$ is not bounded, then
$ \prod_{k=0}^\infty \left(1 - \frac{\lm V(k)}{\partial B(k)} \right) =\infty$ which is
equivalent to
$\sum_{k=0}^\infty \frac{V(k)}{\partial B(k)} =\infty$. This completes the proof.
\end{proof}

\subsection{Trees and anti-trees} 
We now illustrate the theorem above linking stochastic completeness of weakly spherically symmetric graphs 
and the ratio of the growth of the ball and the boundary of the ball with several examples. 
In particular, we introduce the class of spherically symmetric trees and anti-trees.

We start with spherically symmetric trees. 
For this, we first take standard edge weights and counting measure.
Such a graph $G$ is then called a \emph{spherically symmetric tree} if $G$ contains no cycles
and there exists a vertex $x_0 \in X$ such for all $x \in S_r$
$$\Deg_+(x) = \# \{ y \ | \  y \sim x, y \in S_{r+1} \}$$ 
only depends on $r$. Thus, we may write $\Deg_+(x)=\Deg_+(r)$ for
all $x \in S_r$.  Note that the lack of cycles implies that $\Deg_-(r)=1$ for all
$r \in \N$ so that the number of edges leading back to $x_0$ is minimal in order to 
have a connected graph.

We note that for spherically symmetric trees, we have
$$m(S_r) = \prod_{k=0}^{r-1} \Deg_+(k)$$ 
and $\partial B(r) = m(S_{r+1})$ as follows by direct calculations.
We now apply our characterization of stochastic completeness of weakly spherically
symmetric graphs to the case of such trees.

\begin{corollary}[Stochastic completeness and spherically symmetric trees]
If $G$ is a spherically symmetric tree, then $G$ is stochastically complete
if and only if
$$\sum_{r=0}^\infty \frac{1}{\Deg_+(r)} = \infty.$$
\end{corollary}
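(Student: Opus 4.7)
The plan is to deduce the corollary from Theorem~\ref{t:sc_symmetry} by rewriting the ratio $V(r)/\partial B(r)$ for a spherically symmetric tree in a form directly comparable to $1/\Deg_+(r)$. Using the two identities $m(S_r) = \prod_{k=0}^{r-1}\Deg_+(k)$ and $\partial B(r) = m(S_{r+1})$ recorded just before the corollary, a direct computation gives
$$\frac{V(r)}{\partial B(r)} = \sum_{k=0}^r \frac{m(S_k)}{m(S_{r+1})} = \sum_{k=0}^r \prod_{j=k}^r \frac{1}{\Deg_+(j)}.$$
Thus, by Theorem~\ref{t:sc_symmetry}, the corollary reduces to showing that the divergence of this double series is equivalent to the divergence of $\sum_{r=0}^\infty 1/\Deg_+(r)$.

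One direction is immediate: retaining only the $k = r$ summand in the inner sum yields $V(r)/\partial B(r) \geq 1/\Deg_+(r)$, so divergence of $\sum 1/\Deg_+(r)$ forces divergence of $\sum V(r)/\partial B(r)$, and hence stochastic completeness.

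The reverse direction is the main obstacle, though still routine. Assuming $\sum 1/\Deg_+(r) < \infty$, one has $\Deg_+(r) \to \infty$, so there is some $r_0$ with $\Deg_+(r) \geq 2$ for all $r \geq r_0$. I would then prove the pointwise estimate $V(r)/\partial B(r) \leq C/\Deg_+(r)$ for all $r$ large by splitting the inner sum at $k = r_0$: the tail $k \geq r_0$ becomes a geometric-type series of ratio at most $1/2$ whose dominant term is $1/\Deg_+(r)$, while the finitely many terms with $k < r_0$ carry an extra decay factor $2^{-(r-r_0)}$ and are easily dominated by a constant multiple of $1/\Deg_+(r)$. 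Summing over $r$ and applying Theorem~\ref{t:sc_symmetry} then gives stochastic incompleteness. The only care required is tracking the geometric constants in the tail estimate; no step is conceptually deep, and the spherical-symmetry and tree assumptions enter only through the clean product formulas for $m(S_r)$ and $\partial B(r)$ (the tree condition through $\Deg_-(r)=1$), after which the equivalence is a purely analytic comparison of series.
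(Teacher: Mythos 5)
Your proof is correct and follows essentially the same route as the paper: both deduce the corollary from Theorem~\ref{t:sc_symmetry} by rewriting $V(r)/\partial B(r)$ via the identities $m(S_r)=\prod_{k=0}^{r-1}\Deg_+(k)$ and $\partial B(r)=m(S_{r+1})$ and then comparing the resulting series with $\sum_r 1/\Deg_+(r)$. The only difference is that the paper simply invokes the limit comparison test at this point, whereas you supply the explicit two-sided comparison (the $k=r$ term for the lower bound, the geometric tail estimate once $\Deg_+(r)\geq 2$ for the upper bound), which is in fact the more careful way to justify the equivalence.
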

\begin{proof}
From the remarks directly above we obtain
$$\sum_{r=0}^\infty  \frac{V(r)}{\partial B(r)} = \sum_{r=0}^\infty \frac{1+\sum_{k=1}^r \prod_{j=0}^{k-1} \Deg_+(j)}{ \prod_{k=0}^{r} \Deg_+(k)}.$$
By the limit comparison test, it then follows that the divergence of the series above is equivalent to divergence
of the series $\sum_{r=0}^\infty 1/\Deg_+(r)$. Thus, the conclusions follows by Theorem~\ref{t:sc_symmetry}.
\end{proof}

The result above was first presented as Theorem~3.2.1 in \cite{Woj08}.   
It establishes the sharpness of the condition given for stochastic completeness in terms of the weighted
vertex degree on paths presented in Theorem~\ref{t:bounded_SC} in the previous section.

We note that the case of
spherically symmetric trees already provides a contrast with the manifold case as if we take
$V(r) = m(B_r)$ to be the counterpart of the volume growth in the Riemannian setting, then there
exist stochastically incomplete trees with factorial volume growth.
However, a much more striking example is that of anti-trees which we define next.
The basic idea is that we choose an arbitrary sequence of natural numbers for the number of 
vertices on the sphere and then connect all vertices between successive spheres. Thus, these
are the antithesis of trees in the sense that for trees the removal of a single edge between spheres
creates a disconnected graphs while for anti-trees one must remove all of the edges between spheres.

\begin{definition}[Anti-trees]
Let $(a_r)$ be a sequence with $a_r \in \N$ for $r \in \N$ and $a_0=1$.
A graph $G$ is called an \emph{anti-tree} with sphere growth $(a_r)$ 
if $G$ has standard edge weights and counting measure and the vertex set $X$ can be written
as a disjoint union $X= \bigcup_r A_r$ where $m(A_r)=a_r$ and $b(x,y)= b(y,x) = 1$ for all $x \in A_r, y \in A_{r+1}$
for $r \in \N_0$ and zero otherwise.
\end{definition}

Thus, by the definition of the edge weight, an anti-tree with sphere growth $(a_r)$  satisfies
$m(S_r)=a_r$ and is weakly spherically symmetric with $\Deg_{\pm}(r)=a_{r \pm 1}$. 
Furthermore,
$\partial B(r) = a_r a_{r+1}$ as each vertex in the sphere $S_r$ is connected to
all vertices in the sphere $S_{r+1}$. Therefore, we obtain the following characterization of stochastic completeness
in the case of anti-trees.
\begin{corollary}[Stochastic completeness and anti-trees]\label{c:antitree_sc}
If $G$ is an anti-tree with sphere growth $(a_r)$, then $G$ is stochastically complete
if and only if
$$\sum_{r=0}^\infty \frac{\sum_{k=0}^r a_k}{a_r a_{r+1}} = \infty.$$
\end{corollary}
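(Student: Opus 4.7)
The plan is to verify that any anti-tree falls under the scope of Theorem~\ref{t:sc_symmetry} and then compute the two quantities $V(r)$ and $\partial B(r)$ appearing there in terms of the sphere growth sequence $(a_r)$.

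First I would check that the combinatorial distance on an anti-tree agrees with the labeling of the sets $A_r$, i.e., that $S_r(x_0) = A_r$ for any fixed $x_0 \in A_0$. Since the edge weight is supported on pairs $(x,y)$ with $x \in A_r, y \in A_{r\pm 1}$, any path from $x_0$ moves up or down exactly one index per step, so the distance from $x_0$ to a vertex in $A_r$ is exactly $r$. In particular $m(S_r) = a_r$. Next, for $x \in S_r$ the definitions give
$$\Deg_{+}(x) = \sum_{y \in S_{r+1}} b(x,y) = a_{r+1}, \qquad \Deg_{-}(x) = a_{r-1},$$
since $x$ is joined by an edge of weight $1$ to every vertex of $S_{r\pm 1}$ and the vertex measure is counting measure. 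Hence $\Deg_\pm$ depends only on $r$, so the anti-tree is a locally finite connected weakly spherically symmetric graph, and Theorem~\ref{t:sc_symmetry} applies.

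Now I would compute the two relevant quantities. The volume of the ball is
$$V(r) = m(B_r) = \sum_{k=0}^{r} m(S_k) = \sum_{k=0}^{r} a_k,$$
while the boundary term is
$$\partial B(r) = \sum_{x \in S_r}\sum_{y \in S_{r+1}} b(x,y) = a_r \cdot a_{r+1},$$
because every one of the $a_r$ vertices in $S_r$ is joined to every one of the $a_{r+1}$ vertices in $S_{r+1}$ by an edge of weight $1$. (Alternatively, this follows from the general identity $\partial B(r) = \Deg_+(r)\,m(S_r) = a_{r+1}\cdot a_r$.) Substituting into Theorem~\ref{t:sc_symmetry}, the graph is stochastically complete if and only if
$$\sum_{r=0}^{\infty} \frac{V(r)}{\partial B(r)} = \sum_{r=0}^{\infty} \frac{\sum_{k=0}^{r} a_k}{a_r\, a_{r+1}} = \infty,$$
which is the claim.

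There is no real obstacle here; the proof is a direct specialization. The only step that requires a moment of thought is the identification $S_r(x_0) = A_r$, which relies on the structural property that edges in an anti-tree exist only between consecutive layers and therefore the combinatorial distance genuinely realizes the layer index. Everything else is a mechanical substitution of $m(S_r) = a_r$ and $\partial B(r) = a_r a_{r+1}$ into the characterization already proved for weakly spherically symmetric graphs.
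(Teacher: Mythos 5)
Your proposal is correct and follows exactly the paper's route: identify the spheres with the layers $A_r$, note that $\Deg_{\pm}(r)=a_{r\pm1}$ makes the anti-tree weakly spherically symmetric, compute $V(r)=\sum_{k=0}^r a_k$ and $\partial B(r)=a_r a_{r+1}$, and substitute into Theorem~\ref{t:sc_symmetry}. The only difference is that you spell out the verification $S_r(x_0)=A_r$, which the paper leaves implicit.
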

\begin{proof}
This follows directly from the definition of an anti-tree and Theorem~\ref{t:sc_symmetry}.
\end{proof}

We note that if $a_r$ grows like $r^{2+\eps}$ for any $\eps>0$, then the corresponding anti-tree is
stochastically incomplete. Furthermore, $V(r)$ grows like $r^{3+\eps}$ in this case. Thus, unlike in the case
of manifolds, for the combinatorial graph metric, there exists stochastically incomplete graphs
with polynomial volume growth. We will also see later that these are the smallest such examples.
This motivates the move to different graph metrics which take into
account not only the combinatorial graph structure but also the vertex degree. 
These are the so-called intrinsic metrics which we introduce in the next section.

The result on stochastic incompleteness of anti-trees presented above originally appeared as 
Example~4.11 in \cite{Woj11}. To the best of our knowledge, the first example of 
an anti-tree in the special case of sphere growth $a_r=r+1$ appears as Example~2.5 in \cite{DK88}.
This anti-tree is a transient graph with the bottom of the spectrum at 0. The same graph
appears in \cite{Web10} as an example of a stochastically complete graph with unbounded
vertex degree.

\section{Intrinsic metrics}\label{s:intrinsic}
\subsection{A brief historical overview}
As we have seen, in order to hope for a counterpart for Grigor$'$yan's volume growth result
for graphs, we must go beyond the combinatorial graph distance when defining volume growth. In this section we introduce
the notion of an intrinsic metric for a weighted graph. This concept arises from Dirichlet form theory.
Although beyond the scope of this article, we mention that the form associated to the Laplacian,
which is a restriction of the graph energy form,
is a regular Dirichlet form which is not strongly local. For background on Dirichlet forms see \cite{FOT94},
for the connection between graphs and non-local regular Dirichlet forms see \cite{KL12}.
Furthermore, let us caution that the notion of an intrinsic metric for a Dirichlet form is distinct from the notion
of an intrinsic metric in the sense of length spaces as discussed, for example, in \cite{BBI01}.

The concept of an intrinsic metric for strongly local Dirichlet forms was brought into full fruition
in \cite{Stu94}. This allowed for the extension of a variety of results for Riemannian manifolds,
including Grigor$'$yan's volume growth result, to the setting of strongly local Dirichlet forms.
In particular, this covers the Riemannian setting as 
the Riemannian geodesic distance is an intrinsic metric for the strongly local
Dirichlet form arising in the manifold setting.
However, as mentioned above, the energy form of a graph is not strongly local so that the notions of \cite{Stu94}
do not cover the graph setting.

For non-local Dirichlet forms, such as particular restrictions of the energy form of a graph, the concept of an intrinsic
metric was discussed in full generality in \cite{FLW14}, see also \cite{MU11} as well as
\cite{Fol11, Fol14, GHM12} for the related notion of an adapted metric. 
However, as noted in \cite{FLW14}, the concept
of an intrinsic metric for a non-local form is more complicated than in the local setting
as the maximum of two intrinsic metrics is not necessarily an intrinsic metric. This can already be seen
in an easy example of a graph with three vertices, see Example~6.3 in \cite{FLW14}. The fact that
the maximum of two intrinsic metrics is an intrinsic metric for strongly local Dirichlet forms is essential
to establish the existence of a maximal intrinsic metric.

Thus, there does not exist a maximal intrinsic metric for graphs.
However, for proving statements in graph theory which are analogous to the strongly local setting, 
the tool of an intrinsic metric is 
quite useful, see the survey article \cite{Kel15} for an overview of results in this direction and further
historical notes and also \cite{HH} for some further recent applications.

\subsection{Intrinsic metrics, combinatorial graph distance and boundedness}
After this brief discussion of the history of intrinsic metrics, we now present the definition for graphs.
We call a function mapping pairs of vertices to non-negative real numbers 
a \emph{pseudo metric} if the map is symmetric, vanishes
on the diagonal and satisfies the triangle inequality. In other words, a pseudo metric is a metric
except for the fact that it might be zero for pairs of distinct vertices.
In general, intrinsic metrics are only assumed to be pseudo metrics. However, we will follow
convention and refer to them as metrics in any case.

\begin{definition}[Intrinsic metrics, jump size]
A pseudo metric $\vr:X \times X \lra [0,\infty)$ is called an \emph{intrinsic metric}
if
$$\sum_{y \in X} b(x,y) \vr^2(x,y) \leq m(x)$$
for all $x \in X$.
The quantity $j = \sup_{x \sim y} \vr(x,y)$ is called the \emph{jump size} of $\vr$. 
When $j < \infty$, we say that $\vr$ has \emph{finite jump size}.
\end{definition}

The use of intrinsic metrics often lies in a scenario when we want to estimate the energy
of a cut-off function defined with respect to an intrinsic metric via the measure. In the easiest example,
let $\vr$ be an intrinsic metric and let $\rho(x) = \vr(x,x_0)$ be the distance with respect to $\vr$
to a fixed vertex $x_0$. If $K \subseteq X$, then
\begin{align*}
\sum_{x,y \in K} b(x,y)(\rho(x)-\rho(y))^2 &= \sum_{x,y \in K} b(x,y)(\vr(x,x_0)-\vr(y,x_0))^2\\
&\leq \sum_{x \in K} \sum_{y \in K} b(x,y)\vr^2(x,y)\\
&\leq \sum_{x \in K} m(x) = m(K).
\end{align*}
In particular, we see that if $K$ is a set with finite measure, then the energy of $\rho$ on $K$ is finite.
The jump size becomes relevant whenever we have a cut-off function which is supported on $K$
and we need to control how far outside of the set $K$ the sum above reaches.

After this brief discussion, let us mention some examples. We recall that $d$ denotes the combinatorial graph 
distance, that is, the least number of edges in a path connecting two vertices. The case of when the
combinatorial graph distance is equivalent to an intrinsic metric can be characterized in terms
of the boundedness of the weighted vertex degree.
\begin{proposition}\label{p:intrinsic}
Let $G$ be a connected weighted graph. The combinatorial graph distance $d$ is equivalent to an 
intrinsic metric if and only if $\Deg$ is a bounded function on $X$.
\end{proposition}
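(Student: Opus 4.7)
The plan is to prove the two directions separately, and both are short once we use the fact that the combinatorial distance satisfies $d(x,y)=1$ precisely when $x\sim y$ (and $b(x,y)=0$ otherwise), which collapses the defining sum in the intrinsic metric condition to a multiple of $\Deg(x)$.

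For the easy (``if'') direction, suppose $\Deg$ is bounded by some $D>0$ on $X$. I propose to exhibit a concrete intrinsic metric that is equivalent to $d$, namely $\vr(x,y)=d(x,y)/\sqrt{D}$. This is clearly a pseudo metric (in fact a metric) and obviously equivalent to $d$. To check it is intrinsic, I would fix $x\in X$ and compute
\begin{align*}
\sum_{y\in X} b(x,y)\vr^2(x,y)
&=\frac{1}{D}\sum_{y\sim x} b(x,y) \cdot 1
=\frac{m(x)\Deg(x)}{D}\leq m(x),
\end{align*}
where the first equality uses that the only nonzero terms come from $y\sim x$, in which case $d(x,y)=1$.

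For the converse (``only if'') direction, suppose $\vr$ is an intrinsic metric with $c_1 d(x,y)\leq \vr(x,y)\leq c_2 d(x,y)$ for some constants $c_1,c_2>0$ (only the lower bound is actually needed). For any $x\in X$, I would apply the intrinsic metric inequality and restrict the summation to neighbors, where $d(x,y)=1$:
\begin{align*}
m(x)\geq\sum_{y\in X} b(x,y)\vr^2(x,y)
\geq c_1^2 \sum_{y\in X} b(x,y) d^2(x,y)
= c_1^2 \sum_{y\sim x} b(x,y)
= c_1^2\, m(x)\Deg(x).
\end{align*}
Dividing through by $m(x)>0$ yields $\Deg(x)\leq 1/c_1^2$, uniformly in $x$.

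There isn't really a main obstacle; the whole point is that because $b(x,y)=0$ unless $x\sim y$, the combinatorial distance only ``sees'' neighbors in the intrinsic metric sum, and the intrinsic condition then reads exactly as a pointwise bound on $\Deg$. The only thing to be careful about is reading off the right direction of the bi-Lipschitz inequality (the lower bound $c_1 d\leq \vr$ is what is used to derive boundedness of $\Deg$, while boundedness of $\Deg$ lets us construct an intrinsic $\vr$ that is bi-Lipschitz to $d$).
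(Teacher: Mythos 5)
Your proof is correct and follows essentially the same route as the paper, whose entire argument is the identity $\sum_{y\in X} b(x,y)d^2(x,y)=\Deg(x)m(x)$ followed by ``the conclusion now follows directly''; you have simply written out the two directions that the paper leaves implicit. Both directions are handled correctly, including the observation that only the lower bi-Lipschitz bound is needed for the ``only if'' direction.
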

\begin{proof}
We note that $d(x,y)=1$ for all $x \sim y$. Thus,
$$\sum_{y \in X} b(x,y)d^2(x,y) = \sum_{y \in X} b(x,y) = \Deg(x)m(x).$$
The conclusion now follows directly.
\end{proof}
Recall that by Theorem~\ref{t:boundedness_l2} this is the case exactly when the Laplacian is a bounded
operator and by Theorem~\ref{t:sc_symmetry}, the graph is stochastically complete in this case. 

Thus, we see that the combinatorial graph distance may or may not be intrinsic. We now look
at some further examples. In particular, the first example below gives a case when the combinatorial
graph distance is in fact intrinsic and the second gives a pseudo metric which is intrinsic for any given graph.
\begin{example}[Intrinsic metrics]\label{ex:intrinsic}
We now give two examples.
\begin{enumerate}
\item If $m(x)=\sum_{y\in X}b(x,y)$, then $\Deg(x)=1$ for all $x \in X$ and thus the combinatorial
graph distance $d$ is equivalent to an intrinsic metric by Proposition~\ref{p:intrinsic} directly above.
\item For a pair of neighboring vertices $x \sim y$ we let 
$$\si(x,y) = \left( \max \{ \Deg(x), \Deg(y) \} \right)^{-1/2}$$
denote the length of the edge connecting $x$ and $y$. Now, we can extend from the length of an edge to the
length of a path in a natural way, that is, if $(x_k)=(x_k)_{k=0}^n$ is a path, we let
$$l_\si((x_k)) = \sum_{k=0}^{n-1} \si(x_k, x_{k+1})$$
denote the length of the path.
Finally, we define a pseudo metric via
$$\vr_\si(x,y) = \inf \{ l_\si((x_k)) \ | \ (x_k) \textup{ is a path connecting } x \textup{ and } y\}.$$
As $\vr_\si(x,y) \leq \si(x,y)$ for $x \sim y$ it is then clear that $\vr_\si$ is an intrinsic metric as
$$\sum_{y \in X} b(x,y) \vr_\si^2(x,y) \leq \frac{1}{\Deg(x)} \sum_{y \in X} b(x,y) =m(x).$$
This metric was first introduced in \cite{Hua11b}, see also \cite{Fol11}. 

We note that if the graph is not
locally finite, then this intrinsic metric may be only a pseudo metric; however, in the locally finite case,
path metrics are metrics and give the discrete topology, see for example Lemma~A.3 in \cite{HKMW13}.
\end{enumerate}
\end{example}

The metric $\vr_\si$ introduced in the second example above shows that there always exists an intrinsic metric on a weighted
graph. This metric, which utilizes the weighted vertex degree function, makes sense in the context
of the process determined by the heat kernel. Namely, if the Markov process with transition probabilities
given by the heat kernel is a vertex $x$, then at the next jump time it moves with probability $b(x,y)/\sum_{z}b(x,z)$
to a neighbor $y$ of $x$. Furthermore, the wait time at the vertex $x$ is an exponentially distributed
random variable with parameter given by the weighted vertex degree, that is, the probability that 
the random walker is still at a vertex $x$ after time $t$ without having jumped is given by $e^{-\Deg(x)t}$. 
See Section~7 in \cite{KL10} for a further discussion of the connection between the heat semigroup and Markov
processes.
Therefore, at vertices with a large vertex degree, the process accelerates and thus will more quickly explore
neighboring vertices. Hence, for the process, neighbors of vertices of large vertex degree are close
which is consistent with the values of $\vr_\si$.

We note that given an intrinsic metric $\vr$, we can always obtain an intrinsic metric of small
jump size merely by cutting from above. That is, if $\vr$ is an intrinsic metric and $C>0$, then 
$$\vr_C(x,y) = \min \{ \vr(x,y), C\}$$
is also intrinsic with jump size at most $C$. On the other hand, having a uniform lower bound from below
on the distance between neighbors is equivalent to bounded geometry as we now show.
\begin{proposition}\label{p:intrinsic_2}
Let $G$ be a weighted graph. There exists an intrinsic metric $\vr$ such that $\vr(x,y) \geq C>0$
for all $x \sim y$ if and only if $\Deg$ is a bounded function on $X$.
\end{proposition}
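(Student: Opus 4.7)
The proposition is an if-and-only-if whose two directions are both short computations directly from the definition of an intrinsic metric, so I expect no real obstacle beyond bookkeeping.

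For the forward direction, I would assume the existence of an intrinsic metric $\vr$ with $\vr(x,y) \geq C>0$ for all $x \sim y$, and apply the defining inequality at an arbitrary $x \in X$. Since $b(x,y) = 0$ when $y \not\sim x$, the left-hand sum is supported on neighbors, where the lower bound $\vr(x,y) \geq C$ applies, giving
\begin{align*}
m(x) \;\geq\; \sum_{y \in X} b(x,y)\vr^2(x,y) \;\geq\; C^2 \sum_{y \sim x} b(x,y) \;=\; C^2 m(x) \Deg(x).
\end{align*}
Dividing by $m(x) > 0$ yields $\Deg(x) \leq C^{-2}$ uniformly in $x$, so $\Deg$ is bounded.

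For the converse, I would assume $\Deg(x) \leq D$ for all $x$ and reuse the intrinsic metric $\vr_\si$ constructed in Example~\ref{ex:intrinsic}(2) from the edge lengths $\si(x,y) = (\max\{\Deg(x),\Deg(y)\})^{-1/2}$. Under the uniform bound $\Deg \leq D$, every edge satisfies $\si(x,y) \geq 1/\sqrt{D}$; hence any path $(x_k)_{k=0}^n$ between two distinct vertices has $n \geq 1$ and therefore $l_\si((x_k)) \geq 1/\sqrt{D}$. Taking the infimum over all such paths yields $\vr_\si(x,y) \geq 1/\sqrt{D}$ for all $x \sim y$, so $\vr_\si$ is the required intrinsic metric with $C = 1/\sqrt{D}$.

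The one mildly subtle point is that the path construction of $\vr_\si$ implicitly assumes connectedness; for a disconnected graph, the argument will be applied within each connected component, which suffices because the intrinsic inequality at $x$ only constrains $\vr(x,y)$ for $y$ with $b(x,y)>0$, and such $y$ necessarily lie in the same component as $x$.
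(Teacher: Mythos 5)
Your proof is correct. The forward direction is exactly the paper's argument: apply the defining inequality of an intrinsic metric at $x$, use $\vr(x,y)\geq C$ on neighbors, and divide by $m(x)$ to get $\Deg(x)\leq C^{-2}$. In the converse you take a slightly different route: the paper invokes Proposition~\ref{p:intrinsic} and uses the globally rescaled combinatorial distance (essentially $d/\sqrt{\sup \Deg}$, together with the lower bound $Cd\leq\vr$) as the witness, whereas you reuse the path metric $\vr_\si$ of Example~\ref{ex:intrinsic}(2) and note that a uniform bound $\Deg\leq D$ forces every edge length $\si(x,y)$, and hence every path between distinct vertices, to have length at least $1/\sqrt{D}$. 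Both witnesses are valid and yield the same constant $1/\sqrt{D}$; the paper's choice is marginally more economical because it recycles an already-proved proposition instead of re-examining the path construction. Your remark about disconnected graphs is fair, and in fact the paper's own proof has the same harmless gap (Proposition~\ref{p:intrinsic} is stated only for connected graphs); if one insists on a pseudo metric defined on all of $X$, one can cap the componentwise metric from above by a constant and assign that constant across components, which preserves the triangle inequality, the intrinsic property, and the uniform lower bound on neighbors.
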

\begin{proof}
If $\vr$ is an intrinsic metric with $\vr(x,y) \geq C>0$ for all $x\sim y$, then
$$C^2 \Deg(x)m(x) \leq \sum_{y \in X} b(x,y)\vr^2(x,y) \leq m(x)$$
so that $\Deg$ is bounded. 
Conversely, if $\Deg$ is bounded, then by Proposition~\ref{p:intrinsic},
it follows that the combinatorial graph distance is equivalent to an intrinsic metric $\vr$. In particular, there
exists a constant $C>0$ such that $C d(x,y) \leq \vr(x,y)$ for an intrinsic metric $\vr$. As $d(x,y)=1$
for all $x \sim y$, the conclusion follows.
\end{proof}

Thus, we obtain two conditions involving the existence of intrinsic metrics which imply stochastic completeness.
\begin{corollary}[Stochastic completeness and intrinsic metrics]
Let $G$ be a connected weighed graph. If either the combinatorial graph distance is equivalent
to an intrinsic metric or if there exists an intrinsic metric which is uniformly bounded below
on neighbors, then $G$ is stochastically complete.
\end{corollary}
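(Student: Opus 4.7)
The plan is to reduce both hypotheses to a single, already-familiar condition, namely boundedness of the weighted vertex degree $\Deg$, and then invoke the stochastic completeness result from Theorem~\ref{t:bounded_SC}. The two propositions immediately preceding the corollary are tailor-made for this reduction, so the argument should be essentially a two-line synthesis.

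First, I would dispose of the first hypothesis. If the combinatorial graph distance $d$ is equivalent to an intrinsic metric, then Proposition~\ref{p:intrinsic} applies directly and yields that $\Deg$ is bounded on $X$. Next, for the second hypothesis, suppose there is an intrinsic metric $\vr$ with $\vr(x,y)\geq C > 0$ for all $x\sim y$. This is exactly the hypothesis of Proposition~\ref{p:intrinsic_2}, which again yields that $\Deg$ is bounded on $X$.

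Once boundedness of $\Deg$ has been secured in either case, Theorem~\ref{t:bounded_SC} finishes the proof: since $\Deg$ bounded trivially implies $\sum_{n=0}^\infty 1/\Deg(x_n) = \infty$ along every infinite path, the graph $G$ is stochastically complete.

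I do not expect any real obstacle here, since both hypotheses have already been characterized in terms of $\Deg$ being bounded by the two propositions cited, and the bridge from bounded $\Deg$ to stochastic completeness is exactly the content of Theorem~\ref{t:bounded_SC}. The only minor care-point is that Proposition~\ref{p:intrinsic} requires the graph to be connected, which is already part of the hypothesis of the corollary; and that Proposition~\ref{p:intrinsic_2} does not require connectedness, so the second case needs no additional comment. Thus the entire proof is effectively a citation chain.
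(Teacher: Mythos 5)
Your proof is correct and is exactly the paper's argument: both hypotheses are reduced to boundedness of $\Deg$ via Propositions~\ref{p:intrinsic} and~\ref{p:intrinsic_2}, and Theorem~\ref{t:bounded_SC} then gives stochastic completeness. Nothing further is needed.
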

\begin{proof}
This follows immediately by combining Propositions~\ref{p:intrinsic}~and~\ref{p:intrinsic_2}
with Theorem~\ref{t:bounded_SC}. 
\end{proof}

\subsection{A word about essential self-adjointness and metric completeness}\label{ss:metric_esa}
We briefly mention here some additional facts about metrics, geometry and analysis.
In Riemannian geometry, there is the famous Hopf-Rinow theorem, which gives a connection between
metric completeness, geodesic completeness and compactness of balls defined with respect to the geodesic metric,
see \cite{doCar92} for example.
For locally finite graphs, a counterpart is shown in \cite{HKMW13}, 
see also \cite{KM19} for a recent extension to a more general class of graphs. More specifically,
Theorem A.1 in \cite{HKMW13} shows that
for locally finite graphs and path metrics, the notions of metric completeness, geodesic completeness
in the sense that all infinite geodesics have infinite length, and finiteness of balls are equivalent.
Furthermore, if an intrinsic path metric on a locally finite graph
satisfies any of these equivalent conditions, then the restriction of the formal
Laplacian to the finitely supported functions is essentially self-adjoint, see Theorem~2 in \cite{HKMW13}. 
Thus, metric completeness with respect to an intrinsic metric
implies that there exists a unique Laplacian, at least for locally finite graphs. 
This corresponds to results known for Riemannian manifolds, see \cite{Che73, Stri83}.
For a more general and thorough discussion which includes this question for magnetic Schr{\"o}dinger operators on graphs
see \cite{Schm20}.

Subsequently, the assumption that there exists an intrinsic metric for which balls are finite has
often been used as a substitute for geodesic completeness in the graph setting. In particular, we will
see this assumption appearing in our criteria for stochastic completeness in the following sections.

\section{Uniqueness class, stochastic completeness and volume growth}\label{s:volume}
In this section, we will discuss the connections between uniqueness class results for the heat
equation, stochastic completeness and volume growth. As we have seen, using the combinatorial
graph metric gives a very different volume growth borderline for stochastic completeness compared to the manifold
setting. We will see that the results in these two settings can ultimately be reconciled via the use of intrinsic metrics.

\subsection{Uniqueness class}
We start by recalling Grigor$'$yan's uniqueness
class result on Riemannian manifolds. 
Specifically, if $u$ is a solution of the heat equation on a Riemannian manifold with zero
initial condition and if there
exists a monotone increasing function $f$ on $(0,\infty)$ such that $\int^\infty {r}/{f(r)}dr=\infty$
which dominates the growth of $u$ in the sense that
for all $r$ large
$$\int_0^T \int_{B_r} u^2(x,t)\  d\mu \ dt \leq e^{f(r)},$$
then $u=0$, see Theorem~9.2 in \cite{Gri99} or Theorem~11.9 in \cite{Gri09} for a proof. 
By Theorem~\ref{t:sc_characterizations} above
this immediately implies stochastic completeness under a suitable volume growth restriction.
More precisely, if $u$ is a bounded solution of the heat equation
with bound given by $C$, then letting $f(r)=\log(C^2 T V(r))$ shows that $u$ must be zero
whenever
$$\int^\infty \frac{r}{\log V(r)}dr=\infty.$$
Thus, the only bounded solution of the heat equation with trivial initial condition is trivial.

However, in \cite{Hua11b, Hua12}, there is already a counterexample to an analogue of this 
uniqueness class result when using intrinsic metrics. Namely, for the graph with
$X=\Z$, 
$$b(x,y)=\begin{cases}
1 & \textup{ if } |x-y|=1 \\
0 & \textup{ otherwise}
\end{cases}$$
and counting measure $m=1$, there exists an explicit function $u$ which is non-zero, satisfies the
heat equation with initial condition $0$ as well as the estimate
$$\int_0^T \sum_{x \in B_r}u^2(x,t) dt \leq  e^{f(r)}$$
for all large $r$ with $f(r)=\log T + C r \log r$ for some constant $C$. 
In particular, it is clear that
$$\int^\infty \frac{r}{f(r)}dr=\infty.$$
Thus, no analogue to Grigor$'$yan's uniqueness class result can hold for all graphs, even when using
intrinsic metrics.

We note that for this graph $\Deg(x)=2$ for all $x \in X$
and thus this graph is stochastically complete by Theorem~\ref{t:bounded_SC}. Therefore, this non-zero solution
cannot be bounded by Theorem~\ref{t:sc_characterizations}. Furthermore, we note by Proposition~\ref{p:intrinsic}
that the combinatorial graph metric is equivalent to an intrinsic metric in this case and that the volume
growth with respect to this metric is only quadratic. Finally, the constant $C$ appearing in the definition of $f$ 
in the example above is crucial
as for $C<1/2$, the uniqueness class result holds for all graphs, see Theorem~0.8 in \cite{Hua12}. 

The difference between these results in the discrete and continuous settings 
was ultimately resolved in \cite{HKS} by introducing a class of graphs for which a uniqueness
class result analogous to Grigor$'$yan's does hold.
These are the globally local
graphs which we introduce next. The idea for these graphs in the context of
a uniqueness class result is that the growth of the solution of the heat equation is balanced
by a decay in the jump size of an intrinsic metric as we go further out in the graph.
We note that in the counter example to the uniqueness class result above
we use the combinatorial graph distance whose jump size is always one and thus does not decay.

\begin{definition}[Globally local graphs]
Let $G$ be a weighted graph with a pseudo metric $\rho$. Let $B_r$ denote the ball of radius
$r$ with respect to $\rho$ and let $j_r$ denote the jump size of $\rho$ outside of $B_r$, that is,
$$j_r = \sup \{\rho(x,y) \ | \ x \sim y, \ x,y \not \in B_r\}.$$
$G$ is said to be \emph{globally local} in $\rho$ with respect to a monotone increasing function $f:(0,\infty) \lra (0,\infty)$
if $G$ has finite jump size, i.e., $j_0<\infty$ and if there exists a constant $A>1$ such that
$$\limsup_{r \to \infty} \frac{j_r f(Ar)}{r} < \infty.$$
\end{definition}

Thus, globally local graphs not only have finite jump size but provided that $f$ has a certain growth,
the jump size must decay outside of balls. In the borderline case for the uniqueness class result, 
$f$ is of the order $r \log r$ so that
$j_r$ must take care of the growth of $\log r$. We note, in particular, that the example mentioned above,
that the combinatorial metric used there as an intrinsic metric will not be globally local
with respect to $f(r)=r \log r$.

In any case, with this notion of globally local graphs, Theorem~1.3 in \cite{HKS} presents the following result.
\begin{theorem}[Uniqueness class for globally local graphs]\label{t:uniqueness_class}
Let $G$ be a weighted graph. Let $\vr$ be an intrinsic metric with finite balls $B_r^\vr$ and assume
that $G$ is globally local in $\vr$ with respect to a monotone increasing function $f:(0,\infty) \lra (0,\infty)$
such that
$$\int^\infty \frac{r}{f(r)}dr=\infty.$$
If $u:X \times [0,T] \lra \R$ is a solution of the heat equation with initial condition $0$ and
$$\int_0^T \sum_{x \in B_r^\vr} u^2(x,t) m(x) dt \leq e^{f(r)}$$
for all $r >0$, then $u=0$.
\end{theorem}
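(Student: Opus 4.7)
The plan is to adapt Grigor\'yan's classical manifold argument (see Theorem~11.9 in \cite{Gri09}) to the non-local graph setting, with the globally local hypothesis playing the role of taming the boundary jump terms that have no analog in the local case. At the heart lies a Caccioppoli-type inequality that, iterated along a carefully chosen sequence of space-time slabs, forces any solution satisfying the $L^2$-growth bound to vanish.

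The first step is to construct and exploit a Lipschitz cutoff with respect to $\vr$. For $0\le r<R$, take
$$\eta_{r,R}(x) = \min\left\{1,\frac{(R-\vr(x,x_0))_+}{R-r}\right\},$$
which equals $1$ on $B_r^\vr$, is supported in $B_R^\vr$, and satisfies $|\eta_{r,R}(x)-\eta_{r,R}(y)|\le\vr(x,y)/(R-r)$. The intrinsic property of $\vr$ immediately yields
$$\sum_y b(x,y)(\eta_{r,R}(x)-\eta_{r,R}(y))^2 \le \frac{m(x)}{(R-r)^2}.$$
Since $B_R^\vr$ is finite, $u\eta_{r,R}^2(\cdot,t)$ is finitely supported and Green's formula applies. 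Testing the heat equation against this function and using the standard product identity decoupling $\eta$ from $u$ to absorb the resulting energy term should yield a Caccioppoli-type bound of the schematic form
$$\pt\sum_x u^2(x,t)\eta_{r,R}^2(x)m(x) \le \frac{C}{(R-r)^2}\sum_{x\in B_{R+j_r}^\vr} u^2(x,t)\,m(x),$$
where the enlargement $R\mapsto R+j_r$ on the right arises because jumps crossing the boundary of $\mathrm{supp}\,\eta_{r,R}$ may reach at most a distance $j_r$ outside $B_R^\vr$.

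The iteration is Grigor\'yan-style. Pick a sequence $0 = t_0 < t_1 < \cdots < t_N = T$ and radii $r_0 < r_1 < \cdots < r_N$ and integrate the Caccioppoli inequality on each slab $[t_k,t_{k+1}]$ with cutoff $\eta_{r_k,r_{k+1}}$. Reorganizing yields a recursion in the tail quantities
$$I_k = \int_{t_k}^T\sum_{x\in B_{r_k}^\vr} u^2(x,s)\,m(x)\,ds,$$
roughly of the form $I_k \le C(t_{k+1}-t_k)(r_{k+1}-r_k)^{-2}\,I_{k+1}$, with boundary contributions that telescope away. After $N$ steps one obtains $I_0 \le \bigl(\prod_{k<N} C(t_{k+1}-t_k)(r_{k+1}-r_k)^{-2}\bigr)\,e^{f(r_N)}$. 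Choosing the radius increments $r_{k+1}-r_k$ proportional to $r_k/\sqrt{f(r_k)}$ and the time slabs to match, the hypothesis $\int^\infty r/f(r)\,dr=\infty$ allows $r_N\to\infty$ within total time $T$, and an elementary calculation shows that the prefactor decays fast enough to drive the right-hand side to zero. Hence $I_0 = 0$ on any fixed initial ball $B_{r_0}^\vr$; since $r_0$ is arbitrary, $u\equiv 0$.

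The main obstacle, and the reason a direct analog of Grigor\'yan's proof fails without the globally local assumption, is the jump overshoot $R\mapsto R+j_r$ in the Caccioppoli step. If $j_r$ does not decay sufficiently relative to $r$, the iteration demands radii that grow too quickly, and the divergence $\sum_k (r_{k+1}-r_k)=\infty$ required to reach infinity in finite time cannot coexist with absorbing the enlargement $j_{r_k}$ at each stage. The hypothesis $\limsup j_r f(Ar)/r<\infty$ is calibrated precisely so that $r_{k+1}-r_k\sim r_k/\sqrt{f(r_k)}$ dominates $j_{r_k}$, keeping the recursion meaningful. Huang's counterexample on $\Z$ with the combinatorial metric, for which the jump size is identically $1$ against the critical growth $f(r)\sim r\log r$, shows this calibration to be sharp: there the iteration genuinely cannot close, and indeed a nontrivial solution exists.
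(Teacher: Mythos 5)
Your overall strategy --- intrinsic-metric cutoffs satisfying $\sum_y b(x,y)(\eta(x)-\eta(y))^2\le m(x)/(R-r)^2$, a Caccioppoli-type energy estimate, and a Grigor$'$yan-style iteration in which the globally local hypothesis tames the jump overshoot $R\mapsto R+j_r$ --- is exactly the strategy of the actual proof in Section~2 of \cite{HKS}; the survey itself only describes this idea in one paragraph and refers there for details. Your explanation of why the jump size must decay against $f$, and your identification of Huang's example on $\Z$ as the obstruction to dropping the globally local hypothesis, are also correct.

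However, the central iteration step is wrong as stated, and the error is not cosmetic. The recursion $I_k \le C(t_{k+1}-t_k)(r_{k+1}-r_k)^{-2} I_{k+1}$ followed by $I_0 \le \bigl(\prod_{k<N} C(t_{k+1}-t_k)(r_{k+1}-r_k)^{-2}\bigr)e^{f(r_N)}$ never uses the initial condition $u(\cdot,0)=0$; if it were valid it would show that \emph{every} solution satisfying the $L^2$-growth bound vanishes. But the constant solution $u\equiv 1$ on $\Z^2$ with standard weights and counting measure satisfies all your hypotheses with $f(r)\sim C\log r$ (the rescaled combinatorial metric is intrinsic with finite balls, the graph is globally local for such $f$, and $\int^\infty r/f(r)\,dr=\infty$), so your chain of inequalities would force $1=0$. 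The correct key lemma, both in Grigor$'$yan's manifold argument and in \cite{HKS}, is \emph{additive}, not multiplicative: it bounds $\sum_{B_r}u^2(\cdot,t)\,m$ by $\sum_{B_R}u^2(\cdot,\tau)\,m$ at an \emph{earlier} time $\tau<t$ plus an error of the schematic form $\frac{C}{(R-r)^2}\exp\bigl(f(R)-\frac{(R-r)^2}{C(t-\tau)}\bigr)$. The whole point is the competition between the Gaussian gain $\exp(-(R-r)^2/(C(t-\tau)))$ and the growth $e^{f(R)}$: one takes $t-\tau\sim (R-r)^2/f(R)$ so each error is summably small, iterates backward in time with geometrically growing radii until time $0$ is reached --- which is where $u(\cdot,0)=0$ finally enters --- and the hypothesis $\int^\infty r/f(r)\,dr=\infty$ appears precisely as the divergence of $\sum_k (r_{k+1}-r_k)^2/f(r_{k+1})$, guaranteeing the backward time steps exhaust $[0,t]$. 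Your product of prefactors $C\Delta t_k(\Delta r_k)^{-2}$ contains no mechanism to defeat $e^{f(r_N)}$, and the choice $\Delta r_k\sim r_k/\sqrt{f(r_k)}$ does not connect to the integral condition. Finally, as the survey notes, the discrete setting lacks a chain rule, so the Gaussian weight cannot be produced by differentiating $e^{\xi(x,t)}$ as in the manifold proof and must be built into the test functions by hand; this is the technically long part of \cite{HKS} and your sketch does not address it.
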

The proof of Theorem~\ref{t:uniqueness_class} can be found in Section~2 of \cite{HKS}. Though rather
long and technical, the main
idea is to estimate the size of a solution of the heat equation over a small ball at some time via the size of the
solution over a larger ball at an earlier time. Then one iterates this estimate down to time zero to show
that the solution must be trivial. Along the way, the use of cut-off functions involving intrinsic metrics
is crucial. This is, in part, because of the fact that in the discrete setting there is no good substitute
for the chain rule which is used throughout the proof of Grigor$'$yan's uniqueness class
result for manifolds.

\subsection{Stochastic completeness and volume growth in intrinsic metrics}
We want to use the uniqueness class result above to establish stochastic completeness under a volume
growth restriction which is valid for all graphs which allow for an intrinsic metric with finite distance balls. 
However, we note that the uniqueness class result above involves the additional assumption of being
globally local. Thus, some additional considerations are required in order
to reduce from general graphs to the class of globally local ones.

As a first step, it turns out that one can reduce to the case of an intrinsic metric with finite jump size 
via the notion of truncating the edge weights which
is already contained in \cite{GHM12}, see also \cite{MU11}. 
More specifically, if $\vr$ is an intrinsic metric for $G=(X,b,m)$, we define
new edge weights on $X$ via 
$$b_s(x,y) = \begin{cases}
b(x,y) & \textup{ if } \vr(x,y) \leq s \\
0 & \textup{ otherwise.}
\end{cases}$$
That is, we remove edges for which the distance between the adjacent vertices is large.
It follows that $\vr$ is also intrinsic for $G_s=(X,b_s,m)$ and $\vr$ now has finite jump size 
of at most $s$ on $G_s$.
Furthermore, Lemma~3.4 in \cite{HKS} gives that if $G_s$ is stochastically complete, then $G$ is stochastically
complete. We note that $G_s$ is not necessarily connected even if we start with a connected graph; 
however, the equivalent notions of stochastic 
completeness presented in Theorem~\ref{t:sc_characterizations} do not require connectedness of the graph.
As an alternate viewpoint, one may apply them on connected components of the graph. 
In particular, Lemma~3.4 in \cite{HKS}
uses the weak Omori-Yau characterization of stochastic completeness, that is, condition (iv) in 
Theorem~\ref{t:sc_characterizations} above to establish the result. 
See also Theorem~2.2 in \cite{GHM12} for a more general
statement involving Dirichlet forms associated to general jump processes. 

Thus, without loss of generality, we may assume that the intrinsic metric has finite jump size. 
The assumption of finite jump size along with finiteness of balls is easily seen to imply
local finiteness of the graph, see, for example Lemma~3.5 in \cite{Kel15}. Thus, we have reduced
to the case of locally finite graphs with finite jump size and finite distance balls.

Finally to reduce to the case of globally local graphs, the authors of \cite{HKS} use the notion of refinements
for locally finite graphs found in \cite{HS14}. The idea is to insert additional vertices within edges and extend the definitions
of the edge weights, vertex measure and the intrinsic metric in such a way that both the finiteness of balls
is preserved and that the measure of balls is only rescaled by a constant. Furthermore, as the inserted vertices
are now closer together with respect to the new intrinsic metric, it follows by Lemma~3.3 in \cite{HKS} 
that this can be done in such a way that
the refined graph is globally local with respect to an arbitrarily chosen function. Finally, Theorem~1.5 in \cite{HKS}
shows that stochastic completeness is preserved during the process of refining the graph via the use
of the weak Omori-Yau maximum principle.

Putting everything together, we get the following analogue to Grigor$'$yan's volume growth result which
can be found as Theorem~1.1 in \cite{HKS}.
\begin{theorem}[Volume growth and stochastic completeness]\label{t:sc_volume1}
Let $G$ be a weighted graph with an intrinsic metric $\vr$ with finite distance balls $B_r^\vr$.
Let $V_\vr(r) = m(B_r^\vr)$ and let $\log^{\#}(x) = \max\{\log(x), 1\}$. If
$$\int^\infty \frac{r}{\log^{\#} V_\vr(r)}dr =\infty,$$
then $G$ is stochastically complete.
\end{theorem}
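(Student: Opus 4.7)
The plan is to reduce the theorem to an application of the uniqueness class result Theorem~\ref{t:uniqueness_class}, via the characterization of stochastic completeness as uniqueness of bounded solutions of the heat equation. By Theorem~\ref{t:sc_characterizations}~(ii$'$), it suffices to show that any bounded solution $u$ of the heat equation on $G$ with $u_0 \equiv 0$ must vanish on $X \times [0,T]$ for every $T>0$. Given such a $u$ with $|u|\leq C$, the natural choice of growth function is $f(r)=\log^{\#}(C^2 T V_\vr(r))$, which is monotone increasing, satisfies $\int^\infty r/f(r)\,dr=\infty$ by the hypothesis (the multiplicative constant $C^2 T$ contributing only an additive constant to $\log^{\#} V_\vr(r)$ for large $r$), and dominates $u$ via
$$\int_0^T \sum_{x \in B_r^\vr} u^2(x,t)\, m(x)\, dt \leq C^2 T V_\vr(r) \leq e^{f(r)}.$$
If $G$ were globally local in $\vr$ with respect to $f$, Theorem~\ref{t:uniqueness_class} would give $u\equiv 0$ at once.

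In general, however, $\vr$ need not have finite jump size, and even when it does $G$ need not be globally local with respect to this $f$. I would address these in two successive reductions, each of which preserves the volume growth hypothesis and transfers stochastic completeness back to $G$. First, fix $s>0$ and truncate by setting $b_s(x,y)=b(x,y)$ when $\vr(x,y)\leq s$ and $b_s(x,y)=0$ otherwise; then $\vr$ is still intrinsic for $G_s=(X,b_s,m)$ with jump size at most $s$, the measure $m$ and hence $V_\vr$ are unchanged, and by Lemma~3.4 in \cite{HKS} stochastic completeness of $G_s$ implies that of $G$. Finite jump size together with finite balls additionally forces $G_s$ to be locally finite, which is required for the refinement step.

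Second, refine $G_s$ in the sense of \cite{HS14} by inserting additional vertices along edges and extending $b_s$, $m$ and $\vr$ so that the refined graph $\widetilde G$ has intrinsic distances between adjacent vertices decaying fast enough at infinity to be globally local in $\vr$ with respect to $f$, while the measure of balls is only rescaled by a controlled constant, so that the integral condition on $V_\vr$ persists in $\widetilde G$. That such a refinement can be arranged to be globally local with respect to a prescribed function is Lemma~3.3 in \cite{HKS}, and that stochastic completeness passes from $\widetilde G$ back to $G_s$ is Theorem~1.5 in \cite{HKS}. Since $\widetilde G$ now satisfies every hypothesis of Theorem~\ref{t:uniqueness_class}, the lifted bounded solution, and hence the original $u$, must vanish.

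The main obstacle here is not any individual reduction but their compatibility: the refinement has to be globally local with respect to the growth function $f$ coming from the volume of the \emph{original} graph, while simultaneously preserving finiteness of balls and the essential form of the integral condition on $V_\vr$. This is the technical heart of \cite{HKS}, and the argument above delegates that work to Theorem~\ref{t:uniqueness_class} together with the two transfer results, leaving only the organizational task of arranging the reductions in the correct order.
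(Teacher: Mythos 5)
Your proposal is correct and follows essentially the same route as the paper: reduce via Theorem~\ref{t:sc_characterizations}~(ii$'$) to uniqueness of bounded solutions, truncate the edge weights to get finite jump size (hence local finiteness) using Lemma~3.4 of \cite{HKS}, refine to a globally local graph via Lemma~3.3 and Theorem~1.5 of \cite{HKS}, and then apply Theorem~\ref{t:uniqueness_class} with $f$ essentially equal to $\log^{\#}V_\vr$ up to an additive constant. The only cosmetic difference is that you absorb the bound $C^2T$ inside the $\log^{\#}$ rather than as an additive constant, which changes nothing.
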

\begin{proof}[Sketch of proof]
From the discussion above, we can reduce to the case of finite jump size and finite balls and, thus, 
to locally finite graphs. In this case, the technique of refinements allows us to reduce to the case of graphs which are
globally local with respect to $f(r)=\log^{\#}V_\vr(r)$. Thus, given a bounded solution of the heat
equation $u$ with initial condition 0 and bound $C$, we obtain
$$\int_0^T \sum_{x \in B_r^\vr}u^2(x,t) m(x) dt \leq C^2T e^{f(r)} = e^{f(r)+C_1}$$
for some constant $C_1$.
Therefore, by Theorem~\ref{t:uniqueness_class}, we obtain that $u=0$ and by Theorem~\ref{t:sc_characterizations}~(ii$'$)
we get that $G$ is stochastically complete.
\end{proof}

\begin{remark}
\begin{enumerate}
\item We note that the use of $\log^{\#}$ instead of just $\log$ is to deal with the case when the measure
$m$ is small. In particular, this covers the case when the entire vertex set has finite measure, that
is, $m(X)<\infty$.
In this case, stochastic completeness is actually equivalent to two other properties, namely, 
to recurrence and to form uniqueness, see Theorem~16
in \cite{Schm17} or Theorem~7.1 in \cite{GHKLW15} for further details.  Thus, we see that in the case of finite measure,
the existence of an intrinsic metric which gives finite balls implies all three of these properties. A partial
converse to this result was recently proven in \cite{Puch}. More specifically, if a graph is recurrent, then 
there exists a finite measure and an intrinsic metric which has finite distance balls.
For a precise statement, see Theorem~11.6.15 in \cite{Schm20}.

\item In the case of locally finite graphs with counting measure and an intrinsic metric with finite jump size
and finite balls, 
Theorem~\ref{t:sc_volume1} was first
shown as Theorem~1.2 in \cite{Fol14}. However, not only does the formulation have additional assumptions
on the graph,
but the proof is quite different from Grigor$'$yan's original proof on manifolds. Namely, the approach in \cite{Fol14} is to synchronize the random
walk on the discrete graph with a random walk on a metric graph. Metric graphs are graphs where edges are 
intervals of real numbers. In particular, the energy form on a metric graph is a strongly local Dirichlet form. 
Thus, the extension of Grigor$'$yan's result to strongly local Dirichlet forms shown in \cite{Stu94} implies
stochastic completeness given that the volume growth of the discrete graph is comparable to the volume
growth of the metric graph. We note that the proof in \cite{Stu94} also does not invoke the heat equation.

An analytic proof of the result in \cite{Fol14} using the Omori-Yau maximum principle and 
which allows for an arbitrary vertex measure but still uses metric graphs and assumes local finiteness
and finite jump size can be found in \cite{Hua14b}. Thus,
while an analogue of Grigor$'$yan's result was known for some classes of graphs since \cite{Fol14}, the paper
\cite{HKS} contains the first proof which does not assume finite jump size nor local finiteness and does not
invoke metric graphs. 
\end{enumerate}
\end{remark}

\subsection{Stochastic completeness and volume growth in the combinatorial graph metric} 
We now briefly discuss how in the case of standard edge weights and counting measure the paper \cite{GHM12}
already contains the optimal growth rate result for stochastic completeness when using the combinatorial
graph distance. More specifically, the authors of \cite{GHM12} first use the method of \cite{Dav85} from the
strongly local setting to establish that the volume growth condition
$$\liminf_{r \to \infty} \frac{\log V_{\vr_1}(r)}{r \log r}<\frac{1}{2}$$
implies stochastic completeness of general jump processes, see also \cite{MUW12}
where it is shown that the $1/2$ on the right hand side can be replaced by $\infty$. 
Here, the volume growth is defined with
respect to what the authors of \cite{GHM12} call an \emph{adapted} metric. 
The idea for an adapted metric in \cite{GHM12} is that the intrinsic condition
needs to be only satisfied for pairs of vertices that are close with respect to $\vr$, that is, 
one truncates the metric before imposing the intrinsic condition.
More specifically, letting $\vr$ be a pseudo metric and $\vr_1 =\min \{ \vr, 1\}$, then
$\vr_1$ must satisfy
$$\sum_{y \in X}b(x,y)\vr_1^2(x,y) \leq m(x)$$
for all $x \in X$ in order for $\vr$ to be called adapted.

Clearly any intrinsic metric is adapted. 
On the other hand, we can also modify Example~\ref{ex:intrinsic}~(2) in a natural way
to get an adapted metric. 
More specifically, by defining the length of an edge now by 
$$\si_1(x,y) = \left( \Deg(x) \vee \Deg(y) \right)^{-1/2} \wedge 1$$ 
where $a \vee b = \max \{a, b \}$ and $a \wedge b = \min \{a,b\}$ 
then we can extend to paths to get an adapted metric $\vr_{\si_1}$. Then, for locally finite graphs,
Corollary~4.3 in \cite{GHM12} gives that if $\inf_{x \in X} m(x)>0$ and
$$V_{\vr_{\si_1}}(r) \leq e^{Cr}$$ 
for $C>0$ and all large $r$, then $G$ is stochastically complete. 
We note that the additional assumption that $\inf_{x \in X} m(x)>0$
is used along with the volume growth assumption to show that balls defined with respect to $\vr_{\si_1}$ are finite.
Thus, these assumptions are used as a replacement for the finiteness of balls assumption found 
in Theorem~\ref{t:sc_volume1} above.

Although clearly not optimal when compared with Theorem~\ref{t:sc_volume1},
it turns out that this volume growth result already gives an optimal volume growth condition in the case of standard edge
weights, counting measure and combinatorial growth distance. More specifically, letting $B_r$ denote the ball
of radius $r$ defined with respect to the combinatorial graph metric $d$ and $V(r)=m(B_r)$ which 
is the number of vertices in the ball in this case, 
Theorem~1.4 in \cite{GHM12} gives the following result.

\begin{theorem}\label{t:sc_volume_combinatorial}
Let $G$ be a graph with standard edge weights and counting measure.
If 
$$V(r) \leq C r^3$$
for some constant $C>0$ and all large $r$, then $G$ is stochastically complete.
\end{theorem}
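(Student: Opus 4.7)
The plan is to reduce Theorem~\ref{t:sc_volume_combinatorial} to the corollary of \cite{GHM12} quoted just above, namely that $\inf_{x} m(x) > 0$ together with the adapted-metric exponential bound $V_{\vr_{\si_1}}(r) \leq e^{Cr}$ for all large $r$ implies stochastic completeness. Since we are working with the counting measure, $\inf_{x}m(x) = 1 > 0$ is automatic, so the entire argument reduces to deriving $V_{\vr_{\si_1}}(R) \leq e^{C'R}$ from the combinatorial cubic bound $V(r) \leq C r^3$.

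To establish this growth comparison, I would aim for a pointwise lower bound of the form $\vr_{\si_1}(x_0, x) \geq c \log d(x_0, x)$ for $d(x_0, x)$ sufficiently large. Such an estimate immediately yields $B_R^{\vr_{\si_1}} \subseteq B_{\lceil \exp(R/c) \rceil}$ with respect to the combinatorial metric, whence $V_{\vr_{\si_1}}(R) \leq V(e^{R/c}+1) \leq C''e^{3R/c}$, as required. The key observation enabling this bound is that any combinatorial path from $x_0$ to a vertex $x \in S_k$ must traverse every intermediate sphere $S_l$ for $1 \leq l \leq k$, since $|d(x_0,y_{j+1}) - d(x_0,y_j)| \leq 1$ along any path $y_0, y_1, \ldots$. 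Thus each such path possesses a ``first-entry'' edge $(u_l, v_l)$ into $S_l$ with $u_l \in S_{l-1}$ and $v_l \in S_l$, contributing $\si_1(u_l, v_l) = 1/\sqrt{\deg(u_l) \vee \deg(v_l)}$ to its $\vr_{\si_1}$-length. Minimizing over all paths reduces the problem to bounding the resulting sphere-crossing sum from below by $c \log k$.

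The main obstacle is extracting this logarithmic lower bound from the cubic volume hypothesis alone. The crude pointwise estimate $\deg(y) \leq 3V(l+1) \leq 3C(l+1)^3$ for $y \in S_{l-1} \cup S_l$ yields only the convergent series $\sum_{l} (l+1)^{-3/2}$, which is far too weak. To overcome this, the argument must exploit that degrees of order $V(l)$ cannot persist on every sphere without violating cubic growth: a sphere with many high-degree vertices necessarily forces comparably large adjacent spheres, which quickly exhausts the cubic volume budget. Combining such a structural constraint with a Cauchy--Schwarz-type averaging over the sphere-crossings along a minimizing path should produce the desired $\log k$ lower bound. Making this balance between local degree concentration and global cubic volume growth precise is the technical core of the proof of Theorem~1.4 in \cite{GHM12}, and it is the step I expect to require the most care.
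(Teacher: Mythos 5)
Your proposal follows essentially the same route as the paper's sketch: reduce to the exponential volume growth criterion for the adapted metric $\vr_{\si_1}$, and show that cubic combinatorial volume growth forces $\vr_{\si_1}$-balls to sit inside combinatorial balls of exponentially larger radius via a sphere-crossing lower bound on path lengths. The one step you leave open --- the pigeonhole argument showing that spheres large enough to produce degrees of order $r^3$ cannot occur at a positive proportion of levels without violating the cubic volume budget --- is precisely the step the paper itself defers to Section~4 of \cite{GHM12}, so your outline matches the paper's proof at (indeed slightly beyond) its own level of detail.
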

\begin{proof}[Idea of proof]
It can be shown that under the assumption $V(r) \leq C r^3$, there
are sufficiently many vertices with small degree so that a ball defined with respect $\vr_{\si_1}$ is contained
in a ball of larger radius with respect to the combinatorial graph distance $d$. 
In particular, the volume growth restriction on $V(r)$ can be used to get a volume growth restriction
on $V_{\vr_{\si_1}}(r)$ and then stochastic completeness follows from the volume growth
criterion for $V_{\vr_{\si_1}}(r)$ mentioned above.
For more details, see Section~4
in \cite{GHM12}.
\end{proof}

We note that the characterization of stochastic completeness of anti-trees provided in Corollary~\ref{c:antitree_sc} above
gives the sharpness of Theorem~\ref{t:sc_volume_combinatorial} as for an anti-tree with sphere growth 
$a_r$ of the order $r^{1+\eps}$ for any $\eps>0$, we have that $V(r)$ grows like $r^{3+\eps}$ and that the anti-tree is stochastically incomplete. Furthermore, in this case the weighted degree of vertices grows like
$r^{2+\eps}$ so that balls defined with respect to $\vr_{\si_1}$ are not finite and thus the graph is not geodesically
complete. Finally, we note that
Theorem~\ref{t:sc_volume_combinatorial} remains valid in the more general setting where the edge weights
and vertex measure satisfy
$$b(x,y) \leq C m(x)m(y)$$
for all $x, y \in X$ and some $C>0$, see Remark~4.4 in \cite{GHM12}. This type of assumption is sometimes
called an \emph{ellipticity condition} on graphs, for more details and some applications of this condition
in the context of curvature see \cite{KM}.

\section{Stochastic completeness and curvature}\label{s:curvature}
In recent years there has been a surge of interest in various notions of curvature in the discrete setting.
We do not even attempt to give a comprehensive overview of definitions nor of results. We rather confine
ourselves to two of the most prominent definitions of curvature and discuss results which relate curvature and stochastic 
completeness. The two notions of curvature that we will discuss are that of Bakry--{\`E}mery 
which arises from the $\Gamma$-calculus as outlined in \cite{BE85}
and Ollivier Ricci which arises from optimal transportation theory and is defined for general Markov
chains in \cite{Oll09}. 
We will briefly introduce each
and present the relevant results for stochastic completeness.

We note that, unlike in the manifold case where the Bishop--Gromov inequality relates lower bounds on Ricci curvature
to volume growth, there is no analogous connection between lower curvature bounds and volume growth in
full generality in the discrete setting thus far. 
Therefore, we are not able to relate the 
volume growth criteria for stochastic completeness presented in the previous section to the curvature
conditions given in this section.
However, for Bakry--{\'E}mery curvature, see Theorem~1.8 in \cite{HM} for some recent progress in connecting 
curvature and volume growth for a specific class of graphs and Theorem~4.1 in \cite{M} for
a connection between lower curvature bounds and the volume doubling property for finite
graphs. Furthermore, \cite{AS} establishes comparisons between averaged inner and outer
degrees and volume growth and also gives an example of two graphs 
with equal Olliver Ricci curvatures but different volume growths. 

\subsection{Bakry--{\'E}mery curvature and stochastic completeness}
We start with Bakry--{\'E}mery curvature. This notion has origins in work on hypercontractive
semigroups found in \cite{BE85}. For early manifestations in the graph setting, see \cite{LY10,Schm96}. 
We caution the reader at the outset that in the curvature on graphs community, one usually takes the Laplacian
with the opposite sign of ours.

We first introduce the $\Gamma$-calculus. In order to take care of convergence of sums, we now assume
that all graphs are locally finite. In this case, we note that the domain of the formal Laplacian is the set of all functions
on $X$, that is, $\F = C(X)$. For $f, g \in C(X)$ and $x \in X$, we let
$$\Gamma(f,g)(x) = -\frac{1}{2} \left(\Lp(fg)-f \Lp g - g \Lp f\right)(x).$$
We will follow convention and write $\Gamma(f)$ for $\Gamma(f,f)$. 
By a direct calculation we then obtain
$$\Gamma(f)(x)=\frac{1}{2m(x)}\sum_{y \in X}b(x,y)(f(x)-f(y))^2.$$
In particular, if $\Gamma(f)=0$, then $f$ is constant on any connected component
of the graph. In some sense, $\Gamma$ can be thought of as an analogue to the norm squared of the gradient
from the continuous setting.

We then define
$$\Gamma_2(f)= -\frac{1}{2}\Lp \Gamma(f) + \Gamma(f, \Lp f).$$
With these notations $G$ is said to satisfy 
$CD(K,\infty)$ at $x \in X$ for $K \in \R$ if
$$\Gamma_2(f)(x) \geq K \Gamma(f)(x)$$
for all $f \in C(X)$. The 
$CD(K,\infty)$ condition on $G$ is then just the fact
that $G$ satisfies the conditions at all $x \in X$.

\begin{definition}
A locally finite weighted graph $G$ is said to satisfy 
$CD(K,\infty)$ for $K \in \R$ if
$$\Gamma_2(f) \geq K \Gamma(f)$$
for all $f \in C(X)$.
\end{definition}

The idea of the definition is to mimic the inequality obtained via Bochner's formula and a lower
Ricci curvature bound in the Riemannian manifold setting. The number $K$ is then thought
to be a lower curvature bound on the graph.

We now work towards giving criteria for stochastic completeness involving Bakry--{\'E}mery curvature
conditions. We start with the main result found as Theorem~1.2 in \cite{HL17} which gives stochastic completeness
under the condition $CD(K,\infty)$, finiteness of distance balls and a uniform lower bound on the measure. 
We recall that we denote the heat semigroup by 
$$P_t=e^{-tL}$$
for $t \geq 0$. This semigroup is originally defined on $\ell^2(X,m)$ via the spectral theorem and can then be extended to
all $\ell^p(X,m)$ spaces for $p \in [1,\infty]$ via monotone limits. 
The heat semigroup is also strongly continuous, that is,
$P_t f \to f$ as $t \to 0^+$ for all $f \in \ell^\infty(X)$.
Furthermore, $P_t$ is Markov, specifically,
for $0 \leq f \leq 1$, we have $0 \leq P_t f \leq 1$. Stochastic completeness is then equivalent to the 
fact that $P_t 1 =1$ for all $t \geq 0$. 

\begin{theorem}[Stochastic completeness and Bakry--{\'E}mery curvature]\label{t:sc_BE}
Let $G$ be a locally finite graph connected with $\inf_x m(x) >0$ 
and an intrinsic metric $\vr$ with finite distance balls.
If $G$ satisfies $CD(K,\infty)$, then $G$ is stochastically complete.
\end{theorem}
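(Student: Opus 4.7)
My plan is to use $CD(K,\infty)$ to establish a Bakry--\'{E}mery type semigroup gradient estimate, then combine it with cutoff functions built from the intrinsic metric to deduce that $P_t 1$ is spatially constant, which together with the heat equation and the initial condition forces $P_t 1 = 1$.

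The first step is to derive the gradient estimate
\begin{equation*}
\Gamma(P_t f)(x) \leq e^{-2Kt}\, P_t\bigl(\Gamma(f)\bigr)(x)
\end{equation*}
for bounded $f$, $t \geq 0$, and $x \in X$. This is done by differentiating the function $s \mapsto e^{2Ks}\, P_s\bigl(\Gamma(P_{t-s}f)\bigr)$ on $(0,t)$ and using the pointwise inequality $\Gamma_2 \geq K\Gamma$. Local finiteness ensures $\Lp$ acts on all of $C(X)$ so that $\Gamma$ and $\Gamma_2$ make sense pointwise; the hypothesis $\inf_x m(x) > 0$ is used to extend $P_t$ from $\ell^2(X,m)$ to $\ell^\infty(X)$ by monotone limits and to justify the interchange of $\Gamma$ with these limits, thereby extending the estimate from $f\in C_c(X)$ to bounded $f$.

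The second step is to produce suitable cutoffs using the intrinsic metric. Fixing a basepoint $x_0 \in X$, set
\begin{equation*}
\eta_n(x) = \Bigl(1 - \tfrac{1}{n}\vr(x,x_0)\Bigr)_+.
\end{equation*}
Since $\eta_n$ is $(1/n)$-Lipschitz with respect to $\vr$, the intrinsic metric inequality yields
\begin{equation*}
\Gamma(\eta_n)(x) = \frac{1}{2m(x)}\sum_{y\in X}b(x,y)\bigl(\eta_n(x)-\eta_n(y)\bigr)^2 \leq \frac{1}{2n^2}.
\end{equation*}
The assumption that $\vr$-balls are finite makes each $\eta_n$ compactly supported, and $\eta_n \uparrow 1$ pointwise on $X$. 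Applying the gradient estimate gives
\begin{equation*}
\Gamma(P_t \eta_n)(x) \leq \frac{e^{-2Kt}}{2n^2},
\end{equation*}
and because the graph is locally finite the sum defining $\Gamma$ at each vertex is finite, so the monotone convergence $P_t\eta_n \uparrow P_t 1$ passes inside $\Gamma$ to yield $\Gamma(P_t 1)(x)=0$ for every $x$ and every $t\geq 0$. Hence $P_t 1$ is constant on each connected component, and by connectedness of $G$ there is a function $c(t)$ with $P_t 1 \equiv c(t)$. Since constants lie in the kernel of $\Lp$, the heat equation $(\Lp + \pt)P_t 1 = 0$ reduces to $c'(t) = 0$, and the initial condition $P_0 1 = 1$ then gives $c(t) \equiv 1$, so $G$ is stochastically complete by Theorem~\ref{t:sc_characterizations}.

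The main obstacle I expect is the first step. On graphs the chain rule for $\Gamma$ fails, so Bakry's usual differentiation argument requires careful bookkeeping, and justifying the gradient estimate for $f\in\ell^\infty(X)$ rather than just for $f \in D(L)$ demands a delicate monotone extension of $P_t$ to bounded functions. The three standing hypotheses feed directly into these technical points: local finiteness and $\inf_x m(x)>0$ underwrite the analytic extension arguments, while finiteness of $\vr$-balls guarantees compactly supported cutoffs. Once the gradient estimate is in place, the geometric part of the argument in the remaining steps is a short consequence.
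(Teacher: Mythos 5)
Your proposal is correct and follows essentially the same route as the paper's proof: establish the semigroup gradient estimate $\Gamma(P_t f) \leq e^{-2Kt}P_t(\Gamma(f))$ from $CD(K,\infty)$ (the technical heart, as you rightly flag), build compactly supported Lipschitz cutoffs from the intrinsic metric with $\Gamma(\eta_n)$ small, pass to the limit to get $\Gamma(P_t 1)=0$, and conclude $P_t 1 = 1$ via the heat equation and strong continuity. The only cosmetic difference is your choice of cutoff $(1-\vr(\cdot,x_0)/n)_+$ versus the paper's plateau version, and you could note that the gradient estimate is only ever needed for $f \in C_c(X)$, so the extension to all of $\ell^\infty(X)$ you worry about is not required.
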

\begin{proof}[Idea of proof]
The bulk of the work is in showing that $CD(K,\infty)$ is actually equivalent to the 
following gradient estimate on the heat semigroup
$$ \Gamma (P_t \vph) \leq e^{-2Kt}P_t(\Gamma(\vph))$$
for all $\vph \in C_c(X)$ and $t \geq 0$, see Theorem~4.1 in \cite{HL17}. 
Furthermore, the finiteness of balls
with respect to an intrinsic metric allows for the construction of a sequence $\vph_n \in C_c(X)$ such that
$0 \leq \vph_n \leq 1$, $\vph_n(x) \to 1$ as $n \to \infty$ for every $x \in X$ and such that
$$\Gamma(\vph_n) \leq \frac{1}{n}$$
for all $n \in \N$.
In particular, one lets
$$\vph_n(x) = \left(\frac{2n-\vr(x, x_0)}{n}\right) \vee 0 \wedge 1$$
where $x_0 \in X$ is an arbitrary vertex, $a\vee b = \max \{a,b\}$ and $a\wedge b = \min\{a, b\}$.
It is then clear from the definition that $0 \leq \vph_n \leq 1$, $\vph_n =1$ on $B_n$, $\vph_n$ is supported
on $B_{2n}$, and thus $\vph_n \in C_c(X)$ by the assumption of finite distance balls,
and using the fact that $\vr$ is intrinsic, a direct calculation gives $\Gamma(\vph_n) \leq \frac{1}{n}$.

As the semigroup is Markov on $C_c(X)$, we then obtain 
$$P_t (\Gamma(\vph_n)) \leq \frac{1}{n}.$$
Given these ingredients, the proof is now straightforward as
$P_t \vph_n(x) \to P_t 1(x)$ for all $x \in X$ by the monotone convergence theorem and thus
\begin{align*}
\Gamma(P_t 1)(x) &= \lim_{n \to \infty} \Gamma(P_t \vph_n)(x) \leq \liminf_{n \to \infty} e^{-2Kt}P_t(\Gamma(\vph_n))(x) \\
&\leq \liminf_{n \to \infty} e^{-2Kt} \frac{1}{n} = 0
\end{align*}
for all $x \in X$. 
Thus, $P_t 1$ is constant for every $t \geq 0$. 
From the heat equation we obtain
$$\pt P_t 1 = - \Lp P_t 1 =0$$
for all $t >0$.
As $P_0 1 =1$ and the semigroup is strongly continuous
we obtain that $P_t 1 = 1$ for all $t \geq 0$.
\end{proof}

\begin{remark}
We note that Theorem~\ref{t:sc_BE} includes not only the assumption of finiteness of balls but also
the additional assumption that $\inf_x m(x)>0$. Both of
these assumptions appear in the context of essential self-adjointness.
In particular, $\inf_x m(x)>0$ implies both that $\Lp$ maps $C_c(X)$ 
into $\ell^2(X,m)$ and that the restriction of $\Lp$ to $C_c(X)$ is
essentially self-adjoint, see Theorem~6 in \cite{KL12}. In the context of the proof of 
Theorem~\ref{t:sc_BE} in \cite{HL17}, this assumption is used to establish the convergence of sums in the 
proof of the estimate $ \Gamma (P_t \vph) \leq e^{-2Kt}P_t(\Gamma(\vph))$. However, it is not
clear whether this assumption is really necessary for the ultimate result of stochastic completeness.

On the other hand, the result in \cite{HL17} actually assumes a seemingly weaker assumption than finiteness
of balls found in the formulation of Theorem~\ref{t:sc_BE} above. Namely, Theorem~1.2 in \cite{HL17} only
assumes the existence of a sequence of finitely supported functions $(\vph_n)$ 
such that $0 \leq \vph_n \leq 1,
\vph_n(x) \to 1$ and $\Gamma(\vph_n) \leq 1/n$ for all $n \in \N$. 
This is sometimes referred to as a \emph{completeness} assumption
on the graph as, in the manifold setting, the existence of such a sequence 
is known to be equivalent to geodesic completeness, see \cite{BGL14,
Stri83}. It is clear in the proof above that the existence of an intrinsic metric with finite distance balls
implies the existence of such a sequence. On the other hand,
Marcel Schmidt recently communicated to us that the converse is also true, that is, that
the existence of an intrinsic metric with finite distance balls is actually equivalent to completeness as defined above.
For a proof of this fact, see Appendix~A in the updated version of \cite{LSW}. Furthermore, for graphs satisfying
the ellipticity condition $b(x,y) \leq C m(x)m(y)$ for all $x, y \in X$ and which are
stochastically complete and satisfy an additional condition called the Feller property, see \cite{PS12, Woj17} 
for more details, $CD(0,\infty)$ implies that the graph is complete, see Theorem~6.1 in \cite{KM}.
\end{remark}

Theorem~\ref{t:sc_BE} gives stochastic completeness in the case of a uniform lower Bakry--{\'E}mery curvature 
bound in the spirit of \cite{Yau78}.
However, looking at the optimal curvature results from the Riemannian setting mentioned in the introduction,
we would expect to allow for some decay of Ricci curvature as in \cite{Var83, Hsu89}. One improvement of
Theorem~\ref{t:sc_BE} in this direction is contained for a special class of graphs in \cite{HM}. More specifically,
if $G$ is a graph with $X = \N_0$, $x \sim y$ if and only if $|x-y|=1$, $m$ is the counting measure
and $\vr$ is an intrinsic metric on $G$, then letting
$$\kap(x) = \sup \{K \in \R \ | \ G \textup{ satisfies } CD(K,\infty) \textup{ at } x \}$$
if $\kap(x)$ decays like $-\vr^2(0,x)$, then $G$ is stochastically complete,
see Theorem~1.6 in \cite{HM} for a more precise statement and proof.

\subsection{Ollivier Ricci curvature and stochastic completeness}
We now discuss a second commonly appearing manifestation of curvature in the discrete setting. 
This formulation comes from optimal transport theory, see \cite{Vill03} for a general background, and was defined for
Markov chains in \cite{Oll07, Oll09}. For graphs, the basic idea is to transport a mass, which is given
by the transition probability of a simple random walker starting at a vertex, to that of a mass at another vertex 
with minimal effort. This definition
was then modified to give an infinitesimal version for the case of bounded degree in \cite{LLY11} and extended
to graphs with general measure and edge weights in \cite{MW19}. 
We note that, in contrast to the Bakry--{\'E}mery formulation which
is defined at a vertex, this curvature is defined for pairs of vertices.

We start with some basic definitions. As in the previous subsection, we assume that all graphs are locally finite.
We will further assume that all graphs are connected.
For a vertex $x \in X$ and $\eps>0$ small, one defines a transition probability distribution $\mu_x^\eps$ on $X$ via
$$\mu_x^\eps(y)= 
\begin{cases}
1 - \eps \Deg(x) & \textup{ if } y =x \\
\eps b(x,y)/m(x)& \textup{ otherwise}
\end{cases}
$$
where $\Deg(x)$ is the weighted degree of $x$.
This is a probability distribution provided that $\eps \leq 1/\Deg(x)$. We note that a connection to the
Laplacian is given by
$$\mu_x^\eps(y)= 1_y(x) -\eps L 1_y(x)$$
as follows by a direct calculation.

Now, for two vertices $x_1, x_2 \in X$, we define the Wasserstein distance between $\mu_{x_1}^\eps$
and $\mu_{x_2}^\eps$ via
$$W(\mu_{x_1}^\eps, \mu_{x_2}^\eps) = \inf_{\pi} \sum_{x,y \in X}\pi(x,y) d(x,y)$$
where the infimum is taken over all $\pi:X \times X \lra [0,1]$ with
$\sum_{y \in X}\pi(x,y) = \mu_{x_1}^\eps(x)$ and $\sum_{x \in X}\pi(x,y) = \mu_{x_2}^\eps(y)$
and $d$ is the combinatorial graph distance. The idea behind this is that $\pi$ transports the mass
distribution from $\mu_{x_1}^\eps$ to $\mu_{x_2}^\eps$ and thus $W(\mu_{x_1}^\eps, \mu_{x_2}^\eps)$
minimizes the effort required to carry out this transport.

We let $Lip(1)$ denote the set of functions with Lipshitz constant 1 with respect to the combinatorial
graph distance, that is,
$$Lip(1) = \{ f \in C(X) \ | \ |f(x)-f(y)| \leq d(x,y) \textup{ for all } x, y \in X \}$$
and let $\ell^\infty(X)$ denote the set of bounded functions.
By Kantorovich duality, see Theorem~1.14 in \cite{Vill03}, we have
$$W(\mu_{x_1}^\eps, \mu_{x_2}^\eps) = 
\sup_{f \in Lip(1) \cap \ell^\infty(X)} \sum_{x \in X}f(x)(\mu_{x_1}^\eps(x)-\mu_{x_2}^\eps(x)).$$
Finally, following \cite{Oll09, LLY11}, in \cite{MW19} we define the Ollivier Ricci curvature
between two vertices as follows.

\begin{definition}[Ollivier Ricci curvature]
For vertices $x, y \in X$ with $x \neq y$, we let 
$$\kap^\eps(x,y)=1- \frac{W(\mu_x^\eps, \mu_y^\eps)}{d(x,y)}$$
and define the \emph{Ollivier Ricci curvature} as
$$\kap(x,y)= \lim_{\eps \to 0^+} \frac{\ka^\eps(x,y)}{\eps}.$$
\end{definition}

\begin{remark}
We note that \cite{Oll09} often considers the case when $m(x)=\sum_{y \in X}b(x,y)$
and $\eps=1$. In this case, $\Deg(x)=1$ so that
$$\mu_x^1(y) = \frac{b(x,y)}{\sum_{z \in X} b(x,z)}$$
is just the one step transition probability at $x$ of the simple random walk on $G$.
More generally, when $\eps<1$, we get that
$$\mu_x^\eps(y) = \begin{cases}
1 - \eps & \textup{ if } y =x \\
 {\eps b(x,y)}/{\sum_{z \in X} b(x,z)} & \textup{ otherwise}.
\end{cases}$$
In this case, the simple random walk is given a positive probability to remain at the vertex $x$. As such,
the constant $1- \eps$ is sometimes referred to as the idleness parameter in this setting,
see \cite{BCLMP18}.

The idea of letting $\eps \to 0^+$ in the case when $m(x) = \sum_{y \in X}b(x,y)$ then
appears in \cite{LLY11}. In particular, the concavity of the function $\kap^\eps$ is used
to establish the existence of the limit. However, as noted previously, the assumption
$m(x) = \sum_{y \in X}b(x,y)$ gives that $\Deg(x)=1$ and thus all such graphs are stochastically
complete by Theorem~\ref{t:bounded_SC}. 
The contribution of \cite{MW19} is to allow for this definition in the case of
possibly unbounded vertex degree which makes the question of stochastic completeness
interesting. The existence of the limit as $\eps \to 0^+$ follows analogously to the argument
in \cite{LLY11}, see also \cite{BCLMP18}.
\end{remark}

We note that the Ollivier Ricci curvature as defined above can be explicitly calculated in many cases.
For example, if the vertices $x\sim y$ are not contained in any 3-, 4-, or 5-cycles, then
$$\kap(x,y) = 2b(x,y)\left( \frac{1}{m(x)}+\frac{1}{m(y)}\right) - \Deg(x) - \Deg(y)$$
see Example~2.3 in \cite{MW19}. As a concrete illustration, in the case of standard edge weights
and counting measure, the curvature of an edge in a $k$-regular tree is 
$\kap(x,y)=4-2k$. This confirms the notion that regular trees of degree greater than 2 are the analogues
of hyperbolic space as they have constant negative curvature.

As a second example, let $X=\N_0$ and let $x\sim y$ if and only if $|x-y|=1$.
Such graphs will be referred to as \emph{birth-death chains}.
We note that they automatically fall into the framework of weakly spherically symmetric graphs as 
discussed in Section~\ref{s:symmetry}.
Let $j, k \in \N_0$ with $k > j$. The Ollivier Ricci curvature on a birth-death chain can be calculated directly as
$$\kap(j, k) = \frac{1}{k-j} \left( \frac{b(j, j+1)-b(j,j-1)}{m(j)} - \frac{b(k,k+1)-b(k,k-1)}{m(k)}\right)$$
see Theorem~2.10 in \cite{MW19}.  We note that for $j \sim k$, this reduces to the formula above.

Both of the examples above are easily derived from the following
formula which allows us to compute the curvature in terms of the Laplacian. 
To state it, we let
$$\nabla_{xy}f= \frac{f(x)-f(y)}{d(x,y)}$$
for $x \neq y$.
For a proof of the following formula see Theorem~2.1 in \cite{MW19}.
\begin{theorem}[Ollivier Ricci curvature and Laplacian]\label{t:curvature_Lap}
Let $G$ be a locally finite connected weighted graph. For vertices $x \neq y$, we have
$$\kap(x,y) =\inf_{\substack{f \in Lip(1) \\ \nabla_{xy}f=1}} \nabla_{xy}\Lp f
 =\inf_{\substack{f \in Lip(1) \cap C_c(X) \\ \nabla_{xy}f=1}} \nabla_{xy}\Lp f.$$
\end{theorem}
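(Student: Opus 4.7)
The plan is to use the Kantorovich duality already recalled in the paper together with the direct computation
$$\sum_{z\in X} f(z)\mu_w^\eps(z) = f(w) - \eps \Lp f(w),$$
valid for any $f \in Lip(1) \cap \ell^\infty(X)$ and $w \in X$, which follows by summing the identity $\mu_w^\eps(z) = 1_z(w) - \eps \Lp 1_z(w)$ against $f$. Applying this with $w = x$ and $w = y$, subtracting and dividing by $d(x,y)$, the Kantorovich formula recalled above rewrites as
$$\frac{\kap^\eps(x,y)}{\eps} = \inf_{f \in Lip(1) \cap \ell^\infty(X)}\left[\nabla_{xy}\Lp f + \frac{1-\nabla_{xy}f}{\eps}\right],$$
in which the second bracket term is non-negative since $f \in Lip(1)$ forces $\nabla_{xy} f \leq 1$.

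For the upper bound, any $f \in Lip(1) \cap C_c(X)$ with $\nabla_{xy} f = 1$ is admissible and gives $\kap^\eps(x,y)/\eps \leq \nabla_{xy}\Lp f$; letting $\eps \to 0^+$ and taking the infimum yields $\kap(x,y) \leq \inf_{Lip(1)\cap C_c(X),\ \nabla_{xy}f=1} \nabla_{xy}\Lp f$. For the lower bound, pick $\eps_n \to 0^+$ and near-minimizers $f_n \in Lip(1) \cap \ell^\infty(X)$ of the bracket, which we denote $G_{\eps_n}(f_n)$, satisfying $G_{\eps_n}(f_n) \leq \kap^{\eps_n}(x,y)/\eps_n + 1/n$. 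The essential observation is that $g \in Lip(1)$ forces $|\Lp g| \leq \Deg$ pointwise (since $|g(u) - g(v)| \leq d(u,v) = 1$ for neighbours $u \sim v$), so $\nabla_{xy}\Lp f_n$ is uniformly bounded; combined with $G_{\eps_n}(f_n) \to \kap(x,y) < \infty$ and non-negativity of $(1 - \nabla_{xy}f_n)/\eps_n$, this forces $\nabla_{xy}f_n \to 1$.

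Normalising $f_n(y) = 0$ gives $|f_n(z)| \leq d(z,y)$, so the sequence is uniformly bounded on the finite set $F := \{x,y\} \cup \{z : z \sim x \text{ or } z \sim y\}$ (finite by local finiteness), and a subsequence converges pointwise on $F$ to some $h : F \to \R$. The McShane-type extension $f_*(z) := \max_{w \in F}(h(w) - d(z,w))$ lies in $Lip(1)$, coincides with $h$ on $F$, and therefore satisfies $\nabla_{xy}f_* = 1$ together with $\nabla_{xy}\Lp f_* = \lim_n \nabla_{xy}\Lp f_n \leq \kap(x,y)$. This proves $\inf_{f\in Lip(1),\ \nabla_{xy}f=1} \nabla_{xy}\Lp f \leq \kap(x,y)$, which combined with the upper bound gives the first equality.

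For the second equality, the inequality $\inf_{Lip(1)\cap C_c(X)} \geq \inf_{Lip(1)}$ is trivial; the reverse follows by truncation: given $g \in Lip(1)$ with $\nabla_{xy}g = 1$, set $\tilde g := (-\eta_R) \vee (\eta_R \wedge g)$ with the tent $\eta_R(z) := (R - d(z,x))_+$ and $R$ large enough that $\eta_R \geq |g|$ on $F$. Then $\tilde g \in Lip(1) \cap C_c(X)$ agrees with $g$ on $F$, so $\nabla_{xy}\tilde g = 1$ and $\nabla_{xy}\Lp \tilde g = \nabla_{xy}\Lp g$. The main obstacle is the compactness step in the lower bound, namely ensuring that a subsequential pointwise limit on $F$ extends to a globally $Lip(1)$ function with $\nabla_{xy}f_* = 1$ exactly; this succeeds precisely because both $\nabla_{xy}f$ and $\nabla_{xy}\Lp f$ depend only on the restriction of $f$ to the finite set $F$, so pointwise convergence on $F$ is all that is needed.
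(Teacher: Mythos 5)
Your proposal is correct. The paper itself gives no proof of this theorem (it only cites Theorem~2.1 of \cite{MW19}), and your argument — rewriting $\kap^\eps(x,y)/\eps$ via Kantorovich duality and the identity $\sum_z f(z)\mu_w^\eps(z)=f(w)-\eps\Lp f(w)$, bounding $|\Lp f|\leq\Deg$ for $f\in Lip(1)$ to force $\nabla_{xy}f_n\to 1$, extracting a limit on the finite neighborhood of $\{x,y\}$ via a McShane extension, and passing between $Lip(1)$ and $Lip(1)\cap C_c(X)$ by a tent-function truncation — is the standard route taken in that reference and is complete.
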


We now show how Theorem~\ref{t:curvature_Lap} allows us to prove a Laplacian
comparison result. In this result, we compare the Laplacian applied to a distance function
on a general graph to the Laplacian applied to a distance function on a birth-death chain.
We first define the notion of sphere curvature which will be involved. We recall that for a vertex $x_0 \in X$, $S_r$ denotes
the sphere of radius $r$ around $x_0$ with respect to the combinatorial graph distance.
We then let
$$\kap(r) = \min_{y \in S_r} \max_{\substack{x \in S_{r-1} \\ x \sim y}}\kap(x,y)$$
for $r\in \N$ with $\kap(0)=0$ and call $\kap$ the \emph{sphere curvature}.

\begin{theorem}[Laplacian comparison]\label{t:Lap_comp}
Let $G$ be a locally finite connected weighted graph. If $x_0 \in X$, $\rho(x)=d(x,x_0)$ and $\kap$ denotes
the sphere curvature, then
$$\Lp \rho(x) \geq \sum_{j=1}^{\rho(x)} \kap(j) - \Deg(x_0)$$
for all $x \in X$.
\end{theorem}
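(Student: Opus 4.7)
The plan is to prove the estimate by induction on $r = \rho(x)$, using Theorem~\ref{t:curvature_Lap} as the workhorse applied to the function $f = \rho$ itself. The key observation is that the distance function $\rho(x) = d(x, x_0)$ lies in $Lip(1)$ by the triangle inequality, and moreover lies in $\F$ (since $|\rho(y)| \leq |\rho(x)| + 1$ for $y \sim x$, making $\Lp\rho$ well-defined pointwise). So $\rho$ is a legitimate competitor in the infimum from Theorem~\ref{t:curvature_Lap}, and whenever $y \sim x$ with $\rho(y) = \rho(x) + 1$, we have $\nabla_{yx}\rho = (\rho(y)-\rho(x))/1 = 1$, which gives the curvature inequality
\begin{equation*}
\kap(y, x) \;\leq\; \nabla_{yx}\Lp\rho \;=\; \Lp\rho(y) - \Lp\rho(x).
\end{equation*}

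For the base case $\rho(x) = 0$, I would compute $\Lp\rho(x_0)$ directly: since every neighbor $y$ of $x_0$ satisfies $\rho(y) = 1$, one obtains $\Lp\rho(x_0) = -\frac{1}{m(x_0)}\sum_{y \sim x_0} b(x_0,y) = -\Deg(x_0)$, matching the claimed inequality (which is an equality here as the empty sum is zero). For the inductive step, fix $y \in S_r$ with $r \geq 1$. By definition of the combinatorial distance and connectedness, $y$ has at least one neighbor in $S_{r-1}$; moreover, by the definition of the sphere curvature $\kap(r)$ as a min over $y \in S_r$ of a max over such neighbors, one may select $x^* \in S_{r-1}$ with $x^* \sim y$ such that $\kap(x^*, y) \geq \kap(r)$. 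Applying the curvature inequality displayed above with the pair $(y, x^*)$, and then invoking the inductive hypothesis for $x^*$, gives
\begin{equation*}
\Lp\rho(y) \;\geq\; \Lp\rho(x^*) + \kap(r) \;\geq\; \sum_{j=1}^{r-1}\kap(j) - \Deg(x_0) + \kap(r) \;=\; \sum_{j=1}^{r}\kap(j) - \Deg(x_0),
\end{equation*}
completing the induction.

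The only genuine subtlety is verifying that Theorem~\ref{t:curvature_Lap} applies to the unbounded function $\rho$: the stated formula expresses $\kap(x,y)$ as an infimum over $Lip(1)$ (not merely $Lip(1) \cap C_c(X)$), so the upper bound $\kap(y,x^*) \leq \nabla_{yx^*}\Lp\rho$ is immediate provided $\rho \in \F$, which we have checked. Everything else is bookkeeping: selecting the extremal inward neighbor on each sphere and telescoping the curvature increments back to $x_0$. No Wasserstein computations or optimal couplings are needed — the comparison is pushed entirely through the dual Lipschitz characterization of $\kap$.
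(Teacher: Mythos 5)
Your proposal is correct and follows essentially the same route as the paper: induction on $r=\rho(x)$, the direct computation $\Lp\rho(x_0)=-\Deg(x_0)$ for the base case, and in the inductive step the choice of an inward neighbor $x^*\in S_{r-1}$ realizing the max in the definition of $\kap(r)$ followed by the bound $\kap(r)\leq\kap(x^*,y)\leq\nabla_{yx^*}\Lp\rho=\Lp\rho(y)-\Lp\rho(x^*)$ from Theorem~\ref{t:curvature_Lap} applied to $\rho\in Lip(1)$. Your extra check that $\rho\in\F$ (automatic under local finiteness) and that the $Lip(1)$ form of the infimum, rather than the $Lip(1)\cap C_c(X)$ form, is the one needed for the unbounded competitor $\rho$ is a correct and welcome clarification.
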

\begin{proof}
We note that in general
$$\Lp \rho(x) = \Deg_-(x)-\Deg_+(x)$$
where $\Deg_\pm(x)$ are the outer and inner degrees as defined in Section~\ref{s:symmetry}. 
In particular, $\Lp \rho(x_0)=-\Deg(x_0)$ so taking the
sum to be zero in this case gives the statement for $x=x_0$.

The proof is now by induction on $r=\rho(x)$ for $x \in S_r$. Assume that the statement is true for $r-1$. 
Let $y \in S_r$ and let $x\in S_{r-1}$
be such that $\kap(x,y) \geq \kap(x,z)$ for all $z \sim x$ with $z \in S_r$. We note that
$\kap(r) \leq \kap(x,y)$, $\nabla_{yx}\rho=1$ and $\rho \in Lip(1)$ 
so that Theorem~\ref{t:curvature_Lap} gives 
\begin{align*}
\kap(r) \leq \kap(x,y) &\leq \nabla_{yx}\Lp \rho =\Lp \rho(y) - \Lp \rho(x).
\end{align*}
Therefore, by the inductive hypothesis,
$$\sum_{j=1}^{r} \kap(j) - \Deg(x_0) = \sum_{j=1}^{r-1} \kap(j) - \Deg(x_0) + \kap(r) \leq \Lp \rho(x) + \kap(r) \leq \Lp \rho(y)$$
which completes the proof.
\end{proof}

We note that the Laplacian comparison result is sharp on birth-death chains as
can be seen by a direct calculation. That is, for birth-death chains, we get
$$\Lp \rho(x) = \sum_{j=1}^{\rho(x)} \kap(j) - \Deg(x_0).$$
Thus, Theorem~\ref{t:Lap_comp} compares the Laplacian of a distance function
on a general graph to that of a birth-death chain.

We now put the various pieces together to obtain a stochastic completeness result for Ollivier Ricci
curvature. This can be found as Theorem~4.11 in \cite{MW19}.
\begin{theorem}\label{t:SC_Oll_curv}
Let $G$ be a locally finite connected weighted graph. If
$$\kap(r) \geq -C \log r$$
for some constant $C>0$ and all large $r$, then $G$ is stochastically complete.
\end{theorem}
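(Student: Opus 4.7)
The plan is to deduce the result from the Khas$'$minskii criterion (Theorem~\ref{t:Khas}) applied to the distance function $v(x)=\rho(x)=d(x,x_0)$ for a fixed vertex $x_0\in X$. Local finiteness ensures that every function on $X$ belongs to $\mathcal{F}$, so $v\in\mathcal{F}$, and that each combinatorial ball contains only finitely many vertices. In particular, any sequence $(x_n)$ along which $\Deg(x_n)\to\infty$ must eventually exit every ball around $x_0$, since $\Deg$ is bounded on every finite set; hence $v(x_n)=\rho(x_n)\to\infty$, and the degree-blow-up hypothesis on $v$ in Theorem~\ref{t:Khas} holds automatically.

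To control $\Lp v$ from below, I apply the Laplacian comparison Theorem~\ref{t:Lap_comp}, which yields
$$\Lp v(x)\ \geq\ \sum_{j=1}^{\rho(x)}\kap(j)\ -\ \Deg(x_0).$$
Choose $r_0$ so that the hypothesis $\kap(j)\geq -C\log j$ holds for all $j\geq r_0$. The finitely many initial terms $\kap(1),\ldots,\kap(r_0-1)$ contribute a constant, and for the tail the elementary estimate $\sum_{j=1}^{r}\log j=\log(r!)\leq r\log r$ gives
$$\Lp v(x)\ \geq\ -C\,\rho(x)\log\rho(x)\ -\ C_1$$
for some $C_1>0$ and all $x$ with $\rho(x)\geq r_0$.

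Now set $f(r)=C(r+2)\log(r+2)+C_1+1$, a positive, strictly increasing, continuously differentiable function on $[0,\infty)$ whose reciprocal fails to be integrable at infinity by comparison with $1/(r\log r)$. Taking $K=B_{r_0}(x_0)$, a finite set on which $\Deg$ is therefore bounded, the inequality above rearranges to $\Lp v+f(v)\geq 1>0$ on $X\setminus K$. All hypotheses of Theorem~\ref{t:Khas} are thus met, and $G$ is stochastically complete.

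The principal point to watch is that the assumed curvature decay $-C\log r$ is precisely matched to the borderline integrability threshold $\int^\infty dr/(r\log r)=\infty$: the Laplacian comparison converts a lower curvature bound of order $-\log r$ into a lower bound on $\Lp\rho$ of order $-r\log r$, which is exactly the largest growth for which $1/f$ remains non-integrable. A strictly faster curvature decay would produce an $f$ whose reciprocal is integrable and the method breaks down, consistent with the sharpness statement mentioned after the theorem in the introduction.
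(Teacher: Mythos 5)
Your proposal is correct and follows essentially the same route as the paper: apply the Laplacian comparison (Theorem~\ref{t:Lap_comp}) to the distance function $\rho$ and then invoke the Khas$'$minskii criterion (Theorem~\ref{t:Khas}). The only difference is that you make explicit what the paper leaves implicit, namely the choice $f(r)=C(r+2)\log(r+2)+C_1+1$ and the verification that $\Deg(x_n)\to\infty$ forces $\rho(x_n)\to\infty$ under local finiteness, both of which are carried out correctly.
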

\begin{proof}
Let $\rho(x)=d(x,x_0)$. It follows from Theorem~\ref{t:Lap_comp} that
$$\Lp \rho(x) \geq \sum_{j=1}^{\rho(x)} \kap(j) - \Deg(x_0).$$
Now, from the assumption that $\kap(r) \geq -C \log r$ we may choose an
increasing continuously differentiable function $f:[0,\infty) \lra (0,\infty)$ such that
$$\Lp \rho + f(\rho) \geq 0$$
and 
$$\int^\infty \frac{1}{f(r)}dr=\infty.$$
As $\rho(x_n) \to \infty$ along any sequence of vertices $(x_n)$ with
$\Deg(x_n) \to \infty$, $G$ is stochastically complete by Theorem~\ref{t:Khas}.
\end{proof}

It turns out that Theorem~\ref{t:SC_Oll_curv} is sharp in the sense that for any $\eps>0$,
there exists a stochastically incomplete graph with $\kap(r) \geq -(\log r)^{1+\eps}$. This can already
be seen from the case of birth-death chains for which stochastic completeness is equivalent to
$$\sum_{r=0}^\infty \frac{r+1}{b(r,r+1)}=\infty$$
by Theorem~\ref{t:sc_symmetry} above. For further details, see Theorem~4.11 in \cite{MW19}.

We note that the optimal curvature criterion for stochastic completeness in terms of curvature in the 
manifold setting gives the borderline for stochastic completeness around the curvature decay of order
$-r^2$, see \cite{Var83, Hsu89}.
One might be tempted to try and reconcile the difference between the manifold and graph setting
by using intrinsic metrics as was successfully carried out in the case of volume growth, however, for Ollivier Ricci
curvature this approach turns out to not work, see Example~4.13 in \cite{MW19}. 

We note that
this is not the only difference between the continuous and discrete settings when it comes to curvature.
As another example,
there exist infinite graphs which have uniformly positive Ollivier Ricci curvature, see Example~4.18 in \cite{MW19}.
This is known to be impossible in the manifold setting by the Bonnet--Myers theorem. Here,
anti-trees also prove to be a source of counterexamples as they provide examples of infinite 
graphs satisfying and $CD(K,\infty)$ for $K>0$ and well as having uniformly positive Ollivier Ricci curvature, 
see \cite{CLMP20} in \cite{KLW} for the Bakry--{\'E}mery and Ollivier
Ricci curvature of anti-trees. However, as soon as one either imposes upper bounds on the vertex degree or
lower bounds the measure, then a graph with uniformly positive lower curvature bounds 
must be finite, see \cite{LMP18} for the case of Bakry--{\'E}mery
and \cite{MW19} for the case of Ollivier Ricci.

\subsection*{Acknowledgements} I would like to thank J{\'o}zef Dodziuk
for suggesting such a fruitful area of study and for sustaining me
over the years.  I am also happy to acknowledge the inspiration and support offered by
Isaac Chavel and Leon Karp.
Furthermore, I would like to thank Alexander Grigor$'$yan for encouragement and support,
early in my career up until the present moment. And also many thanks to my coauthors who contributed 
to this story and whom I also consider to be good friends. In alphabetical order:
Sebastian Haeseler, Bobo Hua, Xueping Huang, Matthias Keller, Daniel Lenz, Jun Masamune, 
Florentin M{\"u}nch and Marcel Schmidt. 
Finally, I would like to thank Isaac Pesenson for the invitation to contribute
this article.

\begin{bibdiv}
\begin{biblist}

\bib{AS}{article}{
   author={Adriani, Andrea},
   author={Setti, Alberto G.},
   title={Curvatures and volume of graphs},
   eprint={arXiv:2009.12814 [math.DG]}
}

\bib{Az74}{article}{
   author={Azencott, Robert},
   title={Behavior of diffusion semi-groups at infinity},
   journal={Bull. Soc. Math. France},
   volume={102},
   date={1974},
   pages={193--240},
   issn={0037-9484},
   review={\MR{0356254 (50 \#8725)}},
}

\bib{BE85}{article}{
   author={Bakry, D.},
   author={\'{E}mery, Michel},
   title={Diffusions hypercontractives},
   language={French},
   conference={
      title={S\'{e}minaire de probabilit\'{e}s, XIX, 1983/84},
   },
   book={
      series={Lecture Notes in Math.},
      volume={1123},
      publisher={Springer, Berlin},
   },
   date={1985},
   pages={177--206},
   review={\MR{889476}},
   doi={10.1007/BFb0075847},
}

\bib{BGL14}{book}{
   author={Bakry, Dominique},
   author={Gentil, Ivan},
   author={Ledoux, Michel},
   title={Analysis and geometry of Markov diffusion operators},
   series={Grundlehren der Mathematischen Wissenschaften [Fundamental
   Principles of Mathematical Sciences]},
   volume={348},
   publisher={Springer, Cham},
   date={2014},
   pages={xx+552},
   isbn={978-3-319-00226-2},
   isbn={978-3-319-00227-9},
   review={\MR{3155209}},
   doi={10.1007/978-3-319-00227-9},
}

\bib{BG15}{article}{
   author={Bonnefont, Michel},
   author={Gol\'{e}nia, Sylvain},
   title={Essential spectrum and Weyl asymptotics for discrete Laplacians},
   language={English, with English and French summaries},
   journal={Ann. Fac. Sci. Toulouse Math. (6)},
   volume={24},
   date={2015},
   number={3},
   pages={563--624},
   issn={0240-2963},
   review={\MR{3403733}},
   doi={10.5802/afst.1456},
}

\bib{BCLMP18}{article}{
   author={Bourne, D. P.},
   author={Cushing, D.},
   author={Liu, S.},
   author={M\"{u}nch, F.},
   author={Peyerimhoff, N.},
   title={Ollivier-Ricci idleness functions of graphs},
   journal={SIAM J. Discrete Math.},
   volume={32},
   date={2018},
   number={2},
   pages={1408--1424},
   issn={0895-4801},
   review={\MR{3815539}},
   doi={10.1137/17M1134469},
}

\bib{BBI01}{book}{
   author={Burago, Dmitri},
   author={Burago, Yuri},
   author={Ivanov, Sergei},
   title={A course in metric geometry},
   series={Graduate Studies in Mathematics},
   volume={33},
   publisher={American Mathematical Society, Providence, RI},
   date={2001},
   pages={xiv+415},
   isbn={0-8218-2129-6},
   review={\MR{1835418}},
   doi={10.1090/gsm/033},
}

\bib{Che73}{article}{
   author={Chernoff, Paul R.},
   title={Essential self-adjointness of powers of generators of hyperbolic
   equations},
   journal={J. Functional Analysis},
   volume={12},
   date={1973},
   pages={401--414},
   review={\MR{0369890 (51 \#6119)}},
}

\bib{CLMP20}{article}{
   author={Cushing, David},	
   author={Liu, Shiping},
   author={M\"{u}nch, Florentin},
   author={Peyerimhoff, Norbert},
   title={Curvature calculations for antitrees},
   conference={
      title={Analysis and geometry on graphs and manifolds},
   },
   book={
      series={London Math. Soc. Lecture Note Ser.},
      volume={461},
      publisher={Cambridge Univ. Press, Cambridge},
   },
   date={2020},
}

\bib{Dav85}{article}{
   author={Davies, E. B.},
   title={$L^1$ properties of second order elliptic operators},
   journal={Bull. London Math. Soc.},
   volume={17},
   date={1985},
   number={5},
   pages={417--436},
   issn={0024-6093},
   review={\MR{806008}},
   doi={10.1112/blms/17.5.417},
}

\bib{Dav90}{book}{
   author={Davies, E. B.},
   title={Heat kernels and spectral theory},
   series={Cambridge Tracts in Mathematics},
   volume={92},
   publisher={Cambridge University Press},
   place={Cambridge},
   date={1990},
   pages={x+197},
   isbn={0-521-40997-7},
   review={\MR{1103113 (92a:35035)}},
}

\bib{Dav92}{article}{
   author={Davies, E. B.},
   title={Heat kernel bounds, conservation of probability and the Feller
   property},
   note={Festschrift on the occasion of the 70th birthday of Shmuel Agmon},
   journal={J. Anal. Math.},
   volume={58},
   date={1992},
   pages={99--119},
   issn={0021-7670},
   review={\MR{1226938}},
   doi={10.1007/BF02790359},
}

\bib{doCar92}{book}{
   author={do Carmo, Manfredo Perdig\~{a}o},
   title={Riemannian geometry},
   series={Mathematics: Theory \& Applications},
   note={Translated from the second Portuguese edition by Francis Flaherty},
   publisher={Birkh\"{a}user Boston, Inc., Boston, MA},
   date={1992},
   pages={xiv+300},
   isbn={0-8176-3490-8},
   review={\MR{1138207}},
   doi={10.1007/978-1-4757-2201-7},
}

\bib{Dod83}{article}{
   author={Dodziuk, J{\'o}zef},
   title={Maximum principle for parabolic inequalities and the heat flow on
   open manifolds},
   journal={Indiana Univ. Math. J.},
   volume={32},
   date={1983},
   number={5},
   pages={703--716},
   issn={0022-2518},
   review={\MR{711862 (85e:58140)}},
   doi={10.1512/iumj.1983.32.32046},
}

\bib{Dod06}{article}{
   author={Dodziuk, J{\'o}zef},
   title={Elliptic operators on infinite graphs},
   conference={
      title={Analysis, geometry and topology of elliptic operators},
   },
   book={
      publisher={World Sci. Publ., Hackensack, NJ},
   },
   date={2006},
   pages={353--368},
   review={\MR{2246774 (2008f:58019)}},
}

\bib{DK88}{article}{
   author={Dodziuk, J{\'o}zef},
   author={Karp, Leon},
   title={Spectral and function theory for combinatorial Laplacians},
   conference={
      title={Geometry of random motion},
      address={Ithaca, N.Y.},
      date={1987},
   },
   book={
      series={Contemp. Math.},
      volume={73},
      publisher={Amer. Math. Soc.},
      place={Providence, RI},
   },
   date={1988},
   pages={25--40},
   review={\MR{954626 (89h:58220)}},
   doi={10.1090/conm/073/954626},
}

\bib{DM06}{article}{
   author={Dodziuk, J{\'o}zef},
   author={Mathai, Varghese},
   title={Kato's inequality and asymptotic spectral properties for discrete
   magnetic Laplacians},
   conference={
      title={The ubiquitous heat kernel},
   },
   book={
      series={Contemp. Math.},
      volume={398},
      publisher={Amer. Math. Soc.},
      place={Providence, RI},
   },
   date={2006},
   pages={69--81},
   review={\MR{2218014 (2007c:81054)}},
}

\bib{Fel54}{article}{
   author={Feller, William},
   title={Diffusion processes in one dimension},
   journal={Trans. Amer. Math. Soc.},
   volume={77},
   date={1954},
   pages={1--31},
   issn={0002-9947},
   review={\MR{63607}},
   doi={10.2307/1990677},
}

\bib{Fel57}{article}{
   author={Feller, William},
   title={On boundaries and lateral conditions for the Kolmogorov
   differential equations},
   journal={Ann. of Math. (2)},
   volume={65},
   date={1957},
   pages={527--570},
   issn={0003-486X},
   review={\MR{90928}},
   doi={10.2307/1970064},
}

\bib{Fol11}{article}{
   author={Folz, Matthew},
   title={Gaussian upper bounds for heat kernels of continuous time simple
   random walks},
   journal={Electron. J. Probab.},
   volume={16},
   date={2011},
   pages={no. 62, 1693--1722},
   issn={1083-6489},
   review={\MR{2835251}},
   doi={10.1214/EJP.v16-926},
}

\bib{Fol14}{article}{
   author={Folz, Matthew},
   title={Volume growth and stochastic completeness of graphs},
   journal={Trans. Amer. Math. Soc.},
   volume={366},
   date={2014},
   number={4},
   pages={2089--2119},
   issn={0002-9947},
   review={\MR{3152724}},
   doi={10.1090/S0002-9947-2013-05930-2},
}

\bib{FLW14}{article}{
   author={Frank, Rupert L.},
   author={Lenz, Daniel},
   author={Wingert, Daniel},
   title={Intrinsic metrics for non-local symmetric Dirichlet forms and
   applications to spectral theory},
   journal={J. Funct. Anal.},
   volume={266},
   date={2014},
   number={8},
   pages={4765--4808},
   issn={0022-1236},
   review={\MR{3177322}},
   doi={10.1016/j.jfa.2014.02.008},
}

\bib{FOT94}{book}{
   author={Fukushima, Masatoshi},
   author={{\=O}shima, Y{\=o}ichi},
   author={Takeda, Masayoshi},
   title={Dirichlet forms and symmetric Markov processes},
   series={de Gruyter Studies in Mathematics},
   volume={19},
   publisher={Walter de Gruyter \& Co.},
   place={Berlin},
   date={1994},
   pages={x+392},
   isbn={3-11-011626-X},
   review={\MR{1303354 (96f:60126)}},
}

\bib{Gaf59}{article}{
   author={Gaffney, Matthew P.},
   title={The conservation property of the heat equation on Riemannian
   manifolds},
   journal={Comm. Pure Appl. Math.},
   volume={12},
   date={1959},
   pages={1--11},
   issn={0010-3640},
   review={\MR{102097}},
   doi={10.1002/cpa.3160120102},
}

\bib{GHKLW15}{article}{
   author={Georgakopoulos, Agelos},
   author={Haeseler, Sebastian},
   author={Keller, Matthias},
   author={Lenz, Daniel},
   author={Wojciechowski, Rados\l aw K.},
   title={Graphs of finite measure},
   language={English, with English and French summaries},
   journal={J. Math. Pures Appl. (9)},
   volume={103},
   date={2015},
   number={5},
   pages={1093--1131},
   issn={0021-7824},
   review={\MR{3333051}},
   doi={10.1016/j.matpur.2014.10.006},
}

\bib{Gol14}{article}{
   author={Gol{\'e}nia, Sylvain},
   title={Hardy inequality and asymptotic eigenvalue distribution for
   discrete Laplacians},
   journal={J. Funct. Anal.},
   volume={266},
   date={2014},
   number={5},
   pages={2662--2688},
   issn={0022-1236},
   review={\MR{3158705}},
   doi={10.1016/j.jfa.2013.10.012},
}

\bib{Gri86}{article}{
   author={Grigor\cprime yan, A. A.},
   title={Stochastically complete manifolds},
   language={Russian},
   journal={Dokl. Akad. Nauk SSSR},
   volume={290},
   date={1986},
   number={3},
   pages={534--537},
   issn={0002-3264},
   review={\MR{860324}},
}

\bib{Gri99}{article}{
   author={Grigor{\cprime}yan, Alexander},
   title={Analytic and geometric background of recurrence and non-explosion
   of the Brownian motion on Riemannian manifolds},
   journal={Bull. Amer. Math. Soc. (N.S.)},
   volume={36},
   date={1999},
   number={2},
   pages={135--249},
   issn={0273-0979},
   review={\MR{1659871 (99k:58195)}},
   doi={10.1090/S0273-0979-99-00776-4},
}

\bib{Gri09}{book}{
   author={Grigor'yan, Alexander},
   title={Heat kernel and analysis on manifolds},
   series={AMS/IP Studies in Advanced Mathematics},
   volume={47},
   publisher={American Mathematical Society, Providence, RI; International
   Press, Boston, MA},
   date={2009},
   pages={xviii+482},
   isbn={978-0-8218-4935-4},
   review={\MR{2569498 (2011e:58041)}},
}

\bib{GHM12}{article}{
   author={Grigor'yan, Alexander},
   author={Huang, Xueping},
   author={Masamune, Jun},
   title={On stochastic completeness of jump processes},
   journal={Math. Z.},
   volume={271},
   date={2012},
   number={3-4},
   pages={1211--1239},
   issn={0025-5874},
   review={\MR{2945605}},
   doi={10.1007/s00209-011-0911-x},
}

\bib{HK11}{article}{
   author={Haeseler, Sebastian},
   author={Keller, Matthias},
   title={Generalized solutions and spectrum for Dirichlet forms on graphs},
   conference={
      title={Random walks, boundaries and spectra},
   },
   book={
      series={Progr. Probab.},
      volume={64},
      publisher={Birkh\"{a}user/Springer Basel AG, Basel},
   },
   date={2011},
   pages={181--199},
   review={\MR{3051699}},
}

\bib{HKLW12}{article}{
   author={Haeseler, Sebastian},
   author={Keller, Matthias},
   author={Lenz, Daniel},
   author={Wojciechowski, Rados{\l}aw},
   title={Laplacians on infinite graphs: Dirichlet and Neumann boundary
   conditions},
   journal={J. Spectr. Theory},
   volume={2},
   date={2012},
   number={4},
   pages={397--432},
   issn={1664-039X},
   review={\MR{2947294}},
}

\bib{Has60}{article}{
   author={Has\cprime minski\u{\i}, R. Z.},
   title={Ergodic properties of recurrent diffusion processes and
   stabilization of the solution of the Cauchy problem for parabolic
   equations},
   language={Russian, with English summary},
   journal={Teor. Verojatnost. i Primenen.},
   volume={5},
   date={1960},
   pages={196--214},
   issn={0040-361x},
   review={\MR{0133871}},
}

\bib{Hsu89}{article}{
   author={Hsu, Pei},
   title={Heat semigroup on a complete Riemannian manifold},
   journal={Ann. Probab.},
   volume={17},
   date={1989},
   number={3},
   pages={1248--1254},
   issn={0091-1798},
   review={\MR{1009455 (90j:58158)}},
}

\bib{HH}{article}{
   author={Hua, Bobo},
   author={Huang, Xueping},
   title={A survey on unbounded Laplacians and intrinsic metrics on graphs},
}  

\bib{HL17}{article}{
   author={Hua, Bobo},
   author={Lin, Yong},
   title={Stochastic completeness for graphs with curvature dimension
   conditions},
   journal={Adv. Math.},
   volume={306},
   date={2017},
   pages={279--302},
   issn={0001-8708},
   review={\MR{3581303}},
   doi={10.1016/j.aim.2016.10.022},
}

\bib{HM}{article}{
   author={Hua, Bobo},
   author={M{\"u}nch, Florentin},
   title={Ricci curvature on birth-death processes},
   eprint={arXiv:1712.01494 [math.DG]}
}

\bib{Hua11a}{article}{
   author={Huang, Xueping},
   title={Stochastic incompleteness for graphs and weak Omori-Yau maximum
   principle},
   journal={J. Math. Anal. Appl.},
   volume={379},
   date={2011},
   number={2},
   pages={764--782},
   issn={0022-247X},
   review={\MR{2784357 (2012c:60194)}},
   doi={10.1016/j.jmaa.2011.02.009},
}

\bib{Hua11b}{book}{
   author={Huang, Xueping},
   title={On stochastic completeness of weighted graphs},
   note={Thesis (Ph.D.)--Bielefeld University},
   date={2011},
   pages={115},
}

\bib{Hua12}{article}{
   author={Huang, Xueping},
   title={On uniqueness class for a heat equation on graphs},
   journal={J. Math. Anal. Appl.},
   volume={393},
   date={2012},
   number={2},
   pages={377--388},
   issn={0022-247X},
   review={\MR{2921681}},
   doi={10.1016/j.jmaa.2012.04.026},
}

\bib{Hua14a}{article}{
   author={Huang, Xueping},
   title={Escape rate of Markov chains on infinite graphs},
   journal={J. Theoret. Probab.},
   volume={27},
   date={2014},
   number={2},
   pages={634--682},
   issn={0894-9840},
   review={\MR{3195830}},
   doi={10.1007/s10959-012-0456-x},
}

\bib{Hua14b}{article}{
   author={Huang, Xueping},
   title={A note on the volume growth criterion for stochastic completeness
   of weighted graphs},
   journal={Potential Anal.},
   volume={40},
   date={2014},
   number={2},
   pages={117--142},
   issn={0926-2601},
   review={\MR{3152158}},
   doi={10.1007/s11118-013-9342-0},
}

\bib{HKMW13}{article}{
   author={Huang, Xueping},
   author={Keller, Matthias},
   author={Masamune, Jun},
   author={Wojciechowski, Rados{\l}aw K.},
   title={A note on self-adjoint extensions of the Laplacian on weighted
   graphs},
   journal={J. Funct. Anal.},
   volume={265},
   date={2013},
   number={8},
   pages={1556--1578},
   issn={0022-1236},
   review={\MR{3079229}},
   doi={10.1016/j.jfa.2013.06.004},
}

\bib{HKS}{article}{
   author={Huang, Xueping},
   author={Keller, Matthias},
   author={Schmidt, Marcel},
   title={On the uniqueness class, stochastic completeness and volume growth for graphs},
   journal={Trans. Amer. Math. Soc.},
   date={to appear},
   eprint={arXiv:1812.05386 [math.MG]}
}  

\bib{HS14}{article}{
   author={Huang, Xueping},
   author={Shiozawa, Yuichi},
   title={Upper escape rate of Markov chains on weighted graphs},
   journal={Stochastic Process. Appl.},
   volume={124},
   date={2014},
   number={1},
   pages={317--347},
   issn={0304-4149},
   review={\MR{3131296}},
   doi={10.1016/j.spa.2013.08.004},
}

\bib{Ichi82}{article}{
   author={Ichihara, Kanji},
   title={Curvature, geodesics and the Brownian motion on a Riemannian
   manifold. II. Explosion properties},
   journal={Nagoya Math. J.},
   volume={87},
   date={1982},
   pages={115--125},
   issn={0027-7630},
   review={\MR{676590}},
}

\bib{KLi}{article}{
   author={Karp, Leon},
   author={Li, Peter},
   title={The heat equation on complete Riemannian manifolds},
   date={unpublished manuscript}
   }

\bib{Kel15}{article}{
   author={Keller, Matthias},
   title={Intrinsic metrics on graphs: a survey},
   conference={
      title={Mathematical technology of networks},
   },
   book={
      series={Springer Proc. Math. Stat.},
      volume={128},
      publisher={Springer, Cham},
   },
   date={2015},
   pages={81--119},
   review={\MR{3375157}},
}

\bib{KL10}{article}{
   author={Keller, M.},
   author={Lenz, D.},
   title={Unbounded Laplacians on graphs: basic spectral properties and the
   heat equation},
   journal={Math. Model. Nat. Phenom.},
   volume={5},
   date={2010},
   number={4},
   pages={198--224},
   issn={0973-5348},
   review={\MR{2662456}},
   doi={10.1051/mmnp/20105409},
}

\bib{KL12}{article}{
   author={Keller, Matthias},
   author={Lenz, Daniel},
   title={Dirichlet forms and stochastic completeness of graphs and
   subgraphs},
   journal={J. Reine Angew. Math.},
   volume={666},
   date={2012},
   pages={189--223},
   issn={0075-4102},
   review={\MR{2920886}},
   doi={10.1515/CRELLE.2011.122},
}

\bib{KLW13}{article}{
   author={Keller, Matthias},
   author={Lenz, Daniel},
   author={Wojciechowski, Rados{\l}aw K.},
   title={Volume growth, spectrum and stochastic completeness of infinite
   graphs},
   journal={Math. Z.},
   volume={274},
   date={2013},
   number={3-4},
   pages={905--932},
   issn={0025-5874},
   review={\MR{3078252}},
   doi={10.1007/s00209-012-1101-1},
}

\bib{KLW}{collection}{
   title={Analysis and geometry on graphs and manifolds},
   series={London Mathematical Society Lecture Note Series},
   volume={461},
   editor={Keller, Matthias},
   editor={Lenz, Daniel},
   editor={Wojciechowski, Rados{\l}aw K.},
   publisher={Cambridge University Press, Cambridge},
   date={2020},
  isbn={978-1-108-713184},
}

\bib{KM19}{article}{
   author={Keller, Matthias},
   author={M\"{u}nch, Florentin},
   title={A new discrete Hopf-Rinow theorem},
   journal={Discrete Math.},
   volume={342},
   date={2019},
   number={9},
   pages={2751--2757},
   issn={0012-365X},
   review={\MR{3975035}},
   doi={10.1016/j.disc.2019.03.014},
}

\bib{KM}{article}{
   author={Keller, Matthias},
   author={M{\"u}nch, Florentin},
   title={Gradient estimates, Bakry-Emery Ricci curvature and ellipticity for unbounded graph Laplacians},
   eprint={arXiv:1807.10181 [math.DG]}
}  

\bib{Kh60}{article}{
   author={Khas{\cprime}minski{\u\i}, R. Z.},
   title={Ergodic properties of recurrent diffusion processes and
   stabilization of the solution of the Cauchy problem for parabolic
   equations},
   language={Russian, with English summary},
   journal={Teor. Verojatnost. i Primenen.},
   volume={5},
   date={1960},
   pages={196--214},
   issn={0040-361x},
   review={\MR{0133871 (24 \#A3695)}},
}

\bib{LSW}{article}{
   author={Lenz, Daniel},
   author={Schmidt, Marcel},
   author={Wirth, Melchior},
   title={Uniqueness of form extensions and domination of semigroups},
   eprint={arXiv:1608.06798 [math.FA]}
}  

\bib{LLY11}{article}{
   author={Lin, Yong},
   author={Lu, Linyuan},
   author={Yau, Shing-Tung},
   title={Ricci curvature of graphs},
   journal={Tohoku Math. J. (2)},
   volume={63},
   date={2011},
   number={4},
   pages={605--627},
   issn={0040-8735},
   review={\MR{2872958}},
   doi={10.2748/tmj/1325886283},
}

\bib{LY10}{article}{
   author={Lin, Yong},
   author={Yau, Shing-Tung},
   title={Ricci curvature and eigenvalue estimate on locally finite graphs},
   journal={Math. Res. Lett.},
   volume={17},
   date={2010},
   number={2},
   pages={343--356},
   issn={1073-2780},
   review={\MR{2644381 (2011e:05068)}},
}

\bib{LMP18}{article}{
   author={Liu, Shiping},
   author={M\"{u}nch, Florentin},
   author={Peyerimhoff, Norbert},
   title={Bakry-\'{E}mery curvature and diameter bounds on graphs},
   journal={Calc. Var. Partial Differential Equations},
   volume={57},
   date={2018},
   number={2},
   pages={Paper No. 67, 9},
   issn={0944-2669},
   review={\MR{3776357}},
   doi={10.1007/s00526-018-1334-x},
}

\bib{MV13}{article}{
   author={Mari, Luciano},
   author={Valtorta, Daniele},
   title={On the equivalence of stochastic completeness and Liouville and
   Khas'minskii conditions in linear and nonlinear settings},
   journal={Trans. Amer. Math. Soc.},
   volume={365},
   date={2013},
   number={9},
   pages={4699--4727},
   issn={0002-9947},
   review={\MR{3066769}},
   doi={10.1090/S0002-9947-2013-05765-0},
}

\bib{MS}{article}{
   author={Masamune, Jun},
   author={Schmidt, Marcel},
   title={A generalized conservation property for the heat semigroup on weighted manifolds},
   journal={Math. Ann.},
   date={to appear},
   eprint={arXiv:1810.07981 [math.FA]}
}  

\bib{MU11}{article}{
   author={Masamune, Jun},
   author={Uemura, Toshihiro},
   title={Conservation property of symmetric jump processes},
   language={English, with English and French summaries},
   journal={Ann. Inst. Henri Poincar\'e Probab. Stat.},
   volume={47},
   date={2011},
   number={3},
   pages={650--662},
   issn={0246-0203},
   review={\MR{2841069 (2012f:60291)}},
   doi={10.1214/09-AIHP368},
}

\bib{MUW12}{article}{
   author={Masamune, Jun},
   author={Uemura, Toshihiro},
   author={Wang, Jian},
   title={On the conservativeness and the recurrence of symmetric
   jump-diffusions},
   journal={J. Funct. Anal.},
   volume={263},
   date={2012},
   number={12},
   pages={3984--4008},
   issn={0022-1236},
   review={\MR{2990064}},
   doi={10.1016/j.jfa.2012.09.014},
}

\bib{M}{article}{
   author={M{\"u}nch, Florentin},
   title={Li-Yau inequality under CD(0,n) on graphs},
   eprint={arXiv:1909.10242 [math.DG]}
}

\bib{MW19}{article}{
   author={M\"{u}nch, Florentin},
   author={Wojciechowski, Rados\l aw K.},
   title={Ollivier Ricci curvature for general graph Laplacians: heat
   equation, Laplacian comparison, non-explosion and diameter bounds},
   journal={Adv. Math.},
   volume={356},
   date={2019},
   pages={106759, 45},
   issn={0001-8708},
   review={\MR{3998765}},
   doi={10.1016/j.aim.2019.106759},
}

\bib{Oll07}{article}{
   author={Ollivier, Yann},
   title={Ricci curvature of metric spaces},
   language={English, with English and French summaries},
   journal={C. R. Math. Acad. Sci. Paris},
   volume={345},
   date={2007},
   number={11},
   pages={643--646},
   issn={1631-073X},
   review={\MR{2371483}},
   doi={10.1016/j.crma.2007.10.041},
}

\bib{Oll09}{article}{
   author={Ollivier, Yann},
   title={Ricci curvature of Markov chains on metric spaces},
   journal={J. Funct. Anal.},
   volume={256},
   date={2009},
   number={3},
   pages={810--864},
   issn={0022-1236},
   review={\MR{2484937}},
   doi={10.1016/j.jfa.2008.11.001},
}

\bib{Om67}{article}{
   author={Omori, Hideki},
   title={Isometric immersions of Riemannian manifolds},
   journal={J. Math. Soc. Japan},
   volume={19},
   date={1967},
   pages={205--214},
   issn={0025-5645},
   review={\MR{215259}},
   doi={10.2969/jmsj/01920205},
}

\bib{PS12}{article}{
   author={Pigola, Stefano},
   author={Setti, Alberto G.},
   title={The Feller property on Riemannian manifolds},
   journal={J. Funct. Anal.},
   volume={262},
   date={2012},
   number={5},
   pages={2481--2515},
   issn={0022-1236},
   review={\MR{2876412}},
   doi={10.1016/j.jfa.2011.12.001},
}

\bib{PRS03}{article}{
   author={Pigola, Stefano},
   author={Rigoli, Marco},
   author={Setti, Alberto G.},
   title={A remark on the maximum principle and stochastic completeness},
   journal={Proc. Amer. Math. Soc.},
   volume={131},
   date={2003},
   number={4},
   pages={1283--1288},
   issn={0002-9939},
   review={\MR{1948121}},
   doi={10.1090/S0002-9939-02-06672-8},
}

\bib{Puch}{book}{
   author={Puchert, Simon},
   note={Masterarbeit--Jena University},
}

\bib{Reu57}{article}{
   author={Reuter, G. E. H.},
   title={Denumerable Markov processes and the associated contraction
   semigroups on $l$},
   journal={Acta Math.},
   volume={97},
   date={1957},
   pages={1--46},
   issn={0001-5962},
   review={\MR{102123}},
   doi={10.1007/BF02392391},
}

\bib{Schm17}{article}{
   author={Schmidt, Marcel},
   title={Global properties of Dirichlet forms on discrete spaces},
   journal={Dissertationes Math.},
   volume={522},
   date={2017},
   pages={43},
   issn={0012-3862},
   review={\MR{3649359}},
   doi={10.4064/dm738-7-2016},
}

\bib{Schm20}{article}{
   author={Schmidt, Marcel},
   title={On the existence and uniqueness of self-adjoint realizations of discrete (magnetic) Schr{\"o}dinger operators},
   conference={
      title={Analysis and geometry on graphs and manifolds},
   },
   book={
      series={London Math. Soc. Lecture Note Ser.},
      volume={461},
      publisher={Cambridge Univ. Press, Cambridge},
   },
   date={2020},
}

\bib{Schm96}{article}{
   author={Schmuckenschl\"{a}ger, Michael},
   title={Curvature of nonlocal Markov generators},
   conference={
      title={Convex geometric analysis},
      address={Berkeley, CA},
      date={1996},
   },
   book={
      series={Math. Sci. Res. Inst. Publ.},
      volume={34},
      publisher={Cambridge Univ. Press, Cambridge},
   },
   date={1999},
   pages={189--197},
   review={\MR{1665591}},
}

\bib{Stri83}{article}{
   author={Strichartz, Robert S.},
   title={Analysis of the Laplacian on the complete Riemannian manifold},
   journal={J. Funct. Anal.},
   volume={52},
   date={1983},
   number={1},
   pages={48--79},
   issn={0022-1236},
   review={\MR{705991 (84m:58138)}},
   doi={10.1016/0022-1236(83)90090-3},
}

\bib{Stu94}{article}{
   author={Sturm, Karl-Theodor},
   title={Analysis on local Dirichlet spaces. I. Recurrence,
   conservativeness and $L^p$-Liouville properties},
   journal={J. Reine Angew. Math.},
   volume={456},
   date={1994},
   pages={173--196},
   issn={0075-4102},
   review={\MR{1301456 (95i:31003)}},
   doi={10.1515/crll.1994.456.173},
}

\bib{Tak89}{article}{
   author={Takeda, Masayoshi},
   title={On a martingale method for symmetric diffusion processes and its
   applications},
   journal={Osaka J. Math.},
   volume={26},
   date={1989},
   number={3},
   pages={605--623},
   issn={0030-6126},
   review={\MR{1021434}},
}

\bib{Var83}{article}{
   author={Varopoulos, N. T.},
   title={Potential theory and diffusion on Riemannian manifolds},
   conference={
      title={Conference on harmonic analysis in honor of Antoni Zygmund,
      Vol. I, II},
      address={Chicago, Ill.},
      date={1981},
   },
   book={
      series={Wadsworth Math. Ser.},
      publisher={Wadsworth, Belmont, CA},
   },
   date={1983},
   pages={821--837},
   review={\MR{730112}},
}

\bib{Vill03}{book}{
   author={Villani, C\'{e}dric},
   title={Topics in optimal transportation},
   series={Graduate Studies in Mathematics},
   volume={58},
   publisher={American Mathematical Society, Providence, RI},
   date={2003},
   pages={xvi+370},
   isbn={0-8218-3312-X},
   review={\MR{1964483}},
   doi={10.1090/gsm/058},
}

\bib{Web10}{article}{
   author={Weber, Andreas},
   title={Analysis of the physical Laplacian and the heat flow on a locally
   finite graph},
   journal={J. Math. Anal. Appl.},
   volume={370},
   date={2010},
   number={1},
   pages={146--158},
   issn={0022-247X},
   review={\MR{2651136}},
   doi={10.1016/j.jmaa.2010.04.044},
}

\bib{Weid80}{book}{
   author={Weidmann, Joachim},
   title={Linear operators in Hilbert spaces},
   series={Graduate Texts in Mathematics},
   volume={68},
   note={Translated from the German by Joseph Sz\"ucs},
   publisher={Springer-Verlag, New York-Berlin},
   date={1980},
   pages={xiii+402},
   isbn={0-387-90427-1},
   review={\MR{566954 (81e:47001)}},
}

\bib{Woj08}{book}{
   author={Wojciechowski, Rados{\l}aw Krzysztof},
   title={Stochastic completeness of graphs},
   note={Thesis (Ph.D.)--City University of New York},
   publisher={ProQuest LLC, Ann Arbor, MI},
   date={2008},
   pages={87},
   isbn={978-0549-58579-4},
   review={\MR{2711706}},
}

\bib{Woj09}{article}{
   author={Wojciechowski, Rados{\l}aw K.},
   title={Heat kernel and essential spectrum of infinite graphs},
   journal={Indiana Univ. Math. J.},
   volume={58},
   date={2009},
   number={3},
   pages={1419--1441},
   issn={0022-2518},
   review={\MR{2542093 (2010k:35208)}},
   doi={10.1512/iumj.2009.58.3575},
}

\bib{Woj11}{article}{
   author={Wojciechowski, Rados{\l}aw Krzysztof},
   title={Stochastically incomplete manifolds and graphs},
   conference={
      title={Random walks, boundaries and spectra},
   },
   book={
      series={Progr. Probab.},
      volume={64},
      publisher={Birkh\"auser/Springer Basel AG, Basel},
   },
   date={2011},
   pages={163--179},
   review={\MR{3051698}},
}

\bib{Woj17}{article}{
   author={Wojciechowski, Rados\l aw K.},
   title={The Feller property for graphs},
   journal={Trans. Amer. Math. Soc.},
   volume={369},
   date={2017},
   number={6},
   pages={4415--4431},
   issn={0002-9947},
   review={\MR{3624415}},
   doi={10.1090/tran/6901},
}

\bib{Yau75}{article}{
   author={Yau, Shing Tung},
   title={Harmonic functions on complete Riemannian manifolds},
   journal={Comm. Pure Appl. Math.},
   volume={28},
   date={1975},
   pages={201--228},
   issn={0010-3640},
   review={\MR{431040}},
   doi={10.1002/cpa.3160280203},
}

\bib{Yau78}{article}{
   author={Yau, Shing Tung},
   title={On the heat kernel of a complete Riemannian manifold},
   journal={J. Math. Pures Appl. (9)},
   volume={57},
   date={1978},
   number={2},
   pages={191--201},
   issn={0021-7824},
   review={\MR{505904 (81b:58041)}},
}

\end{biblist}
\end{bibdiv}

\end{document}